\definecolor{labelkey}{rgb}{0,0.08,0.45}
\definecolor{refkey}{rgb}{0,0.6,0.0}
\definecolor{Brown}{rgb}{0.45,0.0,0.05}
\definecolor{lime}{rgb}{0.00,0.8,0.0}
\definecolor{lblue}{rgb}{0.5,0.5,0.99}
\definecolor{myblue}{rgb}{.9, .9, 1}
  \newcommand*\mybluebox[1]{%
    \colorbox{myblue}{\hspace{1em}#1\hspace{1em}}}
\newcommand{\sepp}{\setlength{\itemsep}{-2pt}}
\newcommand{\menge}[2]{\left\{{#1}~\big |~{#2}\right\}} 
\newcommand{\mmenge}[2]{\bigg\{{#1}~\bigg |~{#2}\bigg\}}
\newcommand{\ball}[2]{\operatorname{ball}({#1};{#2})}
\newcommand{\scal}[2]{\left\langle {#1},{#2} \right\rangle}
\newcommand{\To}{\ensuremath{\rightrightarrows}}
\newcommand{\ve}{\ensuremath{\varepsilon}}
\newcommand{\exi}{\ensuremath{\exists\,}}
\newcommand{\NN}{\ensuremath{\mathbb N}}
\newcommand{\nnn}{\ensuremath{{n\in{\mathbb N}}}}
\newcommand{\RR}{\ensuremath{\mathbb R}}
\newcommand{\RP}{\ensuremath{\mathbb{R}_+}}
\newcommand{\RPP}{\ensuremath{\mathbb{R}_{++}}}
\newcommand{\ba}{\ensuremath{\mathbf{a}}}
\newcommand{\bb}{\ensuremath{\mathbf{b}}}
\newcommand{\bx}{\ensuremath{\mathbf{x}}}
\newcommand{\bz}{\ensuremath{\mathbf{z}}}
\newcommand{\bA}{\ensuremath{{\mathbf{A}}}}
\newcommand{\bB}{\ensuremath{{\mathbf{B}}}}
\newcommand{\bT}{\ensuremath{{\mathbf{T}}}}
\newcommand{\bX}{\ensuremath{{\mathbf{X}}}}
\newcommand{\bd}{\ensuremath{\operatorname{bdry}}}
\newcommand{\inte}{\ensuremath{\operatorname{int}}}
\newcommand{\reli}{\ensuremath{\operatorname{ri}}}
\newcommand{\aff}{\ensuremath{\operatorname{aff}}}
\newcommand{\lspan}{\ensuremath{{\operatorname{span}}\,}}
\newcommand{\epi}{\ensuremath{\operatorname{epi}}}
\newcommand{\gra}{\ensuremath{\operatorname{gra}}}
\newcommand{\Fix}{\ensuremath{\operatorname{Fix}}}
\newcommand{\Id}{\ensuremath{\operatorname{Id}}}
\newtheorem{theorem}{Theorem}[section]
\newtheorem{lemma}[theorem]{Lemma}
\newtheorem{corollary}[theorem]{Corollary}
\newtheorem{proposition}[theorem]{Proposition}
\newtheorem{definition}[theorem]{Definition}
\theoremstyle{plain}{\theorembodyfont{\rmfamily}
}
\theoremstyle{plain}{\theorembodyfont{\rmfamily}
}
\theoremstyle{plain}{\theorembodyfont{\rmfamily}
}
\theoremstyle{plain}{\theorembodyfont{\rmfamily}
\newtheorem{example}[theorem]{Example}}
\newtheorem{fact}[theorem]{Fact}
\theoremstyle{plain}{\theorembodyfont{\rmfamily}
\newtheorem{remark}[theorem]{Remark}}
\newenvironment{proof}[1][Proof]{\par
	\vspace{-12pt}
	\trivlist
	\item[\hskip\labelsep\itshape #1.]\ignorespaces
	}{%
	\hfill\ensuremath{\blacksquare}\endtrivlist
}
\begin{document}

\title{On Slater's condition and finite convergence of the
Douglas--Rachford algorithm}

\author{
Heinz H.\ Bauschke\thanks{
Mathematics, University of British Columbia, Kelowna, B.C.\ V1V~1V7, Canada. 
E-mail: \texttt{heinz.bauschke@ubc.ca}.},~
Minh N.\ Dao\thanks{
Department of Mathematics and Informatics, Hanoi National University of Education, 136 Xuan Thuy, Hanoi, Vietnam,
and Mathematics, University of British Columbia, Kelowna, B.C.\ V1V~1V7, Canada.
E-mail: \texttt{minhdn@hnue.edu.vn}.},~
Dominikus Noll\thanks{
Institut de Math{\'e}matiques, Universit{\'e} de Toulouse, 118 route de Narbonne, 31062 Toulouse, France.
E-mail: \texttt{noll@mip.ups-tlse.fr}.}~~~and~
Hung M.\ Phan\thanks{
Department of Mathematical Sciences, University of Massachusetts Lowell,
265 Riverside St., Olney Hall 428, Lowell, MA 01854, USA. 
E-mail: \texttt{hung\_phan@uml.edu}.}
}

\date{April 27, 2015}

\maketitle

\begin{abstract} \noindent
The Douglas--Rachford algorithm is a classical and very
successful method for solving optimization and feasibility
problems. In this paper, we provide novel conditions sufficient for
finite convergence in the context of convex feasibility problems. 
Our analysis builds upon, and considerably extends, pioneering
work by Spingarn. Specifically, we obtain finite convergence in
the presence of Slater's condition in the affine-polyhedral and
in a hyperplanar-epigraphical case. Various examples illustrate
our results. Numerical experiments demonstrate the
competitiveness of the Douglas--Rachford algorithm for solving linear equations with a
positivity constraint when compared to the method of alternating
projections and the method of reflection-projection. 
\end{abstract}

{\small
\noindent
{\bfseries 2010 Mathematics Subject Classification:}
{Primary 47H09, 90C25; 
Secondary 47H05, 49M27, 65F10, 65K05, 65K10. 
}

\noindent {\bfseries Keywords:}
alternating projections, 
convex feasibility problem, 
convex set,
Douglas--Rachford algorithm, 
epigraph, 
finite convergence, 
method of reflection-projection, 
monotone operator,
partial inverse, 
polyhedral set, 
projector, 
Slater's condition. 
}

\section{Introduction}

Throughout this paper, we assume that 
\begin{empheq}[box=\mybluebox]{equation}
\text{$X$ is a finite-dimensional real Hilbert space}
\end{empheq}
with inner product $\scal{\cdot}{\cdot}$ and induced norm $\|\cdot\|$, and 
\begin{empheq}[box=\mybluebox]{equation}
\text{$A$ and $B$ are closed convex subsets of $X$ such that $A\cap B\neq\varnothing$}.
\end{empheq}
Consider the \emph{convex feasibility problem}
\begin{empheq}[box=\mybluebox]{equation}
\label{e:prob}
\text{find a point in $A\cap B$}
\end{empheq}
and assume that it is possible to evaluate the \emph{projectors} (nearest point mappings) 
$P_A$ and $P_B$ corresponding to $A$ and $B$, respectively. 
We denote the corresponding reflectors by
$R_A := 2P_A-\Id$ and $R_B := 2P_B-\Id$, respectively. 
\emph{Projection methods} combine the projectors and reflectors
in a suitable way to generate a sequence converging to a solution
of \eqref{e:prob} --- we refer the reader to 
\cite{BC11}, \cite{Ceg12}, and \cite{CZ97} and the references
therein for further information. 

One celebrated algorithm for solving \eqref{e:prob}
is the so-called \emph{Douglas--Rachford Algorithm (DRA)}
\cite{DR56}. The adaption of this algorithm to
optimization and feasibility is actually due to Lions and
Mercier was laid out beautifully in their landmark paper
\cite{LM79} (see also \cite{EB92}). 
The DRA is based on the Douglas--Rachford splitting operator, 
\begin{empheq}[box=\mybluebox]{equation}
T :=\Id -P_A +P_BR_A,
\end{empheq}
which is used to generate a sequence $(z_n)_\nnn$ with 
starting point $z_0\in X$ via
\begin{empheq}[box=\mybluebox]{equation}
(\forall\nnn) \quad z_{n+1} := Tz_n.
\end{empheq}
Then the ``governing sequence'' $(z_n)_\nnn$ converges to a point $z\in \Fix T$,
and, more importantly, the ``shadow sequence'' $(P_Az_n)_\nnn$
converges to $P_Az$ which is a solution of \eqref{e:prob}. 

An important question concerns the speed of convergence of the
sequence $(P_Az_n)_\nnn$. 
Linear convergence was more clearly understood recently,
see \cite{HL13}, \cite{Pha14}, and \cite{BNP15}. 

\emph{The aim of this paper is to provide \emph{verifiable} conditions
sufficient for \emph{finite} convergence.}

Our two main results reveal that \emph{Slater's condition}, i.e.,
\begin{equation}
A\cap \inte B \neq\varnothing
\end{equation}
plays a key role and guarantees finite convergence when
\textbf{(MR1)} $A$ is an affine subspace and $B$ is a polyhedron
(Theorem~\ref{t:Bpoly}); or when
\textbf{(MR2)} $A$ is a certain hyperplane and $B$ is an epigraph 
(Theorem~\ref{t:epi}). 
Examples illustrate that these results are applicable in
situations where previously known conditions sufficient for
finite convergence fail.
When specialized to a product space setting, we derive
a finite convergence result due to Spingarn \cite{Spi85} for his
method of partial inverses \cite{Spi83}. Indeed, the proof of
Theorem~\ref{t:Bpoly} follows his pioneering work, but, at the same
time, we simplify his proofs and strengthen the conclusions.
These sharpenings allow us to obtain finite-convergence results
for solving linear equations with a positivity constraint. 
Numerical experiments support the competitiveness of the DRA for
solving \eqref{e:prob}. 

\subsection*{Organization of the paper}
The paper is organized as follows.
In Section~\ref{s:aux}, we present
several auxiliary results which make the eventual proofs
of the main results more structured and transparent. 
Section~\ref{s:main1} contains the first main result
\textbf{(MR1)}.
Applications using the product space set up, a comparison
with Spingarn's work, and numerical experiments are provided in Section~\ref{s:four}. 
The final Section~\ref{s:hypepi} concerns the second main result
\textbf{(MR2)}. 

\subsection*{Notation}
The notation employed is standard and follows largely \cite{BC11}.
The real numbers are $\RR$, and the nonnegative integers are $\NN$.
Further, $\RP := \menge{x \in \RR}{x \geq 0}$ and $\RPP := \menge{x \in \RR}{x >0}$.
Let $C$ be a subset of $X$. 
Then the closure of $C$ is $\overline C$, the interior of $C$ is $\inte C$, the boundary of $C$ is $\bd C$,
and the smallest affine and linear subspaces containing $C$ are,
respectively, $\aff C$ and $\lspan C$. 
The relative interior of $C$, $\reli C$, is the interior of $C$ relative to $\aff C$.
The orthogonal complement of $C$ is $C^\perp :=\menge{y \in X}{(\forall x\in C)\; \scal{x}{y} =0}$,
and the dual cone of $C$ is $C^\oplus :=\menge{y \in X}{(\forall x \in C)\; \scal{x}{y} \geq 0}$.
The normal cone operator of $C$ is denoted by $N_C$, i.e.,
$N_C(x) =\menge{y \in X}{(\forall c \in C)\; \scal{y}{c-x} \leq 0}$ if $x \in C$, and $N_C(x) =\varnothing$ otherwise.
If $x \in X$ and $\rho \in \RPP$, 
then $\ball{x}{\rho} :=\menge{y \in X}{\|x -y\| \leq \rho}$ 
is the closed ball centered at $x$ with radius $\rho$.

\section{Auxiliary results}
\label{s:aux}
In this section, we collect several auxiliary results that will be useful in the sequel. 

\subsection{Convex sets}

\begin{lemma}
\label{l:Ncone}
Let $C$ be a nonempty closed convex subset of $X$, let $x \in X$, 
and let $y \in C$. 
Then 
$x -P_Cx \in N_C(y)$ $\Leftrightarrow$ $\scal{x -P_Cx}{y -P_Cx} =0$.
\end{lemma}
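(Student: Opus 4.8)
The plan is to reduce everything to the well-known obtuse-angle (variational) characterization of the projector onto a nonempty closed convex set: for every $x\in X$ and every $c\in C$ one has $\scal{x-P_Cx}{c-P_Cx}\leq 0$, equivalently $x-P_Cx\in N_C(P_Cx)$. Both implications of the lemma then follow by feeding suitable points into this inequality and into the definition of $N_C(y)$.

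For the forward implication, suppose $x-P_Cx\in N_C(y)$. Evaluating the defining inequality of $N_C(y)$ at the admissible point $c=P_Cx\in C$ gives $\scal{x-P_Cx}{P_Cx-y}\leq 0$, i.e., $\scal{x-P_Cx}{y-P_Cx}\geq 0$. On the other hand, the obtuse-angle inequality applied with $c=y\in C$ gives $\scal{x-P_Cx}{y-P_Cx}\leq 0$. Together these force $\scal{x-P_Cx}{y-P_Cx}=0$.

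For the converse, suppose $\scal{x-P_Cx}{y-P_Cx}=0$. Given an arbitrary $c\in C$, write $c-y=(c-P_Cx)+(P_Cx-y)$ and expand the inner product to obtain $\scal{x-P_Cx}{c-y}=\scal{x-P_Cx}{c-P_Cx}-\scal{x-P_Cx}{y-P_Cx}=\scal{x-P_Cx}{c-P_Cx}\leq 0$, the last step being the obtuse-angle inequality. Since $y\in C$ by hypothesis, this is precisely the statement $x-P_Cx\in N_C(y)$. There is no genuine obstacle here; the only thing to keep track of is which point is inserted into which inequality, and the whole argument is essentially a two-line computation once the characterization of $P_C$ is in hand.
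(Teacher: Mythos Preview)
Your proof is correct and follows essentially the same route as the paper: both directions rely on the obtuse-angle characterization $\scal{x-P_Cx}{c-P_Cx}\leq 0$ together with the defining inequality of $N_C(y)$ evaluated at $P_Cx$, and the converse in each case proceeds by the same decomposition $c-y=(c-P_Cx)+(P_Cx-y)$.
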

\begin{proof}
Because 
$y\in C$ and $x-P_Cx\in N_C(P_Cx)$, we have $\scal{x-P_Cx}{y-P_Cx}\leq 0$. 
``$\Rightarrow$'':
From $x-P_Cx\in N_C(y)$ and $P_Cx \in C$,
we have $\scal{x-P_Cx}{P_Cx-y}\leq 0$. 
Thus $\scal{x-P_Cx}{P_Cx-y}=0$. 
``$\Leftarrow$'':
We have $(\forall c\in C)$ 
$\scal{x-P_Cx}{c-y} =
\scal{x-P_Cx}{c-P_Cx}+\scal{x-P_Cx}{P_Cx-y} =
\scal{x-P_Cx}{c-P_Cx} \leq 0$.
Hence $x-P_Cx\in N_C(y)$. 
\end{proof}

\begin{lemma}
\label{l:int}
Let $C$ be a nonempty convex subset of $X$. Then 
$\inte C \neq\varnothing $
$\Leftrightarrow$
$0 \in \inte(C-C)$.
\end{lemma}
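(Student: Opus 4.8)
\emph{Proof proposal.} The plan is to establish the two implications separately, with the bulk of the work in the converse. For the forward direction, suppose $\inte C\neq\varnothing$ and pick $c\in\inte C$ together with $\rho\in\RPP$ such that $\ball{c}{\rho}\subseteq C$. Then for every $h$ with $\|h\|\le\rho$ we have $c+h\in C$, so $h=(c+h)-c\in C-C$; hence $\ball{0}{\rho}\subseteq C-C$ and $0\in\inte(C-C)$. This half requires no convexity at all.

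For the converse, the first point to record is that $C-C=C+(-C)$ is convex (a sum of convex sets) and contains $0$ (as $C\neq\varnothing$). Now I would argue that $0\in\inte(C-C)$ forces $\aff C=X$: since $C-C$ has nonempty interior, $\aff(C-C)=X$; on the other hand, fixing $c_0\in C$, the set $\aff C-c_0$ is a linear subspace $L$ (the parallel subspace $\parsp C$), so $C-C\subseteq\aff C-\aff C=L-L=L$, and since $L$ is affine, $X=\aff(C-C)\subseteq L$. Thus $L=X$ and $\aff C=c_0+L=X$. Finally, I would invoke the standard fact from finite-dimensional convex analysis (see \cite{BC11}) that a nonempty convex subset of $X$ whose affine hull equals $X$ has nonempty interior --- equivalently, that $\reli C\neq\varnothing$ always holds and $\reli C=\inte C$ once $\aff C=X$.

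The only non-routine step is this converse direction, and it hinges entirely on finite-dimensionality (for both $\reli C\neq\varnothing$ and the passage from $\aff C=X$ to $\inte C\neq\varnothing$). If one prefers a self-contained argument that does not cite relative-interior theory, there is a direct construction: choose $\rho\in\RPP$ with $\ball{0}{\rho}\subseteq C-C$, fix an orthonormal basis $e_1,\dots,e_d$ of $X$, and pick $u_i,v_i\in C$ with $u_i-v_i=\rho e_i$. Then the barycenter $w:=\tfrac1{2d}\sum_{i=1}^d(u_i+v_i)$ lies in $C$, and for any $h=\sum_i h_ie_i$ with $\|h\|\le\rho/(2d)$ one has $w+h=\sum_{i=1}^d\big[(\tfrac1{2d}+\tfrac{h_i}{\rho})u_i+(\tfrac1{2d}-\tfrac{h_i}{\rho})v_i\big]$, a convex combination of points of $C$ since all coefficients are nonnegative and sum to $1$; hence $\ball{w}{\rho/(2d)}\subseteq C$, i.e.\ $w\in\inte C$. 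I expect the write-up to follow whichever of these two routes is shorter in the paper's conventions, with the affine-hull version being the crisp one and the barycenter version the elementary fallback.
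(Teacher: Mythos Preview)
Your proposal is correct and follows essentially the same route as the paper: the forward implication is immediate, and for the converse both you and the paper argue that $0\in\inte(C-C)$ forces $\aff C=X$ (the paper translates so that $0\in\reli C$ and then observes $C-C\subseteq\lspan C$, which is your $L$), after which finite-dimensional relative-interior theory gives $\inte C=\reli C\neq\varnothing$. Your barycenter alternative is a pleasant self-contained extra that the paper does not include.
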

\begin{proof}
``$\Rightarrow$'': Clear. 
''$\Leftarrow$'': 
By \cite[Theorem~6.2]{Roc70}, $\reli C \neq 0$. 
After translating the set if necessary, we assume that 
$0 \in \reli C$. 
Then $0 \in C$, and so $Y :=\aff C =\lspan C$. 
Since $0 \in \inte(C -C)\subseteq \inte(\lspan C)=\inte Y$, 
this gives $\inte Y \neq\varnothing$ and thus 
$Y =X$. In turn, $\inte C =\reli C \neq\varnothing$.
\end{proof}

\subsection{Cones} 

\begin{lemma}
\label{l:cvxcone}
Let $K$ be a nonempty convex cone in $X$. 
Then there exists $v \in \reli K \cap \reli K^\oplus$ such that
\begin{equation}
(\forall x \in \overline K\smallsetminus(\overline K\cap(-\overline K))) 
\quad \scal{v}{x} >0.
\end{equation}
\end{lemma}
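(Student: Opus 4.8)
The plan is to reduce the statement to a separation/interior argument in the subspace $Y := \overline{K} - \overline{K} = \lspan \overline K = \aff \overline K$. Working inside $Y$, the set $\overline K$ is a closed convex cone with nonempty relative interior (its interior relative to $Y$), and its lineality space is $L := \overline K \cap (-\overline K)$, a linear subspace of $Y$. The quotient $Y/L$ (equivalently, $L^\perp \cap Y$) then carries a closed convex cone $\overline K / L$ that is \emph{pointed}, i.e.\ contains no line. The key classical fact I would invoke is that a pointed closed convex cone in a finite-dimensional space has a dual cone with nonempty interior, and more precisely that there is a vector strictly positive on the cone minus its apex. Pulling such a vector back to $Y$ and recording that it lies in $\reli K^\oplus$ (computed in $Y$) gives the desired $v$; strict positivity on $\overline K \smallsetminus L$ is exactly what pointedness of the quotient buys us.

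First I would dispose of trivial cases: if $K = \{0\}$ then $\overline K \smallsetminus (\overline K \cap (-\overline K)) = \varnothing$ and any $v$ works (take $v=0$, which lies in $\reli K \cap \reli K^\oplus = \{0\}$). So assume $K \neq \{0\}$. Next, after noting $\overline K$ is a closed convex cone and replacing $X$ by $Y = \lspan \overline K$, I may assume $\inte \overline K \neq \varnothing$ by Lemma~\ref{l:int} applied to $\overline K$ (since $\overline K - \overline K = Y$). Then I would pick any $w \in \reli K$; by a standard fact $\reli K \subseteq \inte \overline K$, and $w$ generates the lineality decomposition. The construction of $v$: on the pointed cone $\overline K / L$ in the Euclidean space $Y \cap L^\perp$, choose a unit vector $e$ in the interior of $(\overline K / L)^\oplus$ — this interior is nonempty precisely because the cone is pointed — and set $v$ to be the image of $e$ under the inclusion $Y \cap L^\perp \hookrightarrow X$. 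Then $v \in (\overline K)^\oplus$ automatically, and since $L = (\overline K)^\oplus{}^{\!\perp} \cap (\overline K)$-type duality relations hold, $v$ in fact lies in $\reli (\overline K)^\oplus$; one checks $\reli K^\oplus = \reli \overline K^\oplus$ since $K^\oplus = \overline K^\oplus = (\overline{\conv} K)^\oplus$. Finally I would verify that $v \in \reli K$: since $v$ is strictly positive on $\overline K \smallsetminus L$ and lies in $\overline K^{\oplus\oplus} = \overline K$, a short argument using $w \in \reli K$ and convex combination $(1-t)w + tv$ places $v$ in $\reli K$ — alternatively, observe that by symmetry of the roles of $K$ and $K^\oplus$ (both are closed convex cones with $K^{\oplus\oplus} = \overline K$, $\reli K$ relative-interior), the same construction applied to $K^\oplus$ produces a vector in $\reli K^\oplus \cap \reli K$, and one shows these two constructions can be made to coincide.

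For the strict inequality: let $x \in \overline K \smallsetminus L$. Its image $\bar x$ in $\overline K / L$ is nonzero, hence lies in the pointed cone but not at its apex; since $e$ is in the \emph{interior} of the dual of that pointed cone, $\scal{e}{\bar x} > 0$, and this pairing equals $\scal{v}{x}$ because $v \perp L$. This is the crux and the only place pointedness is essential.

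The main obstacle I anticipate is the bookkeeping that $v$ lands simultaneously in $\reli K$ \emph{and} $\reli K^\oplus$: strict positivity on $\overline K \smallsetminus L$ handles the $K^\oplus$ side cleanly, but membership in $\reli K$ requires either the symmetry trick (engineering the two constructions to agree) or a direct verification that the chosen $e$, viewed back in $Y$, is a relative-interior point of $\overline K$ itself. I expect the cleanest route is: run the construction once to get $v_1 \in \reli K^\oplus$ strictly positive on $\overline K \smallsetminus L$, run it on $K^\oplus$ to get $v_2 \in \reli K$ strictly positive on $\overline{K^\oplus} \smallsetminus L'$ where $L' = \overline{K^\oplus} \cap (-\overline{K^\oplus})$; then observe $L' = L$ in the relevant subspace, note $\scal{v_1}{v_2} > 0$ forces both to avoid each other's lineality spaces, and finally take $v := v_1 + v_2$ — or show $v_1$ already works by checking $\scal{v_1}{\cdot}$ is positive on $\overline{K^\oplus}\smallsetminus L$, which makes $v_1 \in (\overline{K^\oplus})^{\oplus} \cap \inte(\cdots) = \overline K \cap (\text{relative interior})= \reli K$. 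Keeping the ambient-space reductions straight (we pass to $Y$, then to $Y \cap L^\perp$) is the part most likely to need care.
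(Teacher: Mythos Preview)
The paper's proof is a two-line citation: it invokes \cite[Lemma~2]{Spi85} to obtain some $v \in \reli K \cap \reli K^\oplus$, and then \cite[Lemma~3]{Spi85} (applied to the closed cone $\overline K$, using $\reli K^\oplus = \reli \overline K^\oplus$) to deduce the strict positivity. Your proposal instead attempts a self-contained argument, and your reduction steps --- passing to $Y = \lspan \overline K$ and quotienting by the lineality $L = \overline K \cap (-\overline K)$ to reach a pointed cone --- are sound. The part that correctly goes through is that your $e$, chosen in the interior of the dual of the pointed quotient, lands in $\reli K^\oplus$ and is strictly positive on $\overline K \smallsetminus L$.

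The genuine gap is exactly where you flag it: you do not establish $v \in \reli K$, and none of your proposed fixes work as stated. The sum $v_1 + v_2$ fails because $v_1 \in \reli K^\oplus$ need not lie in $\overline K$ at all (e.g.\ for $K = \{(x,y):y\geq 2|x|\}$ one has $K^\oplus = \{(x,y):y\geq |x|/2\} \not\subseteq K$), so adding it to $v_2 \in \reli K$ can push you out of $K$. Your assertion that ``$L' = L$ in the relevant subspace'' is also incorrect: after passing to $Y$, the lineality $L'$ of $K^\oplus$ becomes $\{0\}$ while $L$ may remain nontrivial. And nothing in your construction forces $v_1$ to be strictly positive on $K^\oplus \smallsetminus L'$, so the ``$v_1$ already works'' route is unjustified. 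A clean repair, staying within your framework: after reducing to the pointed full-dimensional cone $K_Z := \overline K / L$ in $Z := Y \cap L^\perp$, a one-line separation argument shows $(\inte_Z K_Z) \cap (K_Z)^\oplus_Z \neq \{0\}$ (any separating $h$ would lie in $(K_Z)^\oplus_Z \cap (-K_Z)$, forcing $-\|h\|^2 = \scal{h}{-h} \geq 0$); by symmetry the same holds with the roles swapped, and \emph{now} the sum of a point from each intersection lies in $(\inte_Z K_Z)\cap \inte_Z(K_Z)^\oplus_Z$, which lifts back to $\reli K \cap \reli K^\oplus$.
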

\begin{proof}
By \cite[Lemma~2]{Spi85}, there exists $v \in \reli K \cap \reli K^\oplus$.
Then $v \in \reli K^\oplus =\reli \overline K^\oplus$. 
Noting that $\overline K$ is a closed convex cone and 
using \cite[Lemma~3]{Spi85}, we complete the proof.
\end{proof}

\begin{lemma}
\label{l:plfun}
Let $(z_n)_\nnn$ be a sequence in $X$, 
and let $f \colon X \to \RR$ be linear. 
Assume that $z_n \to z \in X$, and that
\begin{equation}
\label{e:decrease}
(\forall\nnn)\quad f(z_n) >f(z_{n+1}).
\end{equation}
Then there exist $n_0 \in \NN\smallsetminus \{0\}$ and 
$(\mu_1, \dots, \mu_{n_0}) \in \RP^{n_0}$ such that 
$\sum_{k=1}^{n_0}\mu_k=1$ and 
\begin{equation}
\label{e:pscal}
\scal{\sum_{k=1}^{n_0} \mu_k(z_{k-1} -z_k)}{\sum_{k=1}^{n_0} \mu_k(z_k -z)} >0.
\end{equation}
\end{lemma}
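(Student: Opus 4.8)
The plan is to recast \eqref{e:pscal} and then argue by reduction on $\dim X$. Write $f=\scal{g}{\cdot}$ for the unique $g\in X$ (necessarily $g\neq 0$), and set $e_k:=z_k-z$, $u_k:=z_{k-1}-z_k$, so that $e_k\to 0$, $u_k=e_{k-1}-e_k$, and — because $f(z_n)$ decreases strictly to $f(z)$ — we have $f(e_k)>0$ and $f(u_k)=f(z_{k-1})-f(z_k)>0$ for every $k\ge 1$. For a finite convex combination $\mu=(\mu_1,\dots,\mu_{n_0})$ put $\bar u:=\sum_k\mu_k u_k$ and $\bar e:=\sum_k\mu_k e_k=\sum_k\mu_k(z_k-z)$; then \eqref{e:pscal} is exactly $\scal{\bar u}{\bar e}>0$, and $f(\bar u)>0$, $f(\bar e)>0$ for every such $\mu$.

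Decompose each $x\in X$ as $x=x_\parallel+x_\perp$ with $x_\parallel=\tfrac{f(x)}{\|g\|^2}g$ and $x_\perp\perp g$; then $\scal{\bar u}{\bar e}=\tfrac{f(\bar u)f(\bar e)}{\|g\|^2}+\scal{\bar u_\perp}{\bar e_\perp}$, and the first summand is always strictly positive. So it is enough to find $\mu$ with $\scal{\bar u_\perp}{\bar e_\perp}\ge 0$. The key special case: if $0\in\conv\{(e_k)_\perp:k\ge 1\}$, pick a finite convex $\mu$ with $\sum_k\mu_k(e_k)_\perp=0$; for that $\mu$ one has $\bar e_\perp=0$, hence $\bar e=\bar e_\parallel$ and $\scal{\bar u}{\bar e}=\tfrac{f(\bar u)f(\bar e)}{\|g\|^2}>0$, proving the lemma.

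It remains to treat the case $0\notin\conv\{(e_k)_\perp:k\ge 1\}$. Since $(e_k)_\perp\to 0$, the origin still lies in the closure of this convex subset of $g^\perp$, so there is a supporting functional $\ell\in g^\perp\smallsetminus\{0\}$ with $\scal{\ell}{e_k}=\scal{\ell}{(e_k)_\perp}\ge 0$ for all $k\ge 1$ (and $\scal{\ell}{e_k}\to 0$). If $\ell$ happens to be orthogonal to every $e_k$ — equivalently $\{g\}\cup\{e_k\}_{k\ge1}$ does not span $X$ — I would pass to the proper subspace $Y:=\ell^\perp$, which contains $g$ and every $e_k$, replace $z_0$ by $P_{z+Y}z_0$ (this leaves every relevant $f$-value unchanged and keeps the sequence convergent to $z$ with $f$ strictly decreasing), and apply the induction hypothesis in $Y$ — the base case $\dim X=1$ being immediate, since then the $u_k$ and $e_k$ all have the same sign; a short computation shows that, because $\bar e\in Y$, a convex combination witnessing \eqref{e:pscal} for the reduced data also witnesses it for the original data. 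The genuine obstacle I foresee is the remaining sub-case, where $\scal{\ell}{e_k}\ge 0$ for all $k$ but not identically $0$: here one must exploit that $\scal{\ell}{e_k}\downarrow 0$ through nonnegative values — so $\scal{\ell}{u_k}=\scal{\ell}{e_{k-1}}-\scal{\ell}{e_k}\ge 0$ for infinitely many $k$ — together with the working hypothesis that $\scal{\bar u}{\bar e}\le 0$ for every $\mu$ (equivalently, that $\scal{a}{b}\le 0$ on the closed convex cone in $X\times X$ generated by all $(u_k,e_k)$), in order to force a contradiction; I expect the cleanest route is to perturb $g$ to $g+t\ell$ over a carefully chosen block of consecutive indices, or a compactness argument along a subsequence of the normalized pairs $(u_k,e_k)/\|(u_k,e_k)\|$, in the spirit of Lemma~\ref{l:cvxcone}.
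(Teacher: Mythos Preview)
Your proposal has a genuine gap: the sub-case you yourself flag as ``the genuine obstacle'' --- where $0\notin\conv\{(e_k)_\perp\}$ and the separating functional $\ell$ is not orthogonal to every $e_k$ --- is never resolved, only speculated about. Perturbing $g$ to $g+t\ell$ or passing to a subsequence of normalized pairs does not obviously close it, and your induction on $\dim X$ fires only in the degenerate situation where $\{g\}\cup\{e_k\}_{k\ge1}$ fails to span. Since that hard sub-case is the generic one (e.g.\ $X=\RR^2$, $g=(1,0)$, $z_k=((k+1)^{-1},(k+1)^{-2})$, $z=0$), the argument as written does not prove the lemma.

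The paper's proof avoids your case split entirely by exploiting a structural relation you are not using: because $e_n=\sum_{k\ge n+1}u_k$ (a telescoping series, convergent since $e_k\to 0$), every $e_n$ lies in the \emph{closure} $\overline{K}$ of the convex cone $K$ generated by $\{u_k\}_{k\ge1}$. Since $f(u_k)>0$ for all $k$, one has $\overline{K}\cap(-\overline{K})\subseteq\ker f$; and since $f(e_n)>0$, each $e_n$ lies in $\overline{K}\smallsetminus(\overline{K}\cap(-\overline{K}))$. Lemma~\ref{l:cvxcone} then produces $v\in\reli K\cap\reli K^\oplus$ with $\scal{v}{e_n}>0$ for every $n$. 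Writing $v=\sum_{k=1}^{n_0}\mu_k u_k$ (a finite nonnegative combination, normalizable to $\sum_k\mu_k=1$ since $v\neq0$) and taking $\bar e=\sum_k\mu_k e_k$ gives $\scal{\bar u}{\bar e}=\scal{v}{\bar e}=\sum_k\mu_k\scal{v}{e_k}>0$. The orthogonal decomposition along $g$ and the separation argument in $g^\perp$ are simply unnecessary once you recognize that the $e_k$ already live in the cone generated by the $u_k$.
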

\begin{proof}
Introducing
\begin{equation}
(\forall n \in \NN\smallsetminus \{0\})\quad y_n :=z_{n-1} -z_n,
\end{equation}
we get
\begin{equation}
(\forall n \in \NN\smallsetminus \{0\})\quad f(y_n) >0, \quad\text{and so}\quad y_n \neq 0.
\end{equation} 
Let $K$ be the convex cone generated by 
$\{y_n\}_{n \in \NN\smallsetminus \{0\}}$. 
We see that
\begin{equation}
(\forall x \in K\smallsetminus\{0\})\quad f(x) >0, 
\quad\text{and}\quad 
(\forall x \in -K\smallsetminus\{0\})\quad f(x) <0.
\end{equation}
Therefore, 
\begin{equation}
\label{e:linC}
\overline K\cap (-\overline K) \subseteq \menge{x \in X}{f(x) =0}. 
\end{equation}
Setting 
\begin{equation}
(\forall n \in \NN\smallsetminus \{0\})\quad w_n: =z_n -z,
\end{equation}
we immediately have $w_n \to 0$, and so 
\begin{equation}
\label{e:znC}
(\forall n \in \NN\smallsetminus \{0\})\quad w_n =z_n -z =\sum_{k=n+1}^\infty (z_{k-1} -z_k) =\sum_{k=n+1}^\infty y_k \in \overline K.
\end{equation}
From \eqref{e:decrease} we get
\begin{equation}
(\forall n \in \NN\smallsetminus \{0\})\quad f(w_n) >f(w_{n+1}). 
\end{equation}
Moreover, $f(w_n) \to f(0) =0$, hence  
\begin{equation}
(\forall n \in \NN\smallsetminus \{0\})\quad f(w_n) >0.
\end{equation}
Together with \eqref{e:linC} and \eqref{e:znC}, this gives 
\begin{equation}
(\forall n \in \NN\smallsetminus \{0\})\quad w_n \in \overline K\smallsetminus(\overline K\cap(-\overline K)).
\end{equation}
By Lemma~\ref{l:cvxcone}, 
there exists $v \in \reli K\cap \reli K^\oplus$ such that
\begin{equation}
\label{e:vwn}
(\forall n \in \NN\smallsetminus \{0\})\quad \scal{v}{w_n} >0.
\end{equation}
Then we must have $v \neq 0$. Since $v \in K$, after scaling if necessary,
there exist $n_0\in\NN\smallsetminus\{0\}$ and 
$(\mu_1,\ldots,\mu_{n_0})\in\RP^{n_0}$ such that 
\begin{equation}
v =\sum_{k=1}^{n_0} \mu_k y_k, \quad \text{and}\quad \sum_{k=1}^{n_0} \mu_k =1.
\end{equation} 
This combined with \eqref{e:vwn} implies
\begin{equation}
\scal{\sum_{k=1}^{n_0} \mu_k y_k}{\sum_{k=1}^{n_0} \mu_k w_k} >0,
\end{equation}
and so \eqref{e:pscal} holds.
\end{proof}

\begin{lemma}
\label{l:pointed}
Let $K$ be a nonempty pointed\footnote{Recall that a cone $K$ is
pointed if $K\cap (-K)\subseteq \{0\}$.} convex cone in $X$. 
Then the following hold:
\begin{enumerate}
\item 
\label{l:pointed1}
Let $m\in\NN\smallsetminus\{0\}$ and let $(x_1,\ldots,x_m)\in
K^m$. Then 
$x_1 +\cdots +x_m =0$
$\Leftrightarrow$ $x_1 =\cdots =x_m =0$.
\item
\label{l:pointed2}
If $K$ is closed and $L\colon X\to X$ is linear such that 
\begin{equation}
\label{e:kerL}
\ker L\cap K =\{0\}, 
\end{equation}
then $L(K)$ is a nonempty pointed closed convex cone.
\end{enumerate}
\end{lemma}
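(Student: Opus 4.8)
For part~\ref{l:pointed1}, the plan is to prove the nontrivial implication ``$\Rightarrow$'' by induction on $m$, or more directly by a symmetry argument. Suppose $x_1+\cdots+x_m=0$ with each $x_i\in K$. Then $x_1 = -(x_2+\cdots+x_m)$. Since $K$ is a convex cone, $x_2+\cdots+x_m\in K$, so $x_1\in K\cap(-K)\subseteq\{0\}$, forcing $x_1=0$. The same argument applied to each index $i$ in turn gives $x_i=0$ for all $i$. The reverse implication is immediate. I expect no obstacle here; the only point to be careful about is that $K$ being a convex cone means it is closed under addition (sum of two elements lies in $K$), which is what makes $x_2+\cdots+x_m\in K$.

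For part~\ref{l:pointed2}, first observe that $L(K)$ is nonempty since $0\in K$, and it is a convex cone because $L$ is linear and $K$ is a convex cone (linear images of convex cones are convex cones). Pointedness is the next step: suppose $y\in L(K)\cap(-L(K))$, so $y=L(x)=-L(x')$ with $x,x'\in K$. Then $L(x+x')=0$, i.e.\ $x+x'\in\ker L\cap K$; since $x+x'\in K$ and \eqref{e:kerL} holds, $x+x'=0$. By part~\ref{l:pointed1} this forces $x=x'=0$, hence $y=0$. So $L(K)$ is pointed.

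The main obstacle is closedness of $L(K)$, which is genuinely the only substantive part. The plan is to reduce to a compact-slice argument. Since $K$ is a pointed closed convex cone, by Lemma~\ref{l:cvxcone} there exists $v\in\reli K^\oplus$ with $\scal{v}{x}>0$ for every $x\in K\smallsetminus\{0\}$ (using that $\overline K\cap(-\overline K)=K\cap(-K)=\{0\}$); actually it suffices to take any $v$ in the interior of $K^\oplus$ relative to $\lspan K$, or one can invoke the standard fact that a pointed closed convex cone admits a hyperplane whose intersection with $K$ is a base, i.e.\ $C:=\menge{x\in K}{\scal{v}{x}=1}$ is a nonempty compact convex set with $K=\cone(C)=\bigcup_{\lambda\ge 0}\lambda C$. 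Now take $y_n\in L(K)$ with $y_n\to y$; write $y_n=L(\lambda_n c_n)=\lambda_n L(c_n)$ with $\lambda_n\ge 0$ and $c_n\in C$. If some subsequence has $\lambda_n\to\infty$, then along a further subsequence $c_n\to c\in C$ (compactness), so $L(c_n)\to L(c)$; but $L(c)\ne 0$ since $c\in C$ implies $c\ne 0$ and $\ker L\cap K=\{0\}$, so $\|y_n\|=\lambda_n\|L(c_n)\|\to\infty$, contradicting $y_n\to y$. Hence $(\lambda_n)$ is bounded, and passing to a subsequence with $\lambda_n\to\lambda\ge 0$ and $c_n\to c\in C$ gives $y=\lim\lambda_nL(c_n)=\lambda L(c)=L(\lambda c)\in L(K)$. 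This closes the argument. The delicate point to get right is the existence of the compact base $C$ for a pointed closed convex cone, and the case analysis on whether $\lambda_n$ stays bounded.
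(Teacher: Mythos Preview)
Your proof of part~\ref{l:pointed1} and the pointedness argument in part~\ref{l:pointed2} match the paper's proof essentially verbatim: write $-x_1 = x_2+\cdots+x_m\in K$, use pointedness to force $x_1=0$, and iterate; for pointedness of $L(K)$, take $y=L(r)=-L(s)$, deduce $r+s\in\ker L\cap K=\{0\}$, then apply part~\ref{l:pointed1}.

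The one genuine difference is in the closedness of $L(K)$. The paper does not argue this directly; it simply invokes \cite[Proposition~3.4]{BM09} (Borwein--Moors), which states that the linear image of a closed convex cone is closed whenever the kernel of the map meets the cone only at the origin. Your route instead builds a compact base $C=\menge{x\in K}{\scal{v}{x}=1}$ from a strictly positive functional $v$ (obtainable from Lemma~\ref{l:cvxcone} since $K$ is closed and pointed), and then runs the standard bounded/unbounded dichotomy on the scalars $\lambda_n$. This is correct and entirely self-contained, and in fact recovers the finite-dimensional case of the Borwein--Moors result; the trade-off is length versus dependence on an external reference. One small point worth tightening when you write it up: the decomposition $y_n=\lambda_n L(c_n)$ with $c_n\in C$ presumes $y_n\neq 0$ (or $K\neq\{0\}$), so either dispose of the trivial case $K=\{0\}$ at the outset or note that zero terms in the sequence can be handled separately.
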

\begin{proof} \ 
\ref{l:pointed1}: 
Assume that $x_1 +\cdots +x_m =0$. Then since $K$ is a convex cone, 
\begin{equation}
-x_1 =x_2 +\cdots +x_m \in K,
\end{equation}
and so $x_1 \in K\cap (-K)$. 
Since $K$ is pointed, we get $x_1 =0$. 
Continuing in this fashion, we eventually conclude that 
$x_1 =\cdots =x_m =0$. The converse is trivial. 

\ref{l:pointed2}: Since $K$ is a closed convex cone, so is $M :=L(K)$ 
due to assumption \eqref{e:kerL} and \cite[Proposition~3.4]{BM09}.
Now let $z \in M\cap (-M)$. Then $z =L(r) =-L(s)$ for some points 
$r,s$ in $K$.
Thus $L(r +s) =L(r) +L(s) =0$, which gives $r +s \in \ker L$, and so $r +s \in \ker L\cap K$. 
By again \eqref{e:kerL}, $r +s =0$, and now \ref{l:pointed1} implies $r =s =0$. 
Therefore, $z =0$, and $M$ is pointed. 
\end{proof}

\begin{lemma}
\label{l:stop}
Let $(a_n)_\nnn$ be a sequence in $X$ such that $a_n \to a \in X$, and $K$ be a pointed closed convex cone of $X$.
Assume that 
\begin{equation}
\label{e:aaK}
(\exi p\in\NN)\quad
a_p=a\;\;\text{and}\;\;
(\forall n \geq p)\quad a_n -a_{n+1} \in K.
\end{equation}
Then
\begin{equation}
\label{e:stop}
(\forall n \geq p)\quad a_n =a.
\end{equation}
\end{lemma}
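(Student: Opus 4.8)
The plan is to show, for each fixed $n \geq p$, that the vector $a_n - a$ lies in both $K$ and $-K$; pointedness of $K$ then immediately forces $a_n - a = 0$, which is exactly \eqref{e:stop}.

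First I would establish $a - a_n \in K$. By the hypothesis $a_k - a_{k+1} \in K$ for all $k \geq p$, together with the fact that a convex cone is closed under addition, telescoping gives
\[
a - a_n = a_p - a_n = \sum_{k=p}^{n-1}(a_k - a_{k+1}) \in K,
\]
where for $n = p$ the sum is empty and the membership $0 \in K$ is trivial (indeed $K$ is a nonempty cone).

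Next I would establish the reverse inclusion $a_n - a \in K$, and this is the one place where the closedness of $K$ is genuinely used. For every integer $m > n$, the same telescoping argument yields $a_n - a_m = \sum_{k=n}^{m-1}(a_k - a_{k+1}) \in K$. Since $a_m \to a$, we have $a_n - a_m \to a_n - a$, and because $K$ is closed we conclude $a_n - a \in K$.

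Combining the two memberships gives $a_n - a \in K \cap (-K)$, which equals $\{0\}$ since $K$ is pointed; hence $a_n = a$ for every $n \geq p$. There is no real obstacle here beyond remembering to invoke closedness of $K$ when passing to the limit in $m$; the rest is routine bookkeeping with telescoping sums and the cone structure.
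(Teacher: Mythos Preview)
Your proof is correct and follows essentially the same approach as the paper's: telescoping sums in the cone, closedness of $K$ to handle the infinite tail, and pointedness to conclude. Your organization is slightly more direct, handling all $n\geq p$ at once via the finite sum $a_p-a_n\in K$, whereas the paper proves $a_{p+1}=a$ and then iterates.
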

\begin{proof} 
Since $K$ is a closed convex cone and $a_n \to a$, it follows from \eqref{e:aaK} that
\begin{equation}
(\forall n \geq p)\quad a_n -a =\sum_{k=n}^\infty (a_k  -a_{k+1}) \in K,
\end{equation}
and so $a_{p+1} -a \in K$. 
Since $a_p =a$, \eqref{e:aaK} gives $a -a_{p+1} \in K$.  
Noting that $K$ is pointed, this implies $a_{p+1} -a \in K\cap
(-K) \subseteq \{0\}$, and hence $a_{p+1} =a$. 
Repeating this argument, we get the conclusion. 
\end{proof}

\subsection{Locally polyhedral sets}

\begin{definition}[local polyhedrality]
Let $C$ be a subset of $X$.
We say that $C$ is \emph{polyhedral at} $c\in C$ if there
exist a polyhedral\footnote{Recall that a set is polyhedral
if it is a finite intersection of halfspaces.} set $D$ and 
$\ve\in\RPP$ such that 
$C\cap \ball{c}{\varepsilon} =D\cap \ball{c}{\varepsilon}$.
\end{definition}

It is clear from the definition that every polyhedron is
polyhedral at each of its points and that 
every subset $C$ of $X$ is polyhedral
at each point in $\inte C$. 

\begin{lemma}
\label{l:lpoly}
Let $C$ be a subset of $X$, and assume that $C$ is polyhedral at
$c \in C$. Then there exist $\ve\in\RPP$, a finite set $I$,
$(d_i)_{i\in I}\in (X\smallsetminus\{0\})^I$,
$(\delta_i)_{i\in I}\in\RR^I$ such that
\begin{equation}
\label{e:0416e1}
C\cap\ball{c}{\ve} = 
\mmenge{x\in X}{\max_{i\in I}
\big(\scal{d_i}{x}-\delta_i\big)\leq 0} \cap \ball{c}{\ve},
\end{equation}
$(\forall i\in I)$ $\scal{d_i}{c}=\delta_i$, 
and 
\begin{equation}
\label{e:0416e2}
\big(\forall y \in C\cap \ball{c}{\varepsilon}\big)\quad 
N_C(y) = \sum_{i\in I(y)}\RP d_i,
\quad \text{where $I(y) := \menge{i\in I}{\scal{d_i}{y}=\delta_i}$.}
\end{equation}
\end{lemma}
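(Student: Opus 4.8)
The plan is to start from the definition of local polyhedrality at $c$: there is a polyhedral set $D$ and some $\varepsilon_0 \in \RPP$ with $C \cap \ball{c}{\varepsilon_0} = D \cap \ball{c}{\varepsilon_0}$. Write $D = \menge{x \in X}{(\forall j \in J)\; \scal{d_j}{x} \leq \delta_j}$ for a finite index set $J$ and $(d_j)_{j\in J}$, $(\delta_j)_{j\in J}$, where we may discard any constraint with $d_j = 0$ (such a constraint is either vacuous or makes $D$ empty, and $c \in D$ rules out the latter). Since $c \in C \cap \ball{c}{\varepsilon_0} \subseteq D$, we have $\scal{d_j}{c} \leq \delta_j$ for every $j$. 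Split $J = I \cup J'$ where $I := \menge{j \in J}{\scal{d_j}{c} = \delta_j}$ collects the constraints active at $c$ and $J' := J \smallsetminus I$ the slack ones. For $j \in J'$ we have $\scal{d_j}{c} < \delta_j$, so by continuity there is $\varepsilon \in \left]0,\varepsilon_0\right]$ small enough that $\scal{d_j}{x} < \delta_j$ for all $x \in \ball{c}{\varepsilon}$ and all $j \in J'$; on $\ball{c}{\varepsilon}$ the slack constraints are therefore redundant, which yields \eqref{e:0416e1} with the stated properties (and $\scal{d_i}{c} = \delta_i$ for $i \in I$ by construction). If $I$ turns out to be empty — i.e. $c \in \inte C$ — then \eqref{e:0416e1} holds trivially with $\ball{c}{\varepsilon} \subseteq C$, and both sides of \eqref{e:0416e2} are $\{0\}$; so from here on I may assume $I \neq \varnothing$, or simply note the formulas hold in that degenerate case too.

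Next I turn to the normal cone formula \eqref{e:0416e2}. Fix $y \in C \cap \ball{c}{\varepsilon}$ and set $E := \menge{x \in X}{\max_{i\in I}(\scal{d_i}{x} - \delta_i) \leq 0}$, a polyhedron with $C \cap \ball{c}{\varepsilon} = E \cap \ball{c}{\varepsilon}$. The key reduction is that the normal cone is a local object: since $C$ and $E$ agree on a neighbourhood of $y$ (namely the interior of $\ball{c}{\varepsilon}$ intersected appropriately — more carefully, $y$ lies in $\ball{c}{\varepsilon}$ but possibly on its boundary, so I should instead pick a small ball around $y$ contained in $\ball{c}{\varepsilon}$; as the constraint $\|x - c\| \leq \varepsilon$ is itself not one of the $d_i$, shrinking is fine), we get $N_C(y) = N_E(y)$. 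This is the one step that needs a short justification: $N_C(y)$ depends only on $C$ near $y$ because the normal cone inequality $\scal{w}{\tilde c - y} \leq 0$ for all $\tilde c \in C$ is, for a convex set, equivalent to its restriction to $\tilde c$ in any neighbourhood of $y$ (one can scale $\tilde c - y$ down into the neighbourhood by convexity). Then \eqref{e:0416e2} becomes the standard formula for the normal cone to a polyhedron at a point: $N_E(y) = \sum_{i \in I(y)} \RP d_i$, where $I(y)$ indexes the constraints of $E$ active at $y$. This is classical (e.g. \cite[Theorem~6.46]{Roc70}-style results, or it can be cited from \cite{Roc70}); since the $d_i$ with $i \notin I(y)$ are strictly slack at $y$, only the active ones contribute.

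The main obstacle I anticipate is not any single deep fact but getting the neighbourhoods right: $y$ is allowed to sit on the sphere $\sphere{c}{\varepsilon}$, so I cannot directly say ``$C = E$ near $y$'' using $\ball{c}{\varepsilon}$ itself. The clean fix is to first establish \eqref{e:0416e1} on $\ball{c}{\varepsilon}$ as above, and then, when proving \eqref{e:0416e2} for a given $y \in C \cap \ball{c}{\varepsilon}$, choose $\rho \in \RPP$ with $\ball{y}{\rho} \subseteq \ball{c}{\varepsilon}$ is too strong if $y$ is on the boundary — instead note that $C \cap \ball{c}{\varepsilon} = E \cap \ball{c}{\varepsilon}$ already suffices: for the inclusion $N_E(y) \subseteq N_C(y)$ use that $C \cap \ball{c}{\varepsilon} \subseteq E$, and for $N_C(y) \subseteq N_E(y)$ use that $E \cap \ball{c}{\varepsilon} \subseteq C$ together with the convexity-scaling argument (any $x \in E$ can be pulled toward $y$ into $E \cap \ball{c}{\varepsilon} \subseteq C$, and the normal cone inequality is positively homogeneous in $\tilde c - y$). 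With that handled, the rest is bookkeeping plus a citation for the polyhedral normal cone formula and for the fact that $L(K)$-type constructions are unneeded here. I would close by remarking that $I(y) = I$ when $y = c$, recovering $\scal{d_i}{c} = \delta_i$ for all $i \in I$, which is consistent with the statement.
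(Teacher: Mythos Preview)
Your approach matches the paper's: the paper's proof is a one-liner, ``Combine Lemma~\ref{l:0416a} with \cite[Theorem~6.46]{RW98},'' which amounts to exactly your strategy --- strip down to the constraints active at $c$, identify $N_C(y)$ with the normal cone of the reduced polyhedron $E$ via locality, and invoke the standard polyhedral normal cone formula.

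There is, however, a genuine (if minor) gap in your boundary fix. You claim that $C\cap\ball{c}{\varepsilon}=E\cap\ball{c}{\varepsilon}$ \emph{alone} suffices via the convexity-scaling argument, but pulling $e\in E$ toward $y$ does \emph{not} land in $\ball{c}{\varepsilon}$ when $\|y-c\|=\varepsilon$ and $\scal{y-c}{e-y}>0$: for $e_t=y+t(e-y)$ one has $\|e_t-c\|^2=\varepsilon^2+2t\scal{y-c}{e-y}+t^2\|e-y\|^2>\varepsilon^2$ for all small $t>0$. So the argument you wrote fails precisely on the sphere, which is the case you were trying to cover.

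The repair is the one you dismissed too quickly, applied with the \emph{original} radius rather than the shrunk one. Take $\varepsilon<\varepsilon_0$ strictly (this costs nothing --- if $J'=\varnothing$ just halve $\varepsilon_0$). Then every $y\in C\cap\ball{c}{\varepsilon}$ admits $\rho>0$ with $\ball{y}{\rho}\subseteq\ball{c}{\varepsilon_0}$, on which $C=D$; and since the inactive constraints are strictly slack at $y$, shrinking $\rho$ further gives $D=E$ on $\ball{y}{\rho}$ as well. Now $C$ and $E$ agree on a genuine ball \emph{centred at $y$}, and Lemma~\ref{l:0416a} (applied with $y$ playing the role of the centre) yields $N_C(y)=N_E(y)$ directly. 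This is the content hidden behind the paper's citation of Lemma~\ref{l:0416a}. (Incidentally, the polyhedral normal cone formula you want is in Rockafellar--Wets \cite{RW98}, not \cite{Roc70}.)
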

\begin{proof}
Combine Lemma~\ref{l:0416a} with \cite[Theorem~6.46]{RW98}. 
\end{proof}

\begin{lemma}
\label{l:proj_poly}
Let $C$ be a nonempty closed convex subset of $X$
that is polyhedral at $c \in C$. 
Then there exists $\varepsilon \in\RPP$ such that
\begin{equation}
\big(\forall x\in P_C^{-1}(C\cap \ball{c}{\ve})\big)\quad
\scal{x -P_Cx}{c -P_Cx} =0 \;\;\text{and}\;\; x -P_Cx \in N_C(c). 
\end{equation}
\end{lemma}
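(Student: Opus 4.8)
The plan is to reduce both assertions to a single one via Lemma~\ref{l:Ncone}, and then to exploit the local polyhedral description furnished by Lemma~\ref{l:lpoly}. First I would apply Lemma~\ref{l:lpoly} at the point $c$ to obtain $\ve\in\RPP$, a finite index set $I$, directions $(d_i)_{i\in I}$, and scalars $(\delta_i)_{i\in I}$ such that \eqref{e:0416e1} and \eqref{e:0416e2} hold, together with $\scal{d_i}{c}=\delta_i$ for every $i\in I$. The crucial elementary observation is that these equalities say precisely that $I(c)=I$; hence \eqref{e:0416e2} applied at $y=c$ yields $N_C(c)=\sum_{i\in I}\RP d_i$.

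Next I would record the resulting monotonicity of the local normal cone: for any $y\in C\cap\ball{c}{\ve}$ one has $I(y)\subseteq I=I(c)$, so \eqref{e:0416e2} gives $N_C(y)=\sum_{i\in I(y)}\RP d_i\subseteq\sum_{i\in I}\RP d_i=N_C(c)$. Now fix $x\in P_C^{-1}(C\cap\ball{c}{\ve})$ and set $y:=P_Cx\in C\cap\ball{c}{\ve}$. By the standard characterization of the projector onto a nonempty closed convex set, $x-P_Cx=x-y\in N_C(y)$, and the inclusion just established upgrades this to $x-P_Cx\in N_C(c)$, which is one of the two claimed conclusions.

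Finally, to obtain the orthogonality relation I would apply Lemma~\ref{l:Ncone} with the roles ``$x$'' and ``$y$'' there played by the present $x$ and by $c$: since $c\in C$ and $x-P_Cx\in N_C(c)$, that lemma yields $\scal{x-P_Cx}{c-P_Cx}=0$, completing the proof (the case $P_C^{-1}(C\cap\ball{c}{\ve})=\varnothing$ being vacuous). I do not anticipate a real obstacle: the only point that needs care is recognizing that the normalization $\scal{d_i}{c}=\delta_i$ forces $I(c)=I$, so that $N_C(c)$ is the largest among the normal cones of $C$ at points near $c$; everything else is a direct combination of Lemmas~\ref{l:Ncone} and~\ref{l:lpoly}.
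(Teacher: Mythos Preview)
Your argument is correct and essentially the same as the paper's: both invoke Lemma~\ref{l:lpoly} for the local representation and Lemma~\ref{l:Ncone} to link the two conclusions, using the key fact that $\scal{d_i}{c}=\delta_i$ for every $i\in I$. The only cosmetic difference is the order: the paper first computes $\scal{x-P_Cx}{c-P_Cx}=0$ directly from the decomposition $x-y=\sum_{i\in I(y)}\lambda_i d_i$ and then applies Lemma~\ref{l:Ncone} to deduce $x-P_Cx\in N_C(c)$, whereas you first observe $N_C(y)\subseteq N_C(c)$ (equivalently $I(y)\subseteq I(c)=I$) and then invoke Lemma~\ref{l:Ncone} in the other direction.
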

\begin{proof}
We adopt the notation of the conclusion of Lemma~\ref{l:lpoly}. 
Let $x\in X$ such that $y := P_Cx\in\ball{c}{\ve}$.
Then $x-y\in N_C(y)$ and Lemma~\ref{l:lpoly} guarantees the
existence of $(\lambda_i)_{i\in I(x)}\in\RP^{I(y)}$ such that 
\begin{equation}
x -y =\sum_{i\in I(y)} \lambda_id_i.
\end{equation}
Then
\begin{equation}
\scal{x -y}{c -y} 
=\sum_{i\in I(y)} \lambda_i\scal{d_i}{c -y} 
=\sum_{i\in I(y)} \lambda_i(\scal{d_i}{c}-\scal{d_i}{y})
=\sum_{i\in I(y)} \lambda_i(\delta_i-\delta_i) 
=0
\end{equation}
and so $\scal{x-P_Cx}{c-P_Cx} = 0$. 
Furthermore, by Lemma~\ref{l:Ncone}, $x -P_Cx \in N_C(c)$.
\end{proof}

\subsection{Two convex sets}

\begin{proposition}
\label{p:cone}
Let $A$ and $B$ be closed convex subsets of $X$ such
that $A\cap B\neq \varnothing$. 
Then the  following hold:
\begin{enumerate}
\item 
\label{p:cone0}
$(\forall a\in A)(\forall b\in B)\quad 
N_{A-B}(a -b) =N_A(a)\cap \big(-N_B(b)\big)$. 
\item 
\label{p:cone1}
$0 \in \inte(A -B) \quad\Leftrightarrow\quad N_{A-B}(0) =\{0\}$.
\item 
\label{p:cone2}
$\inte B \neq\varnothing \quad\Leftrightarrow\quad (\forall x \in B)\; N_B(x) \cap (-N_B(x)) =\{0\}$.
\end{enumerate}
\end{proposition}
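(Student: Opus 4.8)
The plan is to establish the three items in the order stated, using \ref{p:cone0} and \ref{p:cone1} as stepping stones for \ref{p:cone2}.

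\emph{Item \ref{p:cone0}.} Fix $a\in A$ and $b\in B$. Then $a-b\in A-B$ and $A-B$ is convex, so all three normal cones are well defined. I would simply unfold the definition of the normal cone. For the inclusion ``$\subseteq$'', take $y\in N_{A-B}(a-b)$, so $\scal{y}{(a'-b')-(a-b)}\leq 0$ for all $a'\in A$ and $b'\in B$; specializing $b'=b$ gives $y\in N_A(a)$, and specializing $a'=a$ gives $\scal{y}{b'-b}\geq 0$ for all $b'\in B$, i.e., $y\in -N_B(b)$. For ``$\supseteq$'', add the defining inequalities of $N_A(a)$ and $-N_B(b)$. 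This step is routine.

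\emph{Item \ref{p:cone1}.} First note that $0\in A-B$ because $A\cap B\neq\varnothing$. The forward implication is easy: if $0\in\inte(A-B)$ and $0\neq y\in N_{A-B}(0)$, then $\ball{0}{\rho}\subseteq A-B$ for some $\rho\in\RPP$, so $\rho y/\|y\|\in A-B$, and $\scal{y}{\rho y/\|y\|}=\rho\|y\|>0$ contradicts $\scal{y}{c}\leq 0$ for all $c\in A-B$. For the converse I would argue contrapositively: suppose $0\notin\inte(A-B)$. If $\aff(A-B)\neq X$, then any nonzero $y$ orthogonal to $\aff(A-B)$ satisfies $\scal{y}{c}=0$ for all $c\in A-B$, hence $0\neq y\in N_{A-B}(0)$. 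Otherwise $\aff(A-B)=X$, so $0\notin\reli(A-B)$, and the supporting hyperplane theorem \cite[Theorem~11.6]{Roc70} produces $y\neq 0$ with $\scal{y}{c}\leq 0$ for all $c\in A-B$, i.e., $0\neq y\in N_{A-B}(0)$. Either way $N_{A-B}(0)\neq\{0\}$. This separation argument is the only place where any real work is required; everything hinges on correctly handling the case where $A-B$ has empty interior, which is dispatched by taking $y$ orthogonal to $\aff(A-B)$.

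\emph{Item \ref{p:cone2}.} The forward implication mirrors the forward part of \ref{p:cone1}: if $\inte B\neq\varnothing$ and $0\neq y\in N_B(x)\cap(-N_B(x))$ for some $x\in B$, then $\scal{y}{b-x}=0$ for all $b\in B$, so $B$ lies in the proper affine subspace $x+y^\perp$, contradicting $\inte B\neq\varnothing$; hence $N_B(x)\cap(-N_B(x))=\{0\}$. For the converse, pick any $x_0\in B$; applying \ref{p:cone0} with $A$ replaced by $B$ gives $N_{B-B}(0)=N_B(x_0)\cap(-N_B(x_0))=\{0\}$, then \ref{p:cone1} with $A$ replaced by $B$ (note $B\cap B\neq\varnothing$) gives $0\in\inte(B-B)$, and finally Lemma~\ref{l:int} yields $\inte B\neq\varnothing$. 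The one genuinely nontrivial ingredient across the whole proposition is the separation argument in \ref{p:cone1}, particularly covering the empty-interior case; the rest is manipulation of the definition of the normal cone together with the already-established Lemma~\ref{l:int}.
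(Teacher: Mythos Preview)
Your proof is correct and follows essentially the same route as the paper's. The paper dispatches \ref{p:cone0} by citing \cite[Proposition~2.11(ii)]{MN14} (or ``direct computation''---exactly what you wrote out), dispatches \ref{p:cone1} by citing \cite[Corollary~6.44]{BC11} (whose content is the separation argument you supplied), and proves \ref{p:cone2} by observing via \ref{p:cone0} that $N_B(x)\cap(-N_B(x))=N_{B-B}(0)$ is independent of $x$, then chaining Lemma~\ref{l:int} and \ref{p:cone1} to handle both directions at once; your converse in \ref{p:cone2} is identical, and your forward direction is just a direct unpacking of the same idea.
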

\begin{proof} \
\ref{p:cone0}: 
Noting that $N_{-B}(-b) =-N_B(b)$ by definition,  
the conclusion follows from \cite[Proposition~2.11(ii)]{MN14} or
from a direct computation. 
\ref{p:cone1}: Clear from \cite[Corollary~6.44]{BC11}. 
\ref{p:cone2}: Let $x \in B$. By \ref{p:cone0}, 
$N_B(x) \cap (-N_B(x)) =N_{B-B}(0)$.
Now Lemma~\ref{l:int} and \ref{p:cone1} complete the proof. 
\end{proof}

\begin{corollary}
\label{c:CQ}
Let $A$ be a linear subspace of $X$, and let $B$ be a nonempty
closed convex subset of $X$ such that $A\cap B\neq\varnothing$. 
Then the following hold:
\begin{enumerate}
\item 
\label{c:CQ_ApNB}
$0 \in \inte(A -B) \quad\Leftrightarrow\quad \left[ (\forall x \in A\cap B)\; A^\perp\cap N_B(x) =\{0\} \right]$.
\item 
\label{c:CQ_AintB}
$A\cap \inte B \neq\varnothing \quad\Leftrightarrow\quad \big[ 0
\in \inte(A -B) \quad\text{and}\quad \inte B \neq\varnothing \big]$.
\end{enumerate}
\end{corollary}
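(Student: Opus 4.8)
The plan is to prove Corollary~\ref{c:CQ} by specializing Proposition~\ref{p:cone} to the case where $A$ is a linear subspace, using that $N_A(a) = A^\perp$ for every $a \in A$.

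For part~\ref{c:CQ_ApNB}, I would start from the equivalence in Proposition~\ref{p:cone}\ref{p:cone1}, namely $0 \in \inte(A-B) \Leftrightarrow N_{A-B}(0) = \{0\}$. Next I would rewrite $N_{A-B}(0)$ using Proposition~\ref{p:cone}\ref{p:cone0}: since $0 \in A-B$, we may write $0 = a - b$ with $a \in A \cap B$ and $b = a$, so that $N_{A-B}(0) = N_A(a) \cap (-N_B(a)) = A^\perp \cap (-N_B(a))$. Strictly I should be a little careful: $N_{A-B}(0)$ does not depend on the choice of representation $0 = a - b$, but the right-hand side of \ref{p:cone0} does \emph{a priori}; so the clean route is to observe that $N_{A-B}(0) = \bigcap_{a \in A \cap B} \big(A^\perp \cap (-N_B(a))\big)$, or even simpler, to note that $N_{A-B}(0) = \{0\}$ iff for \emph{some} (equivalently every) $a\in A\cap B$ we have $A^\perp \cap (-N_B(a)) = \{0\}$, which is the same as $A^\perp \cap N_B(a) = \{0\}$ because $A^\perp$ is a subspace. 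Since one can take $a$ to range over all of $A \cap B$, the condition ``$A^\perp \cap N_B(x) = \{0\}$ for all $x \in A \cap B$'' is the correct reformulation. I would phrase this carefully to get the quantifier right; that is the only genuinely delicate point in~\ref{c:CQ_ApNB}.

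For part~\ref{c:CQ_AintB}, the implication ``$\Rightarrow$'' is immediate: if $x \in A \cap \inte B$, then $\inte B \neq \varnothing$ trivially, and $0 = x - x \in \inte(A - B)$ since $x \in A$ and $x \in \inte B$ give that a neighborhood of $x$ lies in $B$, hence a neighborhood of $0 = x-x$ lies in $A - B$ (indeed $\{x\} - U \subseteq A - B$ for a neighborhood $U$ of $x$ contained in $B$; more simply, $\inte(A-B) \supseteq A - \inte B \ni x - x$). For the converse ``$\Leftarrow$'', I would argue by contraposition or directly: assume $0 \in \inte(A-B)$ and $\inte B \neq \varnothing$ but $A \cap \inte B = \varnothing$. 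Then $A$ and $\inte B$ are disjoint convex sets with $\inte B$ open, so by a separation theorem there is a nonzero $u$ and $\alpha$ with $\scal{u}{a} \le \alpha \le \scal{u}{b}$ for all $a \in A$, $b \in \inte B = \overline{\inte B}\text{'s interior}$, hence for all $b \in B$ by density of $\inte B$ in $B$ (a convex set with nonempty interior equals the closure of its interior). Since $A$ is a subspace, $\scal{u}{a} \le \alpha$ for all $a \in A$ forces $u \in A^\perp$ and $\alpha \ge 0$; pick any $x \in A \cap B$, then $\scal{u}{a} \le \scal{u}{x}$ for all $a\in A$ again forces (subspace) $\scal{u}{x}=\scal{u}{a}$ for all $a$, and $\scal{u}{b} \ge \scal{u}{x}$ for all $b \in B$ shows $-u \in N_B(x)$, with $u \ne 0$. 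But $u \in A^\perp$, so $u \in A^\perp \cap N_B(x)$ with... wait, I need $u$ or $-u$; since $A^\perp$ is a subspace this is harmless, so $A^\perp \cap N_B(x) \ne \{0\}$, which by part~\ref{c:CQ_ApNB} contradicts $0 \in \inte(A-B)$.

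The main obstacle I anticipate is getting the quantifiers and the role of the base point $a \in A \cap B$ exactly right in part~\ref{c:CQ_ApNB} — specifically justifying that $N_{A-B}(0) = \{0\}$ is equivalent to the condition holding for \emph{all} $x \in A \cap B$ rather than just one, which hinges on the fact that $N_{A-B}(0)$ is computed the same way from any representation $0 = x - x$ with $x \in A \cap B$. In practice the cleanest write-up will simply invoke Proposition~\ref{p:cone}\ref{p:cone0} with $a = b = x$ to get $N_A(x) \cap (-N_B(x)) = N_{A-B}(0)$ for each $x \in A\cap B$, note the left side equals $A^\perp \cap (-N_B(x)) = A^\perp \cap N_B(x)$, and then combine with Proposition~\ref{p:cone}\ref{p:cone1}. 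Everything else is routine separation-theorem bookkeeping and the elementary fact that a convex set with nonempty interior is the closure of its interior.
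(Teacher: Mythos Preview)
Your argument for part~\ref{c:CQ_ApNB} is correct and essentially identical to the paper's: both compute, for any $x\in A\cap B$, that $A^\perp\cap N_B(x)$ equals $-N_{A-B}(0)$ via Proposition~\ref{p:cone}\ref{p:cone0} (using that $N_A(x)=A^\perp$ is a subspace), and then invoke Proposition~\ref{p:cone}\ref{p:cone1}. Your extended discussion of the quantifier is fine but unnecessary: since the set $N_{A-B}(0)$ is independent of the representative, the identity $A^\perp\cap N_B(x)=-N_{A-B}(0)$ already shows that the left side is the \emph{same} set for every $x\in A\cap B$, so ``for some $x$'' and ``for all $x$'' are automatically equivalent.

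For part~\ref{c:CQ_AintB}, your ``$\Rightarrow$'' matches the paper (``obvious''), but your ``$\Leftarrow$'' takes a genuinely different route. The paper appeals to the relative-interior calculus rule $\reli(A-B)=\reli A-\reli B$ (Rockafellar, Corollary~6.6.2): from $0\in\inte(A-B)\subseteq\reli(A-B)$ and $\inte B\neq\varnothing$ one gets $0\in A-\inte B$ in one line. You instead argue by contraposition via separation: if $A\cap\inte B=\varnothing$, separate the subspace $A$ from the open convex set $\inte B$ to obtain a nonzero $u\in A^\perp$ with $-u\in N_B(x)$ for any $x\in A\cap B$, contradicting part~\ref{c:CQ_ApNB}. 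Your argument is correct (once you note $u\in A^\perp$ forces $\scal{u}{x}=0$ and then extend $\scal{u}{b}\ge\alpha$ from $\inte B$ to $B$ by closure, yielding $-u\in N_B(x)$). The paper's route is shorter and avoids the separation bookkeeping; yours is more self-contained and has the pleasant feature of deriving~\ref{c:CQ_AintB} directly from~\ref{c:CQ_ApNB}, making explicit why the two equivalences belong in the same corollary.
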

\begin{proof} \ 
\ref{c:CQ_ApNB}: Let $x \in A\cap B$. Since $A$ is a linear subspace, 
Proposition~\ref{p:cone}\ref{p:cone0} yields
\begin{equation}
A^\perp\cap N_B(x) =-(N_A(x)\cap (-N_B(x))) =-N_{A-B}(0).
\end{equation}
Now apply Proposition~\ref{p:cone}\ref{p:cone1}.

\ref{c:CQ_AintB}: 
If $0 \in \inte(A -B)$ and $\inte B \neq\varnothing$, 
then $0 \in \reli(A -B)$ and $\reli B=\inte B$.
Since $A$ is a linear subspace, we have 
$\reli A =A$, and using \cite[Corollary~6.6.2]{Roc70}, we get
\begin{equation}
0 \in \reli(A -B) =\reli A -\reli B = A -\inte B,
\end{equation}
which implies $A\cap \inte B \neq\varnothing$. The converse is obvious.
\end{proof}

\begin{lemma}
\label{l:0416a}
Let $A$ and $B$ be closed convex subsets of $X$, and let
$c\in A\cap B$ and $\varepsilon\in\RPP$ be such that
$A\cap\ball{c}{\ve}=B\cap\ball{c}{\ve}$.
Then $N_A(c)=N_B(c)$.
\end{lemma}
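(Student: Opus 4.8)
The statement to prove is Lemma~\ref{l:0416a}: if $A$ and $B$ are closed convex sets agreeing with each other on a ball $\ball{c}{\ve}$ around a common point $c$, then $N_A(c)=N_B(c)$. The plan is to argue that the normal cone at $c$ is a purely local object, so it depends only on the trace of the set on any neighbourhood of $c$. Concretely, I would fix $y\in N_A(c)$ and show $y\in N_B(c)$; by symmetry the reverse inclusion follows the same way, giving equality.

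\textbf{Key steps.} First recall the definition: $y\in N_A(c)$ means $\scal{y}{a-c}\le 0$ for every $a\in A$. I want to conclude $\scal{y}{b-c}\le 0$ for every $b\in B$. Fix an arbitrary $b\in B$. If $b=c$ the inequality is trivial, so assume $b\neq c$. The idea is to move from $b$ toward $c$ by a small amount so as to land inside $\ball{c}{\ve}$: set $b_t := c + t(b-c)$ for $t\in\left]0,1\right]$. Since $B$ is convex and $c,b\in B$, we have $b_t\in B$ for all such $t$. Choosing $t$ small enough that $t\|b-c\|\le\ve$ gives $b_t\in B\cap\ball{c}{\ve} = A\cap\ball{c}{\ve}\subseteq A$. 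Hence $\scal{y}{b_t-c}\le 0$ because $y\in N_A(c)$. But $b_t-c = t(b-c)$, so $\scal{y}{b_t-c} = t\scal{y}{b-c}$, and dividing by $t>0$ yields $\scal{y}{b-c}\le 0$. Since $b\in B$ was arbitrary, $y\in N_B(c)$. This shows $N_A(c)\subseteq N_B(c)$; exchanging the roles of $A$ and $B$ (the hypothesis is symmetric) gives $N_B(c)\subseteq N_A(c)$, hence $N_A(c)=N_B(c)$.

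\textbf{Main obstacle.} There is no real obstacle here — the argument is a one-line convexity/locality observation. The only point requiring a moment's care is making sure the scaling parameter $t$ is chosen after fixing $b$ (it may depend on $\|b-c\|$), and that we use convexity of $B$ to keep $b_t$ in $B$ while using the set-equality on the ball to transfer it into $A$. One could alternatively phrase this via the equivalence of $N_C(c)$ with the polar of the tangent cone and note tangent cones are local, but the direct computation above is shorter and self-contained. Note also that this lemma is exactly what Lemma~\ref{l:lpoly} invokes, so only the normal-cone equality at the base point $c$ is needed, which is precisely what we have established.
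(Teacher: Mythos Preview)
Your proof is correct. It differs from the paper's argument, which identifies $N_A(c)$ with $\partial\iota_A(c)$ and then invokes the directional-derivative characterization of the subdifferential (citing \cite[Proposition~17.17(i)]{BC11}): since $\iota_A'(c;h)=\lim_{t\downarrow 0}\iota_A(c+th)/t$ depends only on $A$ in an arbitrarily small neighbourhood of $c$, one has $\iota_A'(c;\cdot)=\iota_B'(c;\cdot)$ and hence $N_A(c)=N_B(c)$. Your approach makes the locality explicit by the elementary scaling $b_t=c+t(b-c)$ and uses only the definition of $N_C$ and convexity, avoiding the external reference and the indicator/subdifferential machinery; the paper's route is slightly slicker if one already has that proposition at hand. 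Both arguments encode the same idea---the normal cone at $c$ is determined by the trace of the set on any ball around $c$---so the difference is one of packaging rather than substance.
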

\begin{proof}
Let $u\in X$.
Working with the directional derivative and 
using \cite[Proposition~17.17(i)]{BC11}, we have 
$u\in N_A(c) = \partial \iota_A(c)$
$\Leftrightarrow$
$(\forall h\in X)$ $\scal{u}{h}\leq \iota_A'(c;h)$
$\Leftrightarrow$
$(\forall h\in X)$ $\scal{u}{h}\leq \iota_B'(c;h)$
$\Leftrightarrow$
$u\in N_B(c) = \partial \iota_B(c)$.
\end{proof}

\subsection{Monotone operators}

\begin{lemma}
\label{l:block}
Let $L\colon X\times X\to X\times X\colon
(x,y)\mapsto (L_{11}x+L_{12}y,L_{21}x+L_{22}y)$,
where each $L_{ij}\colon X\to X$ is linear.
Assume that 
$L_{11}^*L_{22} +L_{21}^*L_{12} =\Id$ and that
$L_{11}^*L_{21}$ and $L_{22}^*L_{12}$ are skew\footnote{Recall
that $S\colon X\to X$ is skew if $S^*=-S$.}.
Then the following hold:
\begin{enumerate}
\item
\label{l:block1}
If $(x,y)\in X\times X$ and $(u,v)=L(x,y)$,
then 
$\scal{u}{v} =\scal{x}{y}$.
\item
\label{l:block2}
Let $M\colon X\To X$ be a monotone operator, and define 
$M_L\colon X\To X$ via 
$\gra M_L = L(\gra M)$. 
Then for all pairs $(x,y)$ and $(x',y')$ in $\gra M$ 
and $(u,v)=L(x,y)$, $(u',v')=L(x',y')$, we have 
$\scal{u-v}{u'-v'} = \scal{x-x'}{y-y'}$;
consequently, $M_L$ is monotone. 
\end{enumerate}
\end{lemma}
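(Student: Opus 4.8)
The plan is to verify part \ref{l:block1} by a direct computation, and then derive part \ref{l:block2} essentially for free by applying \ref{l:block1} to a suitable ``difference'' pair. For \ref{l:block1}, I would simply expand $\scal{u}{v}=\scal{L_{11}x+L_{12}y}{L_{21}x+L_{22}y}$ using linearity of the inner product into four terms:
\begin{equation*}
\scal{L_{11}x}{L_{21}x}+\scal{L_{11}x}{L_{22}y}+\scal{L_{12}y}{L_{21}x}+\scal{L_{12}y}{L_{22}y}.
\end{equation*}
The first term equals $\scal{x}{L_{11}^*L_{21}x}$, which vanishes because $L_{11}^*L_{21}$ is skew (a skew operator $S$ satisfies $\scal{x}{Sx}=\scal{S^*x}{x}=-\scal{Sx}{x}=-\scal{x}{Sx}$, hence $\scal{x}{Sx}=0$); similarly the last term equals $\scal{y}{L_{22}^*L_{12}y}=0$. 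The middle two terms combine to $\scal{x}{L_{11}^*L_{22}y}+\scal{x}{L_{21}^*L_{12}y}=\scal{x}{(L_{11}^*L_{22}+L_{21}^*L_{12})y}=\scal{x}{y}$ by the first hypothesis. This gives the claim.

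For \ref{l:block2}, the observation is that since $L$ is linear and block-structured, $L(x,y)-L(x',y')=L(x-x',y-y')$. So setting $(\xi,\eta):=(x-x',y-y')$ and $(\mu,\nu):=(u-u',v-v')=L(\xi,\eta)$, part \ref{l:block1} applied to the pair $(\xi,\eta)$ immediately yields $\scal{\mu}{\nu}=\scal{\xi}{\eta}$, i.e. $\scal{u-u'}{v-v'}=\scal{x-x'}{y-y'}$. Finally, monotonicity of $M$ means $\scal{x-x'}{y-y'}\ge 0$ for all $(x,y),(x',y')\in\gra M$, so the identity just established gives $\scal{u-u'}{v-v'}\ge 0$; since every pair in $\gra M_L=L(\gra M)$ is of the form $(u,v)=L(x,y)$ with $(x,y)\in\gra M$, this says precisely that $M_L$ is monotone.

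There is really no serious obstacle here; the only point requiring a moment's care is the elementary fact that a skew linear operator $S$ has $\scal{x}{Sx}=0$ for every $x$, which is what kills the two ``diagonal'' cross terms. One should also note that the block form $L(x,y)=(L_{11}x+L_{12}y,\,L_{21}x+L_{22}y)$ makes $L$ genuinely linear on $X\times X$, so that differences map to differences as used above; this is transparent from the definition. I would present \ref{l:block1} first with the four-term expansion, then deduce \ref{l:block2} in two short sentences as described.
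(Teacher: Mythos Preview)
Your proposal is correct and follows essentially the same approach as the paper: expand $\scal{u}{v}$ bilinearly, kill the two diagonal terms via the skewness hypotheses, combine the cross terms via the $\Id$ hypothesis, and then deduce \ref{l:block2} from \ref{l:block1} by linearity of $L$ applied to the difference pair. You also (correctly) read the displayed identity in \ref{l:block2} as $\scal{u-u'}{v-v'}=\scal{x-x'}{y-y'}$, which is what the paper's proof and the subsequent Corollary actually use; the printed ``$\scal{u-v}{u'-v'}$'' is a typo.
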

\begin{proof} \
\ref{l:block1}: The assumptions indeed imply 
\begin{subequations}
\begin{align}
\scal{u}{v} &=\scal{L_{11}x +L_{12}y}{L_{21}x +L_{22}y} \\ 
&=\scal{x}{(L_{11}^*L_{22} +L_{21}^*L_{12})y} +\scal{x}{L_{11}^*L_{21}x} +\scal{y}{L_{22}^*L_{12}y} =\scal{x}{y}.
\end{align}
\end{subequations}
\ref{l:block2}: 
Since $(x-x',y-y') = (x,y)-(x',y')$ and $L$ is linear, the result
follows from \ref{l:block1}. 
\end{proof}

\begin{corollary}
\label{c:partial}
Let $A$ be a linear subspace of $X$, 
and let $(x,y)$ and $(x',y')$ be in $X\times X$.
Then the following hold:
\begin{enumerate}
\item $\scal{P_Ay -P_{A^\perp}x}{P_Ax -P_{A^\perp}y} =\scal{y}{x}$ 
\item $\scal{(P_Ay -P_{A^\perp}x)-(P_Ay' -P_{A^\perp}x')}{(P_Ax
-P_{A^\perp}y)-(P_Ax' -P_{A^\perp}y')} = \scal{y-y'}{x-x'}$. 
\end{enumerate}
\end{corollary}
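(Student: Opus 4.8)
The plan is to recognize Corollary~\ref{c:partial} as a direct application of Lemma~\ref{l:block} to the specific block-linear operator $L$ built from the orthogonal projections onto $A$ and $A^\perp$. First I would define
\begin{equation}
L\colon X\times X\to X\times X\colon (x,y)\mapsto (P_Ay-P_{A^\perp}x,\; P_Ax-P_{A^\perp}y),
\end{equation}
so that in the notation of Lemma~\ref{l:block} we have $L_{11}=-P_{A^\perp}$, $L_{12}=P_A$, $L_{21}=P_A$, and $L_{22}=-P_{A^\perp}$. Each block is linear (indeed, a projection or its negative), and each is self-adjoint since orthogonal projectors are self-adjoint; I would record $L_{ij}^*=L_{ij}$ for all four blocks.

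Next I would verify the two hypotheses of Lemma~\ref{l:block}. For the identity $L_{11}^*L_{22}+L_{21}^*L_{12}=\Id$: using self-adjointness, $L_{11}^*L_{22}=P_{A^\perp}P_{A^\perp}=P_{A^\perp}$ (idempotency) and $L_{21}^*L_{12}=P_AP_A=P_A$, so the sum is $P_{A^\perp}+P_A=\Id$, as required. For the skewness conditions: $L_{11}^*L_{21}=-P_{A^\perp}P_A=0$ because $A^\perp$ and $A$ are orthogonal, so $\ran P_A\subseteq A=\ker P_{A^\perp}$; likewise $L_{22}^*L_{12}=-P_{A^\perp}P_A=0$. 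The zero operator is trivially skew, so both hypotheses hold.

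With the hypotheses checked, part~(i) of the corollary is exactly Lemma~\ref{l:block}\ref{l:block1} applied to $(x,y)$ with $(u,v)=L(x,y)$, and part~(ii) is exactly Lemma~\ref{l:block}\ref{l:block2}; in fact, since $L$ is linear, (ii) follows from (i) by replacing $(x,y)$ with $(x-x',y-y')$ (note $L(x,y)-L(x',y')=L(x-x',y-y')$, so the difference $P_Ay-P_{A^\perp}x-(P_Ay'-P_{A^\perp}x')$ equals $P_A(y-y')-P_{A^\perp}(x-x')$). I would write the proof in essentially one line: \emph{Apply Lemma~\ref{l:block} with $L_{11}=L_{22}=-P_{A^\perp}$ and $L_{12}=L_{21}=P_A$, observing that these are self-adjoint, that $P_{A^\perp}^2+P_A^2=\Id$, and that $P_{A^\perp}P_A=0$.}

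There is no real obstacle here; the only point requiring a moment's care is bookkeeping the signs and the roles of the four blocks so that the off-diagonal products $L_{11}^*L_{21}$ and $L_{22}^*L_{12}$ — rather than, say, $L_{11}^*L_{12}$ — are the ones that must vanish. Since every cross term between $A$ and $A^\perp$ is zero, whichever pairing one needs comes out to $0$, so the skewness hypothesis is comfortably satisfied regardless.
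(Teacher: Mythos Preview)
Your proof is correct and follows essentially the same approach as the paper: both apply Lemma~\ref{l:block} after verifying the three hypotheses using idempotency, self-adjointness, and $P_{A^\perp}P_A=0$. The only cosmetic difference is the labeling of the blocks --- the paper sets $L_{11}=L_{22}=P_A$ and $L_{12}=L_{21}=-P_{A^\perp}$, whereas you set $L_{11}=L_{22}=-P_{A^\perp}$ and $L_{12}=L_{21}=P_A$; both choices yield the same $L$ up to swapping the two output components, and the inner product is symmetric, so the conclusion is identical.
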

\begin{proof}
Set $L_{11}=L_{22}=P_A$ and $L_{12}=L_{21}=-P_{A^\perp}$ in 
Lemma~\ref{l:block}. 
\end{proof}

\begin{remark}
Spingarn's original \emph{partial inverse} \cite{Spi83}
arises from
Lemma~\ref{l:block} by setting 
$L_{11}=L_{22}=P_A$ and $L_{12}=L_{21}=P_{A^\perp}$
while in in Corollary~\ref{c:partial} we used 
$L_{11}=L_{22}=P_A$ and $L_{12}=L_{21}=-P_{A^\perp}$.
Other choices are possible: e.g., 
if $X=\RR^2$ and $R$ denotes the clockwise rotator by $\pi/4$,
then a valid choice for Lemma~\ref{l:block} is 
$L_{11}=L_{22}=\tfrac{1}{2}R$ and
$L_{12}=L_{21}=\tfrac{1}{2}R^*$.
\end{remark}

\subsection{Finite convergence conditions for the proximal point
algorithm}

\label{s:Luque}

It is known (see, e.g., \cite[Theorem~6]{EB92}) that  
the DRA
is a special case of the exact proximal point algorithm
(with constant parameter 1). The latter generates,
for a given maximally monotone operator $M\colon X\To X$ with resolvent 
$T := J_M = (\Id+M)^{-1}$, a sequence
by 
\begin{equation}
z_0 \in X, \quad (\forall\nnn)\quad z_{n+1} :=Tz_n = (\Id +M)^{-1}z_n,  
\end{equation}
in order to solve the problem
\begin{equation}
\text{find $z\in X$ such that $0\in Mz$; equivalently, $z\in \Fix
T$.}
\end{equation}
A classical sufficient condition dates back to Rockafellar 
(see \cite[Theorem~3]{Roc76}) who proved finite convergence when
\begin{equation}
\label{e:R}
(\exi \bar{z}\in X)(\exi \delta\in\RPP)\quad
\ball{0}{\delta} \subseteq M\bar{z}. 
\end{equation}
It is instructive to view this condition from the resolvent side:
\begin{equation}
\label{e:Rres}
(\exi\bar{z}\in X)(\exi\delta\in\RPP)(\forall z\in X)\quad
\|z-\bar{z}\|\leq\delta \;\Rightarrow\; Tz=\bar{z}.
\end{equation}
Note that since $\Fix T$ is convex, this implies that 
\begin{equation}
\Fix T = \{\bar{z}\}
\end{equation}
is a singleton, 
which severely limits the applicability of this condition. 

Later, Luque (see \cite[Theorem~3.2]{Luq84}) proved finite 
convergence under the more general condition 
\begin{equation}
\label{e:L}
M^{-1}0\neq\varnothing
\;\text{and}\;
(\exi\delta\in\RPP)\;\;
M^{-1}\big(\ball{0}{\delta}\big)\subseteq M^{-1}0.
\end{equation}
On the resolvent side, his condition turns into
\begin{equation}
\label{e:Lres}
\Fix T\neq\varnothing\;\text{and}\;
(\exists \delta >0)\quad \|z -Tz\| \leq\delta \quad\Rightarrow\quad Tz \in \Fix T.
\end{equation}
However, when $M^{-1}0=\Fix T\neq\varnothing$,
it is well known that $z_n-Tz_n\to 0$; thus, 
the finite-convergence condition is essentially a tautology.

When illustrating our main results, we shall provide
examples where both \eqref{e:R} and \eqref{e:L} fail
while our results are applicable (see Remark~\ref{r:Bpoly} and
Example~\ref{ex:epi} below).

\subsection{Douglas--Rachford operator}

For future use, we record some results on the DRA that are easily
checked. 
Recall that, for two nonempty closed convex subsets $A$ and $B$,
the DRA operator is 
\begin{equation}
T :=\Id -P_A +P_B(2P_A -\Id) =\tfrac{1}{2}\left(\Id +R_BR_A\right).
\end{equation}

The following result, the proof of which we omit since
it is a direct verification, records 
properties in the presence of
affinity/linearity.

\begin{proposition}
\label{p:0415a}
Let $A$ be an affine subspace of $X$ and let $B$ be a nonempty closed
convex subset of $X$. 
Then the following hold:
\begin{enumerate}
\item
\label{p:0415a1}
$P_A$ is an affine operator and
\begin{equation}
\label{ep:0415a1}
P_AR_A =P_A, \quad P_AT =P_AP_BR_A \quad\text{and}\quad T =(P_A
+P_B -\Id)R_A.
\end{equation}
\item
\label{p:0415a2}
If $A$ is a linear subspace, then
$P_A$ is a symmetric linear operator and
\begin{equation}
\label{ep:0415a2}
T = \big(P_AP_B -P_{A^\perp}(\Id -P_B)\big)R_A. 
\end{equation}
\end{enumerate}
\end{proposition}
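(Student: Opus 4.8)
The plan is to fix the affine structure once and reduce everything to two elementary facts: the affine formula for $P_A$ and the orthogonality characterization of the projector. Write $A=a_0+V$ where $a_0\in A$ is arbitrary and $V:=A-A$ is a linear subspace; then $P_Ax=a_0+P_V(x-a_0)$, which is visibly affine, and one records the translation identity $P_A(x+h)=P_Ax+P_Vh$ for all $x,h\in X$ (in particular $P_A(P_Ax+h)=P_Ax+P_Vh$, since $P_Ax\in A$). One also recalls the variational characterization $p=P_Ay\iff p\in A$ and $y-p\in V^\perp=(A-A)^\perp$.

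For \ref{p:0415a1}, affinity of $P_A$ is the above formula. For $P_AR_A=P_A$: since $R_Ax-P_Ax=(2P_Ax-x)-P_Ax=P_Ax-x\in V^\perp$ and $P_Ax\in A$, the characterization gives $P_A(R_Ax)=P_Ax$. The identity $T=(P_A+P_B-\Id)R_A$ then follows by direct expansion, $(P_A+P_B-\Id)R_Ax=P_AR_Ax+P_BR_Ax-R_Ax=P_Ax+P_BR_Ax-(2P_Ax-x)=x-P_Ax+P_BR_Ax=Tx$, using $P_AR_A=P_A$. Finally, $P_AT=P_AP_BR_A$ follows from the translation identity: since $Tx=x-P_Ax+P_BR_Ax$, one has $P_A(Tx)=P_Ax+P_V(P_BR_Ax-P_Ax)$, while $P_A(P_BR_Ax)=P_A\bigl(P_Ax+(P_BR_Ax-P_Ax)\bigr)=P_Ax+P_V(P_BR_Ax-P_Ax)$, and the two right-hand sides agree.

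For \ref{p:0415a2}, when $A$ is a linear subspace, $P_A$ is linear and self-adjoint (standard), and $\Id=P_A+P_{A^\perp}$, so $R_A=P_A-P_{A^\perp}$. Starting from the already-established $T=(P_A+P_B-\Id)R_A$, substitute $\Id=P_A+P_{A^\perp}$ to get $T=(P_B-P_{A^\perp})R_A$, and observe $P_AP_B-P_{A^\perp}(\Id-P_B)=(P_A+P_{A^\perp})P_B-P_{A^\perp}=P_B-P_{A^\perp}$, which yields \eqref{ep:0415a2}.

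As the statement itself anticipates, there is no genuine obstacle here: the whole proposition is a direct verification. The only point requiring a little care is the bookkeeping with the base point $a_0$ in \ref{p:0415a1} — specifically, making sure the translation identity $P_A(x+h)=P_Ax+P_Vh$ is applied with a legitimate base point (either $x$ itself or a point of $A$ such as $P_Ax$), rather than asserting linearity of $P_A$ when only affinity is available; and justifying $P_AR_A=P_A$ through the orthogonality characterization instead of a purely geometric appeal to reflection symmetry.
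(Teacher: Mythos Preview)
Your proof is correct and complete. The paper itself omits the proof entirely, remarking only that it is ``a direct verification''; your write-up supplies exactly that verification, with the only nontrivial bookkeeping being the affine (rather than linear) handling of $P_A$ in part~\ref{p:0415a1}, which you carry out carefully via the translation identity $P_A(x+h)=P_Ax+P_Vh$.
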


The next result will be used in Section~\ref{s:prodSpin} below to 
clarify the connection between the DRA and Spingarn's method. 

\begin{lemma}
\label{l:SpinDR}
Let $A$ be a linear subspace of $X$, let $B$ be a nonempty
closed convex subset of $X$,
let $(a,a^\perp)\in A\times A^\perp$, and 
set $(a_+,a_+^\perp) := (P_AP_B(a+a^\perp),
P_{A^\perp}(\Id-P_B)(a+a^\perp))\in A\times A^\perp$.
Then $T(a-a^\perp) = a_+-a_+^\perp$. 
\end{lemma}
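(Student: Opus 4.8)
The statement to prove is Lemma~\ref{l:SpinDR}: for a linear subspace $A$, a point $a-a^\perp$ with $(a,a^\perp)\in A\times A^\perp$, we must show $T(a-a^\perp)=a_+-a_+^\perp$ where $a_+=P_AP_B(a+a^\perp)$ and $a_+^\perp=P_{A^\perp}(\Id-P_B)(a+a^\perp)$. The natural route is to use the formula already recorded in Proposition~\ref{p:0415a}\ref{p:0415a2}, namely $T=\big(P_AP_B-P_{A^\perp}(\Id-P_B)\big)R_A$, and simply evaluate it at the point $z:=a-a^\perp$.

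Let me think about the pieces. Since $A$ is a linear subspace, $P_A$ is linear and symmetric, $P_{A^\perp}=\Id-P_A$, and for $z=a-a^\perp$ with $a\in A$, $a^\perp\in A^\perp$, we have $P_Az=a$ and $P_{A^\perp}z=-a^\perp$. Hence $R_Az=2P_Az-z=2a-(a-a^\perp)=a+a^\perp$. This is the key observation: the reflector $R_A$ turns $a-a^\perp$ into $a+a^\perp$, which is exactly the argument appearing inside $P_B$ in the definition of $(a_+,a_+^\perp)$. So I would first record $R_A(a-a^\perp)=a+a^\perp$ as a one-line computation.

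Then, applying Proposition~\ref{p:0415a}\ref{p:0415a2},
\[
T(a-a^\perp)=\big(P_AP_B-P_{A^\perp}(\Id-P_B)\big)R_A(a-a^\perp)
=P_AP_B(a+a^\perp)-P_{A^\perp}(\Id-P_B)(a+a^\perp)=a_+-a_+^\perp,
\]
using the definitions of $a_+$ and $a_+^\perp$. It remains only to check the bracketed claim $(a_+,a_+^\perp)\in A\times A^\perp$, which is immediate since $\ran P_A=A$ and $\ran P_{A^\perp}=A^\perp$; this also shows the decomposition $a_+-a_+^\perp$ is the genuine orthogonal splitting, so no cancellation is hidden. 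I do not expect any real obstacle here: the entire content has been front-loaded into Proposition~\ref{p:0415a}, and the only thing to verify independently is the reflector identity $R_A(a-a^\perp)=a+a^\perp$, which is a two-term computation. The proof is therefore essentially a substitution, and I would present it as such.
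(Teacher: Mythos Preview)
Your proposal is correct and follows essentially the same approach as the paper's own proof: both observe that $(a_+,a_+^\perp)\in A\times A^\perp$ is immediate, compute $R_A(a-a^\perp)=a+a^\perp$, and then invoke the identity \eqref{ep:0415a2} from Proposition~\ref{p:0415a}\ref{p:0415a2} to conclude.
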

\begin{proof}
Clearly, $a_+\in A$ and $a_+^\perp\in A^\perp$. 
Since 
$R_A(a-a^\perp) = (P_A-P_{A^\perp})(a-a^\perp) = a+ a^\perp$,
the conclusion follows from \eqref{ep:0415a2}.
\end{proof}

\section{The affine-polyhedral case with Slater's condition}

\label{s:main1}

In this section, we are able to state and prove
finite convergence of the DRA in the case where
$A$ is an affine subspace and $B$ is a polyhedral set
such that Slater's condition, $A\cap\inte B\neq\varnothing$,
is satisfied. We start by recalling our standing assumptions.
We assume that
\begin{empheq}[box=\mybluebox]{equation}
\text{$X$ is a finite-dimensional real Hilbert space,}
\end{empheq}
and that
\begin{empheq}[box=\mybluebox]{equation}
\text{$A$ and $B$ are closed convex subsets of $X$ such that $A\cap B \neq\varnothing$}.
\end{empheq}
The DRA is based on the operator
\begin{empheq}[box=\mybluebox]{equation}
T :=\Id -P_A +P_BR_A.
\end{empheq}
Given a starting point $z_0 \in X$, 
the DRA sequence $(z_n)_\nnn$ is generated by
\begin{subequations}
\label{e:DRAseqs}
\begin{empheq}[box=\mybluebox]{equation}
\label{e:DRAseqsT}
(\forall\nnn)\quad z_{n+1} :=Tz_n.
\end{empheq}
We also set 
\begin{empheq}[box=\mybluebox]{equation}
(\forall\nnn)\quad a_n :=P_Az_n, \quad r_n :=R_Az_n =2a_n -z_n.
\end{empheq}
\end{subequations}

We now state the basic convergence result for the DRA. 

\begin{fact}[convergence of the DRA]
\label{f:cvgFixT}
The DRA sequences \eqref{e:DRAseqs} satisfy
\begin{equation}
z_n \to z \in \Fix T =(A\cap B) +N_{A-B}(0) \quad\text{and}\quad a_n 
\to P_Az \in A\cap B.  \end{equation}
\end{fact}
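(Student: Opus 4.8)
The plan is to treat Fact~\ref{f:cvgFixT} as essentially a recollection of classical Douglas--Rachford theory and to assemble the statement from three standard ingredients: Fej\'er monotonicity of the governing sequence, the characterization of the fixed-point set, and the convergence of the shadow sequence. First I would recall that $T=\tfrac12(\Id+R_BR_A)$ is firmly nonexpansive (being an average of the identity and the nonexpansive operator $R_BR_A$), so $T$ is averaged; since $X$ is finite-dimensional and, under our standing assumption $A\cap B\neq\varnothing$, the set $\Fix T$ is nonempty, the Krasnosel'ski\u\i--Mann-type convergence theorem for averaged operators (e.g.\ \cite[Theorem~5.14]{BC11}) gives $z_n\to z$ for some $z\in\Fix T$. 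This is the part I expect to be routine, as it only invokes cited results.

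Next I would establish the identity $\Fix T=(A\cap B)+N_{A-B}(0)$. The inclusion-based argument is: $z\in\Fix T$ iff $P_Az=P_BR_Az$, i.e.\ iff there is a common point $p:=P_Az=P_B(2p-z)$; writing $z=p+(z-p)$ one checks $p\in A$, $p\in B$ (so $p\in A\cap B$), $z-p\in N_A(p)$ and $p-(2p-z)=z-p\in -N_B(p)$, whence $z-p\in N_A(p)\cap(-N_B(p))$. By Proposition~\ref{p:cone}\ref{p:cone0} applied at $a=b=p$, this normal-cone intersection equals $N_{A-B}(p-p)=N_{A-B}(0)$, giving $z\in (A\cap B)+N_{A-B}(0)$. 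Conversely, if $p\in A\cap B$ and $w\in N_{A-B}(0)=N_A(p)\cap(-N_B(p))$, one verifies directly that $z:=p+w$ satisfies $P_Az=p$ and $P_B(2p-z)=P_B(p-w)=p$, so $Tz=z$. (Alternatively, and more economically, one may simply cite \cite[Proposition~25.1(ii)]{BC11} or \cite{LM79} for $\Fix T = (A\cap B)+N_{A-B}(0)$, which is the cleanest route for a ``Fact''.)

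Finally, for the shadow sequence I would invoke the Lions--Mercier result \cite{LM79} (see also \cite[Theorem~25.6]{BC11}): since $z_n\to z\in\Fix T$ and $P_A$ is (firmly) nonexpansive, $a_n=P_Az_n\to P_Az$, and the standard argument shows $P_Az\in A\cap B$ --- indeed, with $p:=P_Az$ the fixed-point analysis above already identifies $P_Az$ as the common point $p\in A\cap B$. Continuity of $P_A$ makes the convergence $a_n\to P_Az$ immediate once $z_n\to z$ is known. The one genuine subtlety --- and the step I would flag as the ``main obstacle'' in a fully self-contained write-up --- is the nonemptiness of $\Fix T$: it requires knowing that a zero of the associated maximally monotone operator exists, i.e.\ that $A\cap B\neq\varnothing$ forces $\Fix T\neq\varnothing$, which follows from the $\Fix T=(A\cap B)+N_{A-B}(0)$ identity (the right-hand side contains $A\cap B\neq\varnothing$). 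Since every component here is classical, I would keep the proof to a sentence or two of pointers rather than reproving anything; the excerpt itself omits the proof of analogous ``Fact''-level statements, so matching that style is appropriate.
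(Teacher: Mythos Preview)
Your proposal is correct and in the same spirit as the paper's proof: the paper treats this as a pure citation, writing only ``Combine \cite[Corollary~3.9 and Theorem~3.13]{BCL04},'' which packages exactly the three ingredients you identified (convergence of the governing sequence, the $\Fix T=(A\cap B)+N_{A-B}(0)$ identity, and shadow convergence). Your alternative references (\cite{BC11}, \cite{LM79}) and the direct verification of the fixed-point identity via Proposition~\ref{p:cone}\ref{p:cone0} are equally valid; the only difference is that the paper leans on a single source \cite{BCL04} where these pieces are already assembled.
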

\begin{proof}
Combine \cite[Corollary~3.9 and Theorem~3.13]{BCL04}.
\end{proof}

Fact~\ref{f:cvgFixT} can be strengthened
when a constraint qualification is satisfied.

\begin{lemma}
\label{l:cvg}
Suppose that $0 \in \inte(A -B)$.
Then there exists a point $c\in A\cap B$ such that 
the following hold for the DRA sequences \eqref{e:DRAseqs}:
\begin{enumerate}
\item 
\label{l:cvg_gen}
$\reli(A)\cap\reli(B)\neq\varnothing$ and hence 
$(z_n)_\nnn$, $(a_n)_\nnn$ and $(r_n)_\nnn$ converge linearly to
$c$. 
\item 
\label{l:cvg_int}
If $c \in \inte B$, then the convergence of $(z_n)_\nnn$, $(a_n)_\nnn$ and $(r_n)_\nnn$ to $c$ is finite.
\end{enumerate}
\end{lemma}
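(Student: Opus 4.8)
The plan is to prove the two assertions of Lemma~\ref{l:cvg} in sequence, the first being essentially a consequence of already-established convergence rate theory and the second being the genuinely new content.

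For \ref{l:cvg_gen}: the hypothesis $0\in\inte(A-B)$ is a strong constraint qualification. First I would argue that $\reli A\cap\reli B\neq\varnothing$. Indeed, $0\in\inte(A-B)\subseteq\reli(A-B)=\reli A-\reli B$ by \cite[Corollary~6.6.2]{Roc70}, so there are $a\in\reli A$ and $b\in\reli B$ with $a=b$; call this common point $c$, which lies in $A\cap B$. Having $\reli A\cap\reli B\neq\varnothing$, the linear convergence of the DRA governing and shadow sequences follows from the existing literature on linear convergence of Douglas--Rachford under a transversality/regularity condition --- e.g., \cite{BNP15} or \cite{HL13} --- since $0\in\inte(A-B)$ forces the relevant (bounded) linear regularity of the pair $\{A,B\}$; by Fact~\ref{f:cvgFixT} the limit of $(a_n)$ is $P_Az\in A\cap B$, and $(z_n)$, $(r_n)=2(a_n)-(z_n)$ converge to the corresponding limits at the same linear rate. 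One must be slightly careful that the point $c$ produced by the relative-interior argument is the same as the limit $P_Az$; this is not needed --- we only need \emph{some} $c\in A\cap B$ with $c\in\inte B$ for part \ref{l:cvg_int}, so I would phrase \ref{l:cvg_gen} as producing the limit point $c:=\lim a_n=\lim z_n=\lim r_n\in A\cap B$ directly from Fact~\ref{f:cvgFixT} together with the linear-rate citation, and separately note $\reli A\cap\reli B\neq\varnothing$.

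For \ref{l:cvg_int}: suppose the limit $c=\lim a_n=\lim z_n=\lim r_n$ satisfies $c\in\inte B$. Since $c\in\inte B$, there is $\rho\in\RPP$ with $\ball{c}{\rho}\subseteq B$, so $P_B$ restricted to $\ball{c}{\rho}$ is the identity. Because $r_n=R_Az_n\to r:=\lim r_n$ and one checks (using $z_n\to z$, $a_n\to P_Az$, and the identity $c=P_Az$ forced by Fact~\ref{f:cvgFixT}) that in fact $r=c\in\inte B$: for $n$ large, $r_n\in\ball{c}{\rho}\subseteq B$, hence $P_Br_n=r_n$. But then $Tz_n=z_n-P_Az_n+P_BR_Az_n=z_n-a_n+r_n=z_n-a_n+(2a_n-z_n)=a_n$, i.e.\ $z_{n+1}=a_n=P_Az_n\in A$. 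So for all large $n$, $z_n\in A$, whence $a_n=P_Az_n=z_n$ and $r_n=R_Az_n=z_n$; that is, all three sequences agree and reduce to $(z_n)$. Moreover, once $z_n\in A$ with $r_n=z_n\in\ball{c}{\rho}\cap B$, we get $Tz_n=z_n$, so $z_n\in\Fix T$ and the iteration is stationary: $z_{n+1}=z_n$. Hence convergence is finite.

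The main obstacle is the bookkeeping that identifies the several limits and shows $r=\lim r_n$ lands in $\inte B$ rather than merely in $B$. The cleanest route: by Fact~\ref{f:cvgFixT}, $z_n\to z\in\Fix T=(A\cap B)+N_{A-B}(0)$, and under $0\in\inte(A-B)$ Proposition~\ref{p:cone}\ref{p:cone1} gives $N_{A-B}(0)=\{0\}$, so $\Fix T=A\cap B$ is a single point\footnote{$A\cap B$ need not be a singleton in general, but the argument only needs $z\in A\cap B$, giving $P_Az=z$ and $R_Az=z$.}; thus $z\in A\cap B$, $c:=P_Az=z$, and $r=R_Az=z=c$. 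If the hypothesis of \ref{l:cvg_int} is read as ``$c:=\lim a_n\in\inte B$'' then $c=z=r$ and the argument above applies verbatim. I expect the write-up to consist mostly of assembling Fact~\ref{f:cvgFixT}, the linear-convergence citation, Proposition~\ref{p:cone}\ref{p:cone1}, and the elementary computation $z_{n+1}=z_n-a_n+r_n$ to conclude finiteness.
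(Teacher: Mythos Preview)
Your proposal is correct and follows essentially the same route as the paper: both use Proposition~\ref{p:cone}\ref{p:cone1} to collapse $\Fix T$ to $A\cap B$, invoke Fact~\ref{f:cvgFixT} plus \cite[Corollary~6.6.2]{Roc70} and an external linear-convergence result for \ref{l:cvg_gen}, and for \ref{l:cvg_int} exploit $r_n\to c\in\inte B$ to get $P_Br_n=r_n$ and hence $z_{n+1}=a_n$. The paper's write-up is marginally cleaner in \ref{l:cvg_int}: it also notes $a_n\to c\in\inte B$ gives $a_n\in B$ eventually, so $z_{n+1}=a_n\in A\cap B=\Fix T$ in one stroke, avoiding your extra bookkeeping step (and you should drop the phrase ``$\Fix T=A\cap B$ is a single point,'' which your own footnote rightly retracts).
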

\begin{proof}
\ref{l:cvg_gen}: From $0 \in \inte(A -B)$, we have $N_{A-B}(0) =\{0\}$ 
due to Proposition~\ref{p:cone}. 
This gives $\Fix T =A\cap B$, and Fact~\ref{f:cvgFixT} implies
$z_n \to c \in A\cap B$ and $a_n \to P_Ac =c$. Then $r_n =2a_n -z_n \to c$.
Using again $0 \in \inte(A-B)$, 
\cite[Corollary~6.6.2]{Roc70} yields $0 \in \reli(A -B) =\reli A -\reli B$, 
and thus $\reli A\cap \reli B \neq\varnothing$.
Now the linear convergence follows from \cite[Theorem~4.14]{Pha14} 
or from \cite[Theorem~8.5(i)]{BNP15}. 

\ref{l:cvg_int}: 
Since $r_n \to c$ and $a_n \to c$ by \ref{l:cvg_gen}, 
there exists $n \in \NN$ such that
$r_n \in B$ and $a_n \in B$.
Then $P_Br_n =r_n$ and 
\begin{equation}
z_{n+1} =z_n -a_n +P_Br_n =z_n -a_n +r_n =a_n \in A\cap B = \Fix
T.
\end{equation} 
Hence $a_n=z_{n+1}=z_{n+2}=\cdots$ and we are done. 
\end{proof}

\begin{lemma}
Suppose that $A$ is a linear subspace.
Then the DRA sequences \eqref{e:DRAseqs} satisfy
\begin{equation}
\label{e:anan+}
a_n =P_Az_n =P_Ar_n \quad\text{and}\quad 
a_{n+1} =P_ATz_n = P_AP_BR_Az_n = P_AP_Br_n,
\end{equation}
and 
\begin{equation}
\label{e:aa}
(\forall\nnn)\quad a_n -a_{n+1} =P_A(r_n -P_Br_n). 
\end{equation}
\end{lemma}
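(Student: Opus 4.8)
The plan is to verify each identity directly from the definitions of the DRA sequences in \eqref{e:DRAseqs} together with the elementary properties of projections onto linear subspaces recorded in Proposition~\ref{p:0415a}. The only facts needed are: $P_A$ is a symmetric idempotent linear operator when $A$ is a linear subspace; $R_A = 2P_A - \Id = P_A - P_{A^\perp}$; and $P_A R_A = P_A$, equivalently $P_A P_{A^\perp} = 0$.

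First I would establish \eqref{e:anan+}. For the first identity, $a_n = P_A z_n$ is the definition, and $P_A r_n = P_A R_A z_n = P_A z_n = a_n$ by \eqref{ep:0415a1} (or directly: $P_A r_n = P_A(P_A z_n - P_{A^\perp} z_n) = P_A z_n - 0 = a_n$). For the second identity, $a_{n+1} = P_A z_{n+1} = P_A T z_n$ by definition; then $P_A T z_n = P_A P_B R_A z_n$ is exactly \eqref{ep:0415a1}; and $P_A P_B R_A z_n = P_A P_B r_n$ since $r_n = R_A z_n$. So this line is just unwinding definitions plus one application of Proposition~\ref{p:0415a}\ref{p:0415a1}.

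Next I would prove \eqref{e:aa}. Starting from the DRA recursion, $z_{n+1} = T z_n = z_n - P_A z_n + P_B R_A z_n = z_n - a_n + P_B r_n$. Apply $P_A$ to both sides, using linearity of $P_A$: $a_{n+1} = P_A z_n - P_A a_n + P_A P_B r_n = a_n - a_n + P_A P_B r_n$, where $P_A a_n = a_n$ because $a_n = P_A z_n \in A$ and $P_A$ is idempotent. Hence $a_{n+1} = P_A P_B r_n$, which recovers the second part of \eqref{e:anan+} a second way, and rearranging gives $a_n - a_{n+1} = a_n - P_A P_B r_n = P_A a_n - P_A P_B r_n = P_A(a_n - P_B r_n)$. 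Finally, replace $a_n$ by $P_A r_n$ (justified by \eqref{e:anan+}) to obtain $a_n - a_{n+1} = P_A(P_A r_n - P_B r_n) = P_A(r_n - P_B r_n) - P_A P_{A^\perp} r_n = P_A(r_n - P_B r_n)$, using $P_A = \Id - P_{A^\perp}$ on the range and $P_A P_{A^\perp} = 0$; alternatively, observe $P_A(P_A r_n) = P_A r_n = P_A(r_n)$ again by idempotency and the fact that $r_n - P_A r_n = -P_{A^\perp} r_n \in A^\perp = \ker P_A$, so $P_A(P_A r_n - P_B r_n) = P_A(r_n - P_B r_n)$.

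There is essentially no obstacle here: the statement is a bookkeeping lemma and every step is a one-line consequence of Proposition~\ref{p:0415a} and the definitions. The only point requiring the slightest care is the last substitution in \eqref{e:aa} — one must remember that $P_A r_n = P_A z_n = a_n$ so that $P_A(P_A r_n - P_B r_n)$ may be rewritten as $P_A(r_n - P_B r_n)$, which follows because $P_A r_n$ and $r_n$ differ by the element $P_{A^\perp} r_n$ of $\ker P_A$. I would present the argument as a short chain of equalities, citing \eqref{ep:0415a1} and Proposition~\ref{p:0415a}\ref{p:0415a2} where needed, and omit no step since the whole lemma is only a few lines.
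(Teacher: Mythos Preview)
Your proof is correct and follows essentially the same approach as the paper, which simply cites \eqref{ep:0415a1} for \eqref{e:anan+} and invokes linearity of $P_A$ together with \eqref{e:anan+} for \eqref{e:aa}. Your final substitution is slightly more roundabout than needed --- once you have $a_n = P_A r_n$ and $a_{n+1} = P_A P_B r_n$ from \eqref{e:anan+}, linearity of $P_A$ gives $a_n - a_{n+1} = P_A(r_n - P_B r_n)$ in one line --- but the argument is sound.
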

\begin{proof}
\eqref{e:anan+}: Clear from \eqref{ep:0415a1}.
\eqref{e:aa}: Use \eqref{e:anan+} and the linearity of $P_A$.
\end{proof}

\begin{lemma}
\label{l:afinite}
Suppose that $A$ is a linear subspace and that 
for the DRA sequences \eqref{e:DRAseqs} there exists $p \in \NN$ 
such that $a_p =a_{p+1} =c \in A\cap B$, 
and that there is a subset $N$ of $X$ such that $r_p -P_Br_p \in N$ 
and $A^\perp \cap N =\{0\}$. 
Then $(\forall n\geq p+1)$ $z_{n}=c$. 
\end{lemma}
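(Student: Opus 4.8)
The plan is to show that once $a_p = a_{p+1} = c$, the whole tail of the shadow sequence $(a_n)$ stays equal to $c$, and then deduce that $z_{n} = c$ for $n \geq p+1$. The first step is to produce a pointed closed convex cone $K$ that controls the increments $a_n - a_{n+1}$ so that Lemma~\ref{l:stop} applies. Since $A$ is a linear subspace, \eqref{e:aa} gives $a_n - a_{n+1} = P_A(r_n - P_B r_n)$, and $r_n - P_B r_n = r_n - P_B R_A z_n \in N_B(P_B r_n)$. The key observation is that $r_n - P_B r_n$ should lie in the cone $N$ (this will come from the polyhedrality analysis earlier in the paper, e.g.\ via Lemma~\ref{l:proj_poly} and Lemma~\ref{l:lpoly}, which tell us that near $c$ the relevant normal vectors are drawn from the finitely many generators $d_i$; I would invoke whatever hypothesis is available to guarantee $r_p - P_B r_p \in N$, and more generally for later indices that the increments stay in a fixed pointed cone). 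Writing $L := P_A|_{\cdot}$ restricted appropriately, the condition $A^\perp \cap N = \{0\}$ is exactly $\ker(P_A) \cap N = \{0\}$, so Lemma~\ref{l:pointed}\ref{l:pointed2} yields that $P_A(N)$, hence the cone $K$ generated by the increments, is a pointed closed convex cone.

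Next I would set up the hypotheses of Lemma~\ref{l:stop} for the sequence $(a_n)_\nnn$. By Fact~\ref{f:cvgFixT} we already know $a_n \to P_A z =: \bar a \in A \cap B$; but we also have $a_p = a_{p+1} = c$, and I claim $\bar a = c$. Indeed, the increments $a_n - a_{n+1}$ all lie in the pointed closed convex cone $K$ constructed above, so $a_p - \bar a = \sum_{k \geq p}(a_k - a_{k+1}) \in K$; combined with $a_p = a_{p+1}$ and a telescoping/pointedness argument (exactly as in the proof of Lemma~\ref{l:stop}), one forces $a_{p+1} = \bar a$, i.e.\ $\bar a = c$. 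Then Lemma~\ref{l:stop} applied with $a := c$ and the index $p$ gives $a_n = c$ for all $n \geq p$.

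Finally, from $a_n = c$ for all $n \geq p$ I would extract $z_n = c$ for $n \geq p+1$. Since $a_n = a_{n+1} = c$, \eqref{e:aa} gives $P_A(r_n - P_B r_n) = 0$, so $r_n - P_B r_n \in A^\perp \cap N = \{0\}$, whence $P_B r_n = r_n$. Then $z_{n+1} = z_n - a_n + P_B R_A z_n = z_n - a_n + r_n = z_n - a_n + (2a_n - z_n) = a_n = c$ for every $n \geq p$, which is the claim.

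The main obstacle I anticipate is not the pointedness/telescoping bookkeeping (that is handled cleanly by Lemmas~\ref{l:pointed} and~\ref{l:stop}), but rather establishing that the increments $r_n - P_B r_n$, for $n$ beyond $p$, all belong to the fixed set $N$ — i.e.\ that once the iterates reach the ball $\ball{c}{\ve}$ on which $B$ is polyhedral, the normal vectors $r_n - P_B r_n$ genuinely stay among the finitely many polyhedral generators so that $N$ (built from those generators) contains them. This requires knowing that $P_B r_n$ stays near $c$, which should follow from the linear convergence in Lemma~\ref{l:cvg}\ref{l:cvg_gen} together with $a_n \to c$; the argument will hinge on choosing $p$ large enough that $R_A z_n$ (equivalently $r_n$) lies in $P_B^{-1}(B \cap \ball{c}{\ve})$ for all $n \geq p$, so that Lemma~\ref{l:proj_poly} and the normal-cone description \eqref{e:0416e2} apply uniformly.
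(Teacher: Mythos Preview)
Your plan is significantly more elaborate than what the lemma requires, and the ``main obstacle'' you anticipate is an artifact of this detour rather than a genuine difficulty. The hypotheses of the lemma concern the \emph{single} index $p$: you are given $a_p=a_{p+1}=c\in A\cap B$ and $r_p-P_Br_p\in N$ with $A^\perp\cap N=\{0\}$. Nothing is asserted about $r_n-P_Br_n$ for $n>p$, and nothing needs to be. Your attempt to place all increments $a_n-a_{n+1}$ (for $n\ge p$) into a pointed closed convex cone $K$ and then invoke Lemma~\ref{l:stop} cannot be carried out from the stated hypotheses --- that machinery is used later, in the proof of Theorem~\ref{t:Bpoly}, where the polyhedral structure of $B$ and the convergence in Lemma~\ref{l:cvg} are available. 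Here they are not.

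The paper's proof is essentially your final paragraph, applied \emph{once} at $n=p$, followed by the observation that $c$ is a fixed point of $T$. Concretely: from $a_p-a_{p+1}=0$ and \eqref{e:aa} one gets $P_A(r_p-P_Br_p)=0$, i.e.\ $r_p-P_Br_p\in A^\perp$; combined with the hypothesis $r_p-P_Br_p\in N$ and $A^\perp\cap N=\{0\}$, this forces $P_Br_p=r_p$. Then
\[
z_{p+1}=z_p-a_p+P_Br_p=z_p-a_p+r_p=z_p-a_p+(2a_p-z_p)=a_p=c\in A\cap B\subseteq\Fix T,
\]
and hence $z_n=c$ for every $n\ge p+1$. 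No cone construction, no Lemma~\ref{l:pointed}, no Lemma~\ref{l:stop}, and no appeal to convergence of $(a_n)_\nnn$ are needed.
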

\begin{proof} 
Since $a_p -a_{p+1} =0$, \eqref{e:aa} 
implies $r_p -P_Br_p \in A^\perp \cap N=\{0\}$. 
Thus $P_Br_p =r_p$ and therefore
$z_{p+1} =z_p - a_p + r_p =a_p =c \in A\cap B \subseteq \Fix T$.
\end{proof}

\begin{lemma}
\label{l:prod}
Suppose that $A$ is a linear subspace and 
let $a \in A$. Then the DRA sequence \eqref{e:DRAseqsT} satisfies
\begin{subequations}
\begin{equation}
(\forall\nnn)\quad \scal{z_n -z_{n+1}}{z_{n+1} -a} =\scal{r_n -P_Br_n}{P_Br_n -a}, 
\end{equation}
and
\begin{equation}
(\forall\nnn)(\forall m\in\NN)
\quad \scal{z_{n+1}-z_{m+1}}{(z_n-z_{n+1})-(z_m-z_{m+1})} \geq 0.
\end{equation}
\end{subequations}
\end{lemma}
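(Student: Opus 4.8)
The plan is to derive both identities from the algebra already assembled in the excerpt, with the Douglas--Rachford operator written in the affine-subspace form $T=(P_AP_B-P_{A^\perp}(\Id-P_B))R_A$ from Proposition~\ref{p:0415a}\ref{p:0415a2} and from the product-type bilinearity of Corollary~\ref{c:partial}. First I would prove the scalar identity. Recall $r_n=R_Az_n$, and note $R_A=P_A-P_{A^\perp}$ and $R_A^2=\Id$, so $z_n=R_Ar_n=P_Ar_n-P_{A^\perp}r_n$. By \eqref{e:anan+}, $a_n=P_Ar_n$ and $a_{n+1}=P_AP_Br_n$; also $z_{n+1}=Tz_n=P_AP_Br_n-P_{A^\perp}(\Id-P_B)r_n=P_AP_Br_n-P_{A^\perp}r_n+P_{A^\perp}P_Br_n$. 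Hence
\begin{equation}
z_n-z_{n+1}=P_Ar_n-P_AP_Br_n-P_{A^\perp}P_Br_n=P_A(r_n-P_Br_n)-P_{A^\perp}P_Br_n,
\end{equation}
which is exactly $(P_Ay-P_{A^\perp}x)$ with $y:=r_n-P_Br_n$ and $x:=P_Br_n$. Meanwhile, since $a\in A$ we have $P_Aa=a$, $P_{A^\perp}a=0$, so
\begin{equation}
z_{n+1}-a=P_AP_Br_n-a-P_{A^\perp}r_n+P_{A^\perp}P_Br_n=P_A(P_Br_n-a)-P_{A^\perp}(r_n-P_Br_n),
\end{equation}
which is $(P_Ax'-P_{A^\perp}y')$ with $x':=P_Br_n-a$ and $y':=r_n-P_Br_n=y$. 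Applying Corollary~\ref{c:partial}(i) with the pair $(x,y)$ and the pair $(x',y')$ — being careful to match the slots, so that $\scal{P_Ay-P_{A^\perp}x}{P_Ax'-P_{A^\perp}y'}=\scal{y}{x'}$ — gives $\scal{z_n-z_{n+1}}{z_{n+1}-a}=\scal{r_n-P_Br_n}{P_Br_n-a}$, as claimed. (I would double-check the exact placement of $(x,y)$ versus $(x',y')$ in Corollary~\ref{c:partial}; this bookkeeping is the one place an error could slip in, so it warrants an explicit line in the write-up.)

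For the second, monotonicity-type inequality, the quickest route is to invoke Corollary~\ref{c:partial}(ii) together with the observation that the map $z\mapsto(z-Tz,Tz)$ factors through the partial-inverse-type linear map of Corollary~\ref{c:partial} applied to $(P_Br_n,r_n-P_Br_n)$, i.e.\ essentially to the graph of the (monotone) normal-cone operator $N_B$. Concretely, write $u_n:=z_n-z_{n+1}=P_Ay_n-P_{A^\perp}x_n$ and note $z_{n+1}=P_Ax_n'-P_{A^\perp}y_n$ where, as above, $x_n=P_Br_n$, $y_n=r_n-P_Br_n\in N_B(P_Br_n)$, and $x_n'=x_n-a_n$; actually it is cleaner to absorb the constant and simply set $x_n:=P_Br_n$, $y_n:=r_n-P_Br_n$, and observe $z_{n+1}=P_A x_n - P_{A^\perp} y_n$ (taking $a=0$), $z_n-z_{n+1}=P_A y_n - P_{A^\perp} x_n$. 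Then Corollary~\ref{c:partial}(ii) with the pairs $(x_n,y_n)$ and $(x_m,y_m)$ yields
\begin{equation}
\scal{(z_n-z_{n+1})-(z_m-z_{m+1})}{z_{n+1}-z_{m+1}}=\scal{y_n-y_m}{x_n-x_m}.
\end{equation}
The right-hand side is $\scal{(r_n-P_Br_n)-(r_m-P_Br_m)}{P_Br_n-P_Br_m}\geq 0$ because $P_B$ is firmly nonexpansive: $\scal{P_Br_n-P_Br_m}{(\Id-P_B)r_n-(\Id-P_B)r_m}\geq 0$ for all $r_n,r_m$ (equivalently, because $r-P_Br\in N_B(P_Br)$ and $N_B$ is monotone). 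This gives the desired nonnegativity.

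The main obstacle — such as it is — is purely organizational rather than deep: getting the slot-matching in Corollary~\ref{c:partial} exactly right so that the cross-inner-products collapse to $\scal{y}{x}$ with the correct arguments, and checking that the constant $a\in A$ is handled consistently in part~(i) while it plays no role in part~(ii). There is no genuine analytic difficulty; once the two vectors $z_n-z_{n+1}$ and $z_{n+1}-a$ (resp.\ $z_{n+1}$) are each rewritten in the canonical form $P_A(\cdot)-P_{A^\perp}(\cdot)$ coming from the identity $\Id=R_A^2$ and $R_A=P_A-P_{A^\perp}$, both conclusions are immediate consequences of Corollary~\ref{c:partial} and firm nonexpansiveness of $P_B$. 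I would therefore present the proof as: (1) rewrite $z_n$, $z_{n+1}$, $z_n-z_{n+1}$ via $R_A$ and \eqref{e:anan+}; (2) apply Corollary~\ref{c:partial}(i) for the scalar identity; (3) apply Corollary~\ref{c:partial}(ii) plus monotonicity of $N_B$ (firm nonexpansiveness of $P_B$) for the inequality.
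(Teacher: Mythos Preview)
Your proposal is correct and follows essentially the same route as the paper: rewrite $z_n-z_{n+1}$ and $z_{n+1}-a$ in the form $P_A(\cdot)-P_{A^\perp}(\cdot)$ via \eqref{e:anan+} and \eqref{ep:0415a2}, then apply Corollary~\ref{c:partial} together with monotonicity of $N_B$ (equivalently firm nonexpansiveness of $P_B$). The slot-matching worry you flag disappears once you observe that $P_{A^\perp}(P_Br_n)=P_{A^\perp}(P_Br_n-a)$ since $a\in A$, so both vectors are expressible via the \emph{same} pair $(x,y)=(P_Br_n-a,\,r_n-P_Br_n)$ and Corollary~\ref{c:partial}(i) applies directly---this is exactly how the paper handles it.
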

\begin{proof}
Let $n\in\NN$. 
Using \eqref{e:anan+}, we find that 
\begin{equation}
z_n -z_{n+1} =a_n -P_Br_n =P_Ar_n -P_Br_n 
=P_A(r_n -P_Br_n) -P_{A^\perp}(P_Br_n -a).
\end{equation}
Next, \eqref{ep:0415a2} yields
\begin{equation}
z_{n+1} -a =P_AP_Br_n -P_{A^\perp}(r_n -P_Br_n) -a 
=P_A(P_Br_n -a) -P_{A^\perp}(r_n -P_Br_n).
\end{equation}
Moreover, $r_n -P_Br_n \in N_B(P_Br_n) =N_{B-a}(P_Br_n -a)$
and $N_{B-a}$ is a monotone operator. 
The result thus follows from 
Corollary~\ref{c:partial}. 
\end{proof}

\begin{lemma}
\label{l:lfun}
Suppose that $A$ is a linear subspace and that 
the DRA sequences \eqref{e:DRAseqs} satisfy
\begin{equation}
\label{e:scal0}
z_n \to a \in A\quad\text{and}\quad
(\forall\nnn)\quad \scal{r_n -P_Br_n}{a -P_Br_n} =0.
\end{equation} 
Then there is no linear functional $f \colon X \to \RR$ such that
\begin{equation}
\label{e:fdecrease}
(\forall\nnn)\quad f(z_n) >f(z_{n+1}).
\end{equation}
\end{lemma}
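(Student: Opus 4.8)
The plan is to argue by contradiction: suppose such a linear $f$ exists. Then $(z_n)_\nnn$ converges to $a$ and satisfies the strict-decrease hypothesis \eqref{e:fdecrease}, so Lemma~\ref{l:plfun} applies and produces $n_0\in\NN\smallsetminus\{0\}$ together with convex weights $(\mu_1,\dots,\mu_{n_0})\in\RP^{n_0}$, $\sum_k\mu_k=1$, for which
\begin{equation*}
\scal{\sum_{k=1}^{n_0}\mu_k(z_{k-1}-z_k)}{\sum_{k=1}^{n_0}\mu_k(z_k-a)}>0.
\end{equation*}
The idea is to show that this quantity is in fact $\leq 0$ (indeed $=0$), which is the contradiction. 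For this I would combine the two displayed identities/inequalities in Lemma~\ref{l:prod} with the orthogonality hypothesis in \eqref{e:scal0}.

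First I would expand the bilinear form. Write $u_k := z_{k-1}-z_k$ and $v_k := z_k-a$. The expression is $\sum_{j,k}\mu_j\mu_k\scal{u_j}{v_k}$, which I would split into the diagonal part $\sum_k\mu_k^2\scal{u_k}{v_k}$ and the off-diagonal part $\sum_{j\neq k}\mu_j\mu_k\scal{u_j}{v_k}$. For the diagonal terms, Lemma~\ref{l:prod} (first identity, with $a$ the limit point, which lies in $A$ by \eqref{e:scal0}) gives $\scal{u_k}{v_k}=\scal{z_{k-1}-z_k}{z_k-a}=\scal{r_{k-1}-P_Br_{k-1}}{P_Br_{k-1}-a}$; and by \eqref{e:scal0} this last inner product is $-\scal{r_{k-1}-P_Br_{k-1}}{a-P_Br_{k-1}}=0$. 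So every diagonal term vanishes. For the off-diagonal terms, note $v_k = z_k-a = (z_k-z_{k+1})+(z_{k+1}-a) = -u_{k+1}+v_{k+1}$; iterating, $v_k$ is (formally) $\sum_{m\geq k+1}u_m$ — but I should stay finite and instead pair indices directly. The cleaner route: for $j\neq k$, use the monotonicity-type inequality, the second assertion of Lemma~\ref{l:prod}, which says $\scal{z_{k}-z_{m}}{(z_{k-1}-z_{k})-(z_{m-1}-z_{m})}\geq 0$ after reindexing, i.e. $\scal{v_k-v_m}{u_k-u_m}\geq 0$, equivalently $\scal{u_j}{v_k}+\scal{u_k}{v_j}\leq \scal{u_j}{v_j}+\scal{u_k}{v_k}=0$ for each pair $\{j,k\}$. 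Summing $\mu_j\mu_k$ times this over unordered pairs shows the off-diagonal part is $\leq 0$. Hence the whole bilinear form is $\leq 0$, contradicting the strict inequality from Lemma~\ref{l:plfun}.

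I would present the monotonicity step carefully, since that is the main obstacle: one must get the index bookkeeping in Lemma~\ref{l:prod}'s second display to line up so that it yields precisely $\scal{v_j-v_k}{u_j-u_k}\geq 0$, and then recognize (using that the diagonal inner products are zero) that this is exactly the statement $\scal{u_j}{v_k}+\scal{u_k}{v_j}\leq 0$. Once that symmetric pairwise bound is in hand, the sum $\sum_{j,k}\mu_j\mu_k\scal{u_j}{v_k} = \tfrac12\sum_{j,k}\mu_j\mu_k\big(\scal{u_j}{v_k}+\scal{u_k}{v_j}\big) \leq 0$ follows mechanically, and combined with the vanishing diagonal this contradicts \eqref{e:pscal}. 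An alternative, perhaps slicker, presentation avoids splitting into diagonal and off-diagonal: set $s := \sum_k\mu_k u_k$ and $t := \sum_k\mu_k v_k$ and use a "Jensen/telescoping" manipulation together with the firm-nonexpansiveness-flavored inequality of Lemma~\ref{l:prod} to show $\scal{s}{t}\leq \sum_k\mu_k\scal{u_k}{v_k}=0$ directly; I would try this first and fall back to the pairwise argument if the telescoping is not clean.

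\begin{proof}
Suppose to the contrary that there is a linear $f\colon X\to\RR$ with $f(z_n)>f(z_{n+1})$ for all $\nnn$. Since $z_n\to a$, Lemma~\ref{l:plfun} yields $n_0\in\NN\smallsetminus\{0\}$ and $(\mu_1,\dots,\mu_{n_0})\in\RP^{n_0}$ with $\sum_{k=1}^{n_0}\mu_k=1$ and
\begin{equation}
\label{e:contra}
\scal{\sum_{k=1}^{n_0}\mu_k(z_{k-1}-z_k)}{\sum_{k=1}^{n_0}\mu_k(z_k-a)}>0.
\end{equation}
For $k\in\{1,\dots,n_0\}$ put $u_k:=z_{k-1}-z_k$ and $v_k:=z_k-a$. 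Since $a\in A$, Lemma~\ref{l:prod} gives
\begin{equation}
\scal{u_k}{v_k}=\scal{z_{k-1}-z_k}{z_k-a}=\scal{r_{k-1}-P_Br_{k-1}}{P_Br_{k-1}-a}=-\scal{r_{k-1}-P_Br_{k-1}}{a-P_Br_{k-1}}=0,
\end{equation}
where the last equality uses \eqref{e:scal0}. Moreover, applying the second assertion of Lemma~\ref{l:prod} with the indices $k-1$ and $j-1$ (for $j,k\in\{1,\dots,n_0\}$) we obtain
\begin{equation}
\scal{(z_k-a)-(z_j-a)}{(z_{k-1}-z_k)-(z_{j-1}-z_j)}=\scal{v_k-v_j}{u_k-u_j}\geq 0,
\end{equation}
that is, $\scal{u_k}{v_j}+\scal{u_j}{v_k}\leq\scal{u_k}{v_k}+\scal{u_j}{v_j}=0$. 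Therefore
\begin{equation}
\scal{\sum_{k=1}^{n_0}\mu_ku_k}{\sum_{k=1}^{n_0}\mu_kv_k}=\sum_{j,k=1}^{n_0}\mu_j\mu_k\scal{u_j}{v_k}=\frac12\sum_{j,k=1}^{n_0}\mu_j\mu_k\big(\scal{u_j}{v_k}+\scal{u_k}{v_j}\big)\leq 0,
\end{equation}
which contradicts \eqref{e:contra}. Hence no such $f$ exists.
\end{proof}
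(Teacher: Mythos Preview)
Your proof is correct and follows essentially the same route as the paper's: both argue by contradiction, invoke Lemma~\ref{l:plfun} to obtain the strictly positive bilinear form, use Lemma~\ref{l:prod} together with \eqref{e:scal0} to show $\scal{u_k}{v_k}=0$ and $\scal{u_k-u_j}{v_k-v_j}\geq 0$, and then conclude that the bilinear form is $\leq 0$. The only cosmetic difference is in the final step: the paper packages the identity $\scal{\sum_k\mu_k u_k}{\sum_k\mu_k v_k}=\sum_k\mu_k\scal{u_k}{v_k}-\tfrac{1}{2}\sum_{j,k}\mu_j\mu_k\scal{u_k-u_j}{v_k-v_j}$ as a citation to \cite[Lemma~2.13(i)]{BC11}, whereas you unroll the same computation by hand via the symmetrization $\sum_{j,k}\mu_j\mu_k\scal{u_j}{v_k}=\tfrac{1}{2}\sum_{j,k}\mu_j\mu_k(\scal{u_j}{v_k}+\scal{u_k}{v_j})$.
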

\begin{proof}
Suppose to the contrary that there exists a linear function $f
\colon X \to \RR$ satisfying \eqref{e:fdecrease}.
Now set 
\begin{equation} 
(\forall n \in \NN\smallsetminus \{0\})\quad y_n
:=z_{n-1} -z_n \quad\text{and}\quad w_n :=z_n -a.  
\end{equation}
On the one hand, Lemma~\ref{l:plfun} yields
$n_0 \in \NN\smallsetminus \{0\}$ and 
$(\mu_1, \dots, \mu_{n_0}) \in \RP^{n_0}$ such that 
\begin{equation}
\label{e:SyiSwi}
\sum_{k=1}^{n_0}\mu_k=1\quad\text{and}\quad
\scal{\sum_{k=1}^{n_0} \mu_ky_k}{\sum_{k=1}^{n_0} \mu_kw_k} >0.
\end{equation}
On the other hand, Lemma~\ref{l:prod} and \eqref{e:scal0} yield
\begin{equation}
(\forall k \in \NN\smallsetminus \{0\})(\forall j \in \NN\smallsetminus \{0\})\quad \scal{y_k}{w_k} =0 \quad\text{and}\quad \scal{y_k -y_j}{w_k -w_j} \geq 0;
\end{equation}
consequently, with the help of \cite[Lemma~2.13(i)]{BC11}, 
\begin{equation}
\label{e:mom-1}
\scal{\sum_{k=1}^{n_0} \mu_k y_k}{\sum_{k=1}^{n_0} \mu_k w_k}
=\sum_{k=1}^{n_0} \mu_k\scal{y_k}{w_k}
-\tfrac{1}{2}\sum_{k=1}^{n_0}\sum_{j=1}^{n_0} \mu_k\mu_j\scal{y_k -y_j}{w_k -w_j} \leq 0.
\end{equation}
Comparing \eqref{e:SyiSwi} with \eqref{e:mom-1}, we arrive at the
desired contradiction. 
\end{proof}

We are now ready for our first main result concerning the finite
convergence of the DRA. 

\begin{theorem}[finite convergence of DRA in the
affine-polyhedral case]
\label{t:Bpoly}
Suppose that $A$ is a affine subspace,
that $B$ is polyhedral at every point in $A\cap \bd B$,
and  that \emph{Slater's condition}
\begin{equation}
\label{e:AintB}
A\cap \inte B \neq\varnothing
\end{equation}
holds.
Then the DRA sequences \eqref{e:DRAseqs}
converge in \emph{finitely many steps} to a point in $A\cap B$.
\end{theorem}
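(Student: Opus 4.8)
The strategy is to reduce to the linear-subspace case, where all the technical machinery (Lemmas~\ref{l:afinite}, \ref{l:prod}, \ref{l:lfun}) is available, and then argue by contradiction using the linear functional obstruction from Lemma~\ref{l:lfun}. First I would handle the reduction: since $A$ is an affine subspace, translate so that $0\in A$, making $A$ a linear subspace; Slater's condition \eqref{e:AintB} and the hypothesis that $B$ is polyhedral at every point of $A\cap\bd B$ are preserved under translation. Next, observe that Slater's condition together with Corollary~\ref{c:CQ}\ref{c:CQ_AintB} gives $0\in\inte(A-B)$, so Lemma~\ref{l:cvg}\ref{l:cvg_gen} applies: the DRA sequences $(z_n)$, $(a_n)$, $(r_n)$ all converge (linearly) to a common point $c\in A\cap B$, and $\Fix T=A\cap B$.

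\textbf{Using local polyhedrality.} If $c\in\inte B$, then Lemma~\ref{l:cvg}\ref{l:cvg_int} already gives finite convergence, so assume $c\in\bd B$, hence $c\in A\cap\bd B$. By hypothesis $B$ is polyhedral at $c$, so Lemma~\ref{l:lpoly} (or more directly Lemma~\ref{l:proj_poly}) furnishes $\ve\in\RPP$ such that for every $x$ with $P_Bx\in\ball{c}{\ve}$ we have $\scal{x-P_Bx}{c-P_Bx}=0$ and $x-P_Bx\in N_B(c)$. Since $r_n\to c$ and, by Fact~\ref{f:cvgFixT}-type reasoning, $P_Br_n\to P_Bc=c$, there is an index $p$ beyond which $P_Br_n\in\ball{c}{\ve}$ for all $n\ge p$. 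Applying this with $x=r_n$: for all $n\ge p$, $\scal{r_n-P_Br_n}{c-P_Br_n}=0$ and $r_n-P_Br_n\in N_B(c)$.

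\textbf{Deriving finite convergence.} Consider the tail sequence $(z_n)_{n\ge p}$. From $\scal{r_n-P_Br_n}{c-P_Br_n}=0$ and Lemma~\ref{l:lfun} (applied with $a=c$ to the shifted sequence, noting $z_n\to c\in A$), there is no linear functional $f$ with $f(z_n)>f(z_{n+1})$ for all $n\ge p$. The plan is now to exploit the containments $r_n-P_Br_n\in N_B(c)$ to show the increments $z_n-z_{n+1}$ lie in a fixed pointed closed convex cone $K$. Using the identity from Lemma~\ref{l:prod}'s proof, $z_n-z_{n+1}=P_A(r_n-P_Br_n)-P_{A^\perp}(P_Br_n-c)$; combined with $\scal{r_n-P_Br_n}{c-P_Br_n}=0$ one checks $\|z_n-z_{n+1}\|^2$ relates to these orthogonal pieces, and that $z_n-z_{n+1}\in (P_A-P_{A^\perp})N_B(c)=R_A N_B(c)$, which by Lemma~\ref{l:pointed}\ref{l:pointed2} (with $L=R_A$, noting $\ker R_A=\{0\}$) is a pointed closed convex cone $K$. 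If some increment $z_n-z_{n+1}$ were nonzero, pick a linear functional separating it strictly from $-K\smallsetminus\{0\}$ (Lemma~\ref{l:cvxcone}-style), forcing $f(z_k)>f(z_{k+1})$ for all large $k$ unless increments vanish — contradicting Lemma~\ref{l:lfun}. Hence $z_n-z_{n+1}=0$ eventually, i.e.\ finite convergence. The alternative, cleaner route — which I expect is the intended one — is: the relation $\scal{r_n-P_Br_n}{c-P_Br_n}=0$ forces (via an expansion of $\|a_n-a_{n+1}\|$ or the partial-inverse identity) that $(a_n)$ is eventually constant, then invoke Lemma~\ref{l:afinite} with $N=N_B(c)$ and $A^\perp\cap N_B(c)=\{0\}$ (which holds by Corollary~\ref{c:CQ}\ref{c:CQ_ApNB} since $0\in\inte(A-B)$).

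\textbf{Main obstacle.} The delicate point is showing that $(a_n)$ becomes \emph{exactly} constant (not merely Cauchy) after finitely many steps — the convergence results only give asymptotic behavior. This is precisely where the ``no decreasing linear functional'' conclusion of Lemma~\ref{l:lfun} must be leveraged together with the pointed-cone structure: one shows the increments $a_n-a_{n+1}=P_A(r_n-P_Br_n)$ lie in the pointed cone $P_A(N_B(c))$ (pointed because $A^\perp\cap N_B(c)=\{0\}$ means $\ker P_A|_{\cone N_B(c)}=\{0\}$, via Lemma~\ref{l:pointed}\ref{l:pointed2}), and then Lemma~\ref{l:stop}-type reasoning forces them to vanish. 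Assembling these pieces — local polyhedrality to get the orthogonality relation, the constraint qualification to get pointedness, and the linear-functional lemma to rule out infinitely many nonzero increments — completes the proof, after which untranslating returns the result for the original affine $A$.
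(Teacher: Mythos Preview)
Your skeleton is right—the translation to a linear subspace, the use of Corollary~\ref{c:CQ}\ref{c:CQ_AintB} and Lemma~\ref{l:cvg}, the application of Lemma~\ref{l:proj_poly} to obtain $\scal{r_n-P_Br_n}{c-P_Br_n}=0$ and $r_n-P_Br_n\in N_B(c)$ eventually, and the identification of $K:=P_A(N_B(c))$ as a pointed closed convex cone all match the paper (you should also check that $N_B(c)$ itself is pointed before invoking Lemma~\ref{l:pointed}\ref{l:pointed2}; this comes from $\inte B\neq\varnothing$ via Proposition~\ref{p:cone}\ref{p:cone2}). The genuine gap is the step you correctly flag as the main obstacle: you never construct the linear functional needed to contradict Lemma~\ref{l:lfun}, and neither suggested route works. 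Your Route~A claim that $z_n-z_{n+1}\in R_A N_B(c)$ is unjustified: from the proof of Lemma~\ref{l:prod}, $z_n-z_{n+1}=P_A(r_n-P_Br_n)-P_{A^\perp}(P_Br_n-c)$, and while $r_n-P_Br_n\in N_B(c)$, the vector $P_Br_n-c$ lies in $B-c$, not in $N_B(c)$, so there is no common $w\in N_B(c)$ with $z_n-z_{n+1}=R_Aw$. Your Route~B and the ``Lemma~\ref{l:stop}-type reasoning'' are circular, since Lemma~\ref{l:stop} \emph{assumes} $a_p=c$ for some $p$.

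The missing idea is the following. Because $a_n-a_{n+1}=P_A(r_n-P_Br_n)\in K$ and $a_n\to c$, telescoping yields $a_n-c\in K$ for all large $n$. If $a_n\neq c$ for all large $n$, Lemma~\ref{l:cvxcone} applied to the pointed cone $K$ gives $v\in\reli K\cap\reli K^\oplus$ with $\scal{v}{a_n-c}>0$. Now write $v=P_Au$ with $u\in N_B(c)$ and set $f(x):=\scal{u}{x}$. Using $z_n-z_{n+1}=a_n-P_Br_n$ and $a_n-c\in A$ one gets
\[
f(z_n)-f(z_{n+1})=\scal{u}{a_n-c}-\scal{u}{P_Br_n-c}=\scal{v}{a_n-c}-\scal{u}{P_Br_n-c}>0,
\]
since the first term is strictly positive and the second is $\leq 0$ because $u\in N_B(c)$ and $P_Br_n\in B$. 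This contradicts Lemma~\ref{l:lfun}, so some $a_p=c$, and then Lemma~\ref{l:stop} and Lemma~\ref{l:afinite} (with $N=N_B(c)$) finish. The point you missed is that the functional must come from a vector $u\in N_B(c)$, not merely from $K$: membership in $N_B(c)$ is precisely what controls the $P_Br_n-c$ term.
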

\begin{proof}
After translating the sets if necessary, we can and do assume
that $A$ is a linear subspace of $X$.
By Corollary~\ref{c:CQ}\ref{c:CQ_AintB}, \eqref{e:AintB} yields
\begin{equation}
0 \in \inte(A -B) \quad\text{and}\quad \inte B \neq\varnothing.
\end{equation}
Lemma~\ref{l:cvg}\ref{l:cvg_gen} thus implies that 
$(z_n)_\nnn$, $(a_n)_\nnn$ and $(r_n)_\nnn$ converge linearly to a point 
$c \in A\cap B$.
Since $P_B$ is (firmly) nonexpansive, it also follows that
$(P_Br_n)_\nnn$ converges linearly to $c$.
Since $B$ is clearly polyhedral at every point in $\inte B $ it
follows from the hypothesis that
$B$ is polyhedral at $c$.
Lemma~\ref{l:proj_poly} guarantees the existence of 
$n_0 \in \NN$ such that 
\begin{equation}
\label{e:scal=0}
(\forall n \geq n_0)\quad \scal{r_n -P_Br_n}{c -P_Br_n} =0,
\end{equation}
and
\begin{equation}
\label{e:xnPBxna}
(\forall n \geq n_0)\quad r_n -P_Br_n \in N_B(c).
\end{equation}
Because of $\inte B \neq\varnothing$ and $c\in B$, 
Proposition~\ref{p:cone}\ref{p:cone2} yields 
$N_B(c)\cap (-N_B(c)) =\{0\}$.
Hence $N_B(c)$ is a nonempty pointed closed convex cone.
Using again $0 \in \inte(A -B)$, 
Corollary~\ref{c:CQ}\ref{c:CQ_ApNB} gives
\begin{equation}
\label{e:0417a}
\ker P_A \cap N_B(c) =A^\perp \cap N_B(c) =\{0\}.
\end{equation}
In view of Lemma~\ref{l:pointed}\ref{l:pointed2},
\begin{equation}
\label{e:0417b}
K :=P_A\big(N_B(c)\big) \text{ is a nonempty pointed closed
convex cone.}
\end{equation}
Combining \eqref{e:aa} and \eqref{e:xnPBxna}, we obtain
\begin{equation}
\label{e:aaK'}
(\forall n \geq n_0)\quad a_n -a_{n+1} \in K.
\end{equation}
Since $a_n \to c$ and $K$ is a closed convex cone, we have 
\begin{equation}
\label{e:asK}
(\forall n \geq n_0)\quad a_n -c =\sum_{k=n}^\infty (a_k  -a_{k+1}) \in K.
\end{equation} 
We now consider two cases.

\emph{Case~1:} $(\exi p\geq n_0)$ $a_p=c$. \\
Using \eqref{e:0417b} and 
\eqref{e:aaK'}, we deduce from Lemma~\ref{l:stop} that $a_p =a_{p+1} =c$.
Now \eqref{e:xnPBxna}, \eqref{e:0417a}, and Lemma~\ref{l:afinite}
yield $(\forall n\geq p+1)$ $z_n=c$ as required.

\emph{Case~2:} $(\forall n\geq n_0)$ $a_n\neq c$. \\
By \eqref{e:asK}, $(\forall n\geq n_0)$ $a_n -c \in K\smallsetminus\{0\}$.
Since $K$ is pointed (see \eqref{e:0417b}),
Lemma~\ref{l:cvxcone} yields
$v \in \reli K\cap \reli K^\oplus\subseteq K$ such that
\begin{equation}
\label{e:0417c}
(\forall n \geq n_0)\quad \scal{v}{a_n -c} >0.
\end{equation}
Recalling \eqref{e:0417b}, we get $u\in N_B(c)$ such that 
$v=P_Au$. 
Clearly, 
$(\forall n \geq n_0)$ $\scal{u}{a_n -c} 
= \scal{u}{P_A(a_n -c)} = \scal{P_Au}{a_n -c} = \scal{v}{a_n -c}$.
It follows from \eqref{e:0417c} that 
\begin{equation}
\label{e:scal>0}
(\forall n \geq n_0)\quad \scal{u}{a_n -c} > 0. 
\end{equation}
Since $u \in N_B(c)$ we also have 
\begin{equation}
\label{e:scal'}
(\forall n \geq n_0)\quad \scal{u}{P_Br_n -c} \leq 0.
\end{equation}
Now define a linear functional on $X$ by 
\begin{equation}
f \colon X \to \RR \colon x \mapsto \scal{u}{x}.
\end{equation}
In view of \eqref{e:scal>0} with \eqref{e:scal'}, we obtain
$(\forall n\geq n_0)$
$f(z_n -z_{n+1}) =f(a_n -P_Br_n) =f(a_n -c) -f(P_Br_n -c) 
=\scal{u}{a_n -c} -\scal{u}{P_Br_n -c} >0$.
Therefore, 
\begin{equation}
(\forall n \geq n_0)\quad f(z_n) >f(z_{n+1}).
\end{equation}
However, this and \eqref{e:scal0} together contradict
Lemma~\ref{l:lfun} (applied to $(z_n)_{n \geq n_0}$).
We deduce that \emph{Case~2} never occurs which completes the
proof of the theorem.
\end{proof}

\begin{remark}
\label{r:Bpoly}
Some comments on Theorem~\ref{t:Bpoly} are in order.
\begin{enumerate}
\item
Suppose we replace the Slater condition ``$A\cap\inte
B\neq\varnothing$'' by ``$A\cap \reli B \neq \varnothing$''.
Then one still obtains \emph{linear convergence}
(see \cite[Theorem~4.14]{Pha14} or \cite[Theorem~8.5(i)]{BNP15});
however, 
\emph{finite convergence fails} in general: indeed,
$B$ can chosen to be an affine subspace 
(see \cite[Section~5]{BBNPW14}). 
\item
Under the assumptions of Theorem~\ref{t:Bpoly},
Rockafellar's condition \eqref{e:R} is only applicable when
both 
$A=\{\bar{a}\}$ and $\bar{a}\in \inte B$ hold.
There are many examples where this condition is violated yet
our condition is applicable (see, e.g., the scenario in the
following item). 
\item 
Suppose that $X =\RR^2$, that $A =\RR\times \{0\}$ and 
that $B =\epi f$, where $f\colon X\to \RR\colon x\mapsto |x|-1$.
It is clear that $B$ is polyhedral and that $A\cap\inte
B\neq\varnothing$. 
Define $(\forall\ve\in\RPP)$
$z_\ve := (1 +\varepsilon, \varepsilon)$. 
Then
\begin{equation}
(\forall\ve\in\RPP)\quad
Tz_{\ve} = (1+\ve,\ve)\notin\Fix T = [-1,1]\times\{0\}
\;\;\text{yet}\;\;
\|z_\ve-Tz_{\ve}\|=\ve.
\end{equation}
We conclude that (the resolvent reformulation of) Luque's condition 
\eqref{e:Lres} fails. 
\end{enumerate}
\end{remark}

\section{Applications}

\label{s:four}

\subsection{Product space setup and Spingarn's method}

\label{s:prodSpin}

Let us now consider a feasibility problem with possibly more than
two sets, say 
\begin{equation}
\label{e:moresets}
\text{find a point in } C :=\bigcap_{j=1}^M C_j,
\end{equation}
where
\begin{empheq}[box=\mybluebox]{equation}
\label{e:prodsetup}
\text{$C_1, \dots, C_M$ are closed convex subsets of $X$ such that $C \neq\varnothing$}.
\end{empheq}
This problem is reduced to a two-set problem as follows.
In the product Hilbert space $\bX :=X^M$,
with the inner product defined by
$((x_1, \dots, x_M), (y_1, \dots, y_M)) \mapsto
\sum_{j=1}^M \scal{x_j}{y_j}$,
we set
\begin{equation}
\bA :=\menge{(x,\dots, x) \in \bX}{x \in X} \quad\text{and}\quad 
\bB :=C_1\times \cdots \times C_M.
\end{equation} 
Because of 
\begin{equation}
x \in \bigcap_{j=1}^M C_j \quad\Leftrightarrow\quad (x, \dots, x) \in \bA\cap \bB,
\end{equation}
the $M$-set problem \eqref{e:moresets}, which is formulated in $X$, is
equivalent to the two-set problem 
\begin{equation}
\label{e:prob'}
\text{find a point in $\bA\cap \bB$},
\end{equation}
which is posed in $\bX$.
By, e.g., \cite[Proposition~25.4(iii) and Proposition~28.3]{BC11}, 
the projections of $\bx =(x_1, \dots, x_M) \in \bX$ onto $\bA$ and $\bB$ are respectively given by
\begin{equation}
\label{e:projAB}
P_\bA\bx =\Big( \frac{1}{M}\sum_{j=1}^M x_j, \dots,
\frac{1}{M}\sum_{j=1}^M x_j \Big)
\quad\text{and}\quad P_\bB\bx =\big( P_{C_1}x_1, \dots,
P_{C_M}x_M \big).
\end{equation}
This opens the door of applying the DRA in $\bX$:
indeed, set 
\begin{equation}
\bT :=\Id -P_\bA +P_\bB R_\bA,
\end{equation}
fix a starting point $\bz_0 \in \bX$, and generate
the DRA sequence $(\bz_n)_\nnn$ via
\begin{equation}
\label{e:DRA'}
(\forall\nnn)\quad \bz_{n+1} := \bT\bz_n.
\end{equation}
We now obtain the following result as a consequence of
Theorem~\ref{t:Bpoly}.

\begin{corollary}
Suppose that $C_1,\ldots,C_M$ are polyhedral such that
$\inte C\neq\varnothing$. 
Then the DRA sequence defined by \eqref{e:DRA'} converges
finitely to $\bz =(z, \dots, z) \in \bA\cap \bB$ with 
$z \in C =\bigcap_{j=1}^M C_j$.
\end{corollary}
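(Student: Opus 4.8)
The plan is simply to verify that the product-space data $(\bA,\bB)$ meet the hypotheses of Theorem~\ref{t:Bpoly} and then translate the conclusion back to $X$. First I would note that $\bA$, being the diagonal of $\bX$, is a linear (hence affine) subspace. Next, since each $C_j$ is a finite intersection of halfspaces of $X$, and each such halfspace of a factor pulls back (via the coordinate projection $\bX\to X$) to a halfspace of $\bX$, the product $\bB=C_1\times\cdots\times C_M$ is itself a finite intersection of halfspaces of $\bX$; thus $\bB$ is polyhedral. As observed right after the definition of local polyhedrality, a polyhedron is polyhedral at each of its points, so in particular $\bB$ is polyhedral at every point of $\bA\cap\bd\bB$.

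It remains to check Slater's condition $\bA\cap\inte\bB\neq\varnothing$. By hypothesis there is some $x\in\inte C$. Since $\inte C$ is open and contained in each $C_j$, we get $\inte C\subseteq\inte C_j$ for every $j$, hence $x\in\bigcap_{j=1}^M\inte C_j$. Because the interior of a finite product is the product of the interiors, $\inte\bB=\inte C_1\times\cdots\times\inte C_M$, and therefore $(x,\dots,x)\in\bA\cap\inte\bB$, as required. (Alternatively one could route this through Corollary~\ref{c:CQ}, but the direct argument is shorter.)

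All hypotheses of Theorem~\ref{t:Bpoly} now being in force in $\bX$, that theorem applies: the DRA sequence $(\bz_n)_\nnn$ of \eqref{e:DRA'} (together with $(P_\bA\bz_n)_\nnn$ and $(R_\bA\bz_n)_\nnn$) converges in finitely many steps to some $\bz\in\bA\cap\bB$. Finally, membership in $\bA$ forces $\bz=(z,\dots,z)$ for some $z\in X$, and membership in $\bB$ then gives $z\in C_j$ for all $j$, i.e.\ $z\in C=\bigcap_{j=1}^M C_j$, which is the assertion. I do not expect a genuine obstacle here; the only two points needing a sentence of justification are that a finite product of polyhedra is polyhedral in the product space and that $\inte C\neq\varnothing$ propagates to $\bigcap_j\inte C_j\neq\varnothing$, both of which are elementary.
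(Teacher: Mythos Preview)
Your proposal is correct and follows essentially the same approach as the paper: verify that $\bA$ is affine, that $\bB$ is polyhedral, and that Slater's condition $\bA\cap\inte\bB\neq\varnothing$ holds, then invoke Theorem~\ref{t:Bpoly}. The only cosmetic difference is that the paper checks Slater's condition by taking a ball $\ball{c}{\ve}\subseteq C\subseteq C_j$ for all $j$, whereas you argue via $\inte C\subseteq\inte C_j$ and the product-of-interiors identity; both are equally valid and equally short.
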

\begin{proof}
Since $\inte C \neq\varnothing$, 
there exists $c \in C$ and $\ve\in \RPP$ such that 
$(\forall j\in\{1,\ldots,M\})$
$\ball{c}{\ve} \subseteq C$. 
Then $(c, \dots, c) \in \bA\cap \inte\bB$, and so 
$\bA\cap \inte\bB \neq\varnothing$.
Since $\bB =C_1\times \cdots \times C_M$ is a polyhedral subset
of $\bX$, 
the conclusion now follows from Theorem~\ref{t:Bpoly}.
\end{proof}

Problem \eqref{e:moresets} was already considered by Spingarn \cite{Spi85}
for the case where all sets $C_1, \dots, C_M$ are \emph{halfspaces}. 
He cast the resulting problem into the form
\begin{equation}
\text{find $(\ba, \bb) \in \bA\times \bA^\perp$ such that $\bb
\in N_\bB(\ba)$,}
\end{equation}
and suggested solving it by a version of his 
\emph{method of partial inverses} \cite{Spi83},
which generates a sequence $(\ba_n,\bb_n)_\nnn$ via 
\begin{equation}
\label{e:MPI}
(\ba_0, \bb_0) \in \bA\times \bA^\perp 
\quad\text{and}\quad (\forall\nnn) \quad
\begin{cases}
\ba'_n :=P_\bB(\ba_n +\bb_n), & \bb'_n :=\ba_n +\bb_n -\ba'_n, \\
\ba_{n+1} :=P_\bA\ba'_n, & \bb_{n+1} :=\bb'_n -P_\bA\bb'_n.
\end{cases} 
\end{equation}
It was pointed out in \cite[Section~1]{LS87}, \cite[Section~5]{EB92}, \cite[Appendix]{MOT95} and \cite[Remark~2.4]{Com09} 
that this scheme is closely related to the DRA.
(In fact, Lemma~\ref{l:SpinDR} above makes it completely clear why 
\eqref{e:MPI} is equivalent to applying the DRA to $\bA$ and
$\bB$, with starting point $(\ba_0-\bb_0)$.)
However, Spingarn's proof of finite convergence in \cite{Spi85} requires 
\begin{equation}
\ba_n -\ba_{n+1} \in N_C(c) \times \cdots \times N_C(c),
\end{equation}
and he chooses a linear functional $f$ based on the 
``diagonal'' structure of $\bA$ ---  
unfortunately, his proof does not work 
for problem \eqref{e:prob}. Our proof in the previous section
at the same time simplifies and strengthens his proof technique
to allow us to deal with polyhedral sets rather than just
halfspaces. While every polyhedron is an intersection of
halfspaces, the problems are theoretically
equivalent --- in practice, however, there can be \emph{huge
savings} as the requirement to work in Spingarn's setup might
lead to much larger instances of the product space $\bX$! 
It also liberates us from being forced to work in the
product space. Our extension is also intrinsically more 
flexible as the following example illustrates. 

\begin{example}
Suppose that $X =\RR^2$, 
that $A =\menge{(x, x)}{x \in \RR}$ is diagonal, 
and that $B =\menge{(x, y)\in\RR^2}{-y \leq x \leq 2}$.
Clearly, $B$ is polyhedral and $A\cap\inte B\neq\varnothing$. 
Moreover, $B$ is also \emph{not} the Cartesian product of 
two polyhedral subsets of $\RR$, i.e., of two intervals. 
Therefore, the proof of finite convergence of the DRA 
in \cite{Spi85} no longer applies.
However, 
$A$ and $B$ satisfy all assumptions of Theorem~\ref{t:Bpoly},
and thus the DRA finds a point in $A\cap B$ after a finite number
of steps (regardless of the location of starting point).
See Figure~\ref{fg:ex} for an illustration, created with
\texttt{GeoGebra} \cite{GGB}. 
\begin{figure}[!ht]
\centering
\includegraphics[width =0.95\columnwidth]{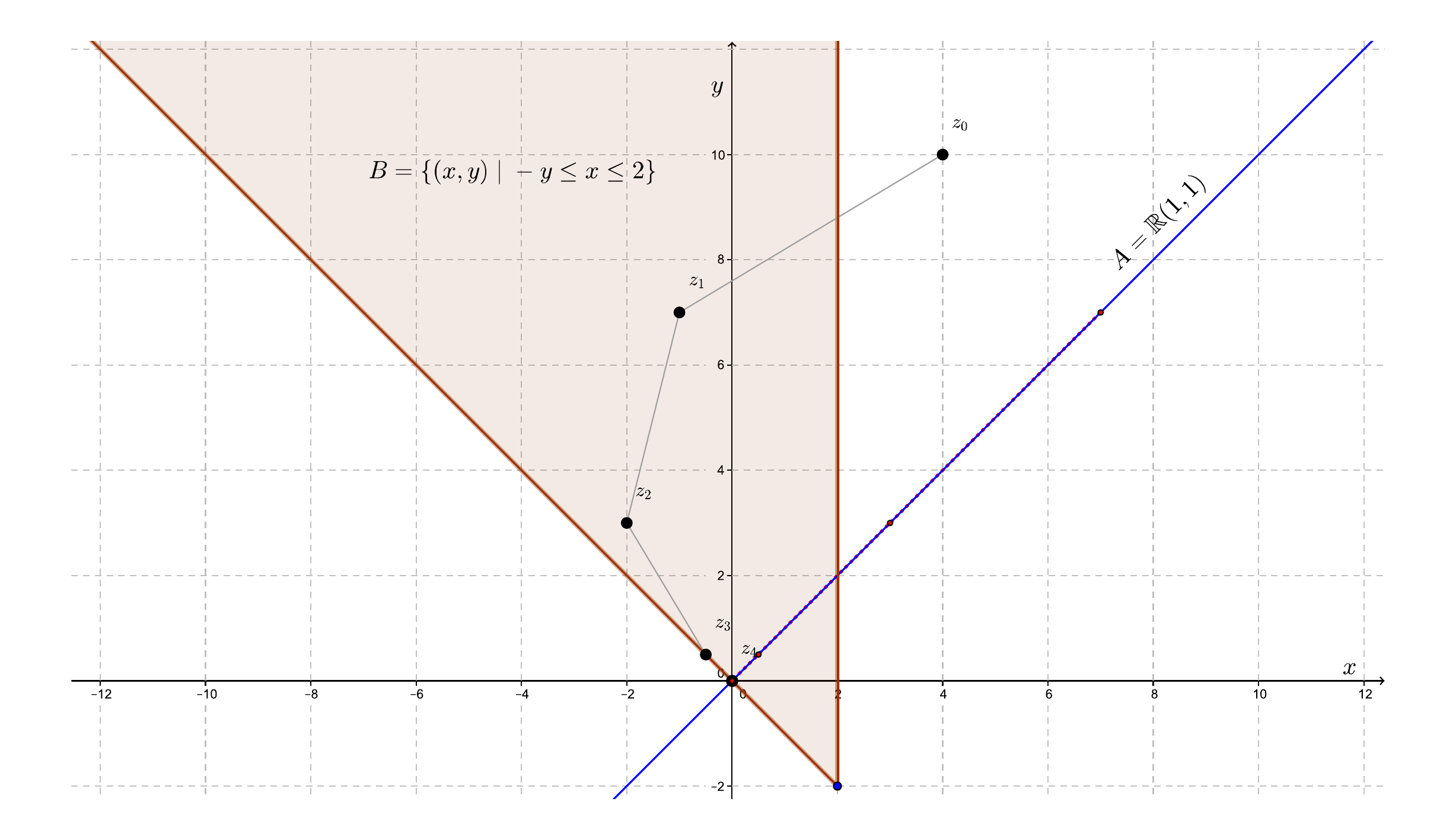}
\caption{The DRA for the case when $B$ is not a Cartesian
product.}
\label{fg:ex}
\end{figure}
\end{example}

\subsection{Solving linear equalities with a strict positivity constraint} 

In this subsection, we assume that
\begin{equation}
X = \RR^N,
\end{equation}
and that 
\begin{equation}
A = \menge{x\in X}{Lx=a}
\;\;\text{and}\;\; 
B = \RR^N_+,
\end{equation}
where $L\in\RR^{M\times N}$ and $a\in\RR^M$.
Note that the set $A\cap B$ is polyhedral yet 
$A\cap B$ has empty interior (unless $A=X$ which is
a case of little interest). 
Thus, Spingarn's finite-convergence result is \emph{never
applicable}.
However, Theorem~\ref{t:Bpoly} guarantees finite convergence of
the DRA provided that 
Slater' condition
\begin{equation}
\label{e:CQ_orthant}
\RR^N_{++} \cap L^{-1}a \neq\varnothing
\end{equation}
holds. 
Using \cite[Lemma~4.1]{BK04}, we obtain 
\begin{equation}
P_A\colon X\to X\colon x\mapsto x -L^\dagger(Lx -a),
\end{equation}
where $L^\dagger$ denotes the Moore-Penrose inverse of $L$.
We also have (see, e.g., \cite[Example~6.28]{BC11})
\begin{equation}
P_B\colon X\to X\colon 
x =(\xi_1, \dots, \xi_N) \mapsto 
x =(\xi_1^+, \dots, \xi_N^+),
\end{equation}
where $\xi^+ = \max\{\xi,0\}$ for every $\xi\in\RR$. 
This implies 
$R_A\colon X\to X\colon
x\mapsto x-2L^\dagger(Lx-a)$ and 
$R_B\colon X\to X\colon 
x =(\xi_1, \dots, \xi_N) \mapsto (|\xi_1|,\ldots,|\xi_N|)$.
We will compare three algorithms, all of which generate a
governing sequence with starting point $z_0\in X$ via
\begin{equation}
(\forall\nnn)\quad z_{n+1}=Tz_n.
\end{equation}
The DRA uses, of course,
\begin{equation}
\label{e:0421DRA}
T = \Id-P_A + P_BR_A.
\end{equation}
The second method is the classical \emph{method of alternating
projections (MAP)} where 
\begin{equation}
T = P_AP_B;
\end{equation}
while the third \emph{method of reflection-projection (MRP)} employs
\begin{equation}
T = P_AR_B. 
\end{equation}

We now illustrate the performance of these three algorithms
numerically. For the remainder of this section, we assume that 
that $X =\RR^2$, 
that $A =\menge{(x, y) \in \RR^2}{x +5y =6}$, and that $B =\RP^2$.
Then $A$ and $B$ satisfy \eqref{e:CQ_orthant},
and the sequence \eqref{e:0421DRA} generated by the 
DRA thus converges finitely to a point in $A\cap B$ regardless of 
the starting point.
See Figure~\ref{fg:orthant} for an illustration, created with \texttt{GeoGebra} \cite{GGB}.
\begin{figure}[!ht]
\centering
\includegraphics[width =0.95\columnwidth, trim=0.5cm 0.5cm 0.5cm 0.5cm]{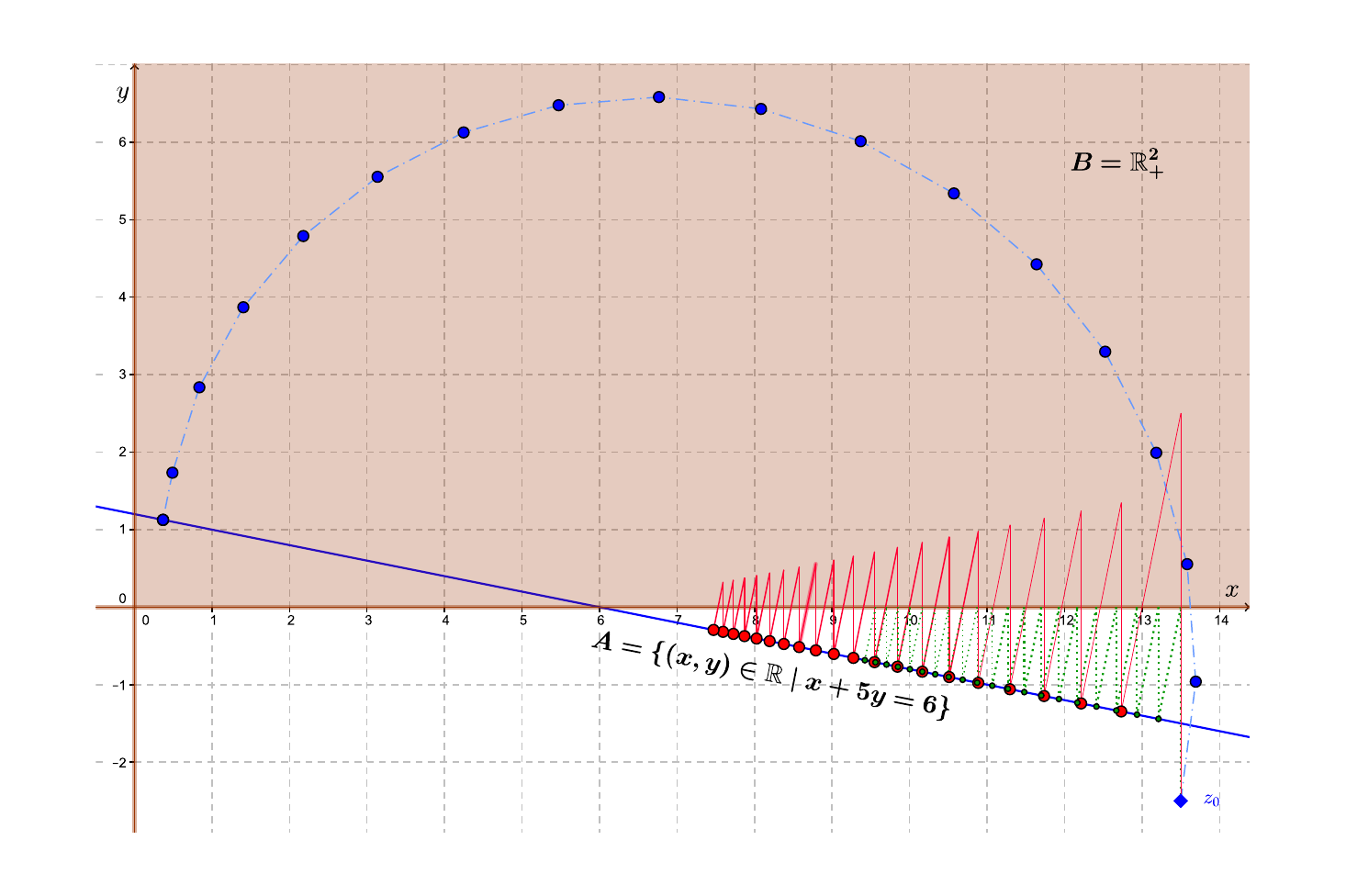}
\caption{The orbits of DRA (blue), MAP (green) and MRP (red).}
\label{fg:orthant}
\end{figure}
Note that the shadow sequence $(P_Az_n)_\nnn$ for the DRA finds a
point in $A\cap B$ even before a fixed point is reached. 
For each starting point $z_0 \in [-100, 100]^2 \subseteq \RR^2$, 
we perform the DRA until $z_{n+1} =z_n$, 
and run the MAP and the MRP until $d_B(z_n) =\max\{d_A(z_n),
d_B(z_n)\} <\ve$, where we set the tolerance $\ve =10^{-4}$. 
Figure~\ref{fg:orthant_steps} compares the number of iterations
needed to stop each algorithm. Note that
even though we put the DRA at an ``unfair disadvantage'' (it must find a
true fixed point while the MAP and the MRP will stop with $\ve$-feasible
solutions), it does extremely well. 
In Figure~\ref{fg:orthant_d}, we level the playing field and  compare the 
distance from $P_Az_n$ (for the DRA) or from $z_n$ (for the MAP
and the MRP) to $B$,
where $n \in \{5, 10\}$.
\begin{figure}[!ht]
\centering
\includegraphics[width =0.95\columnwidth]{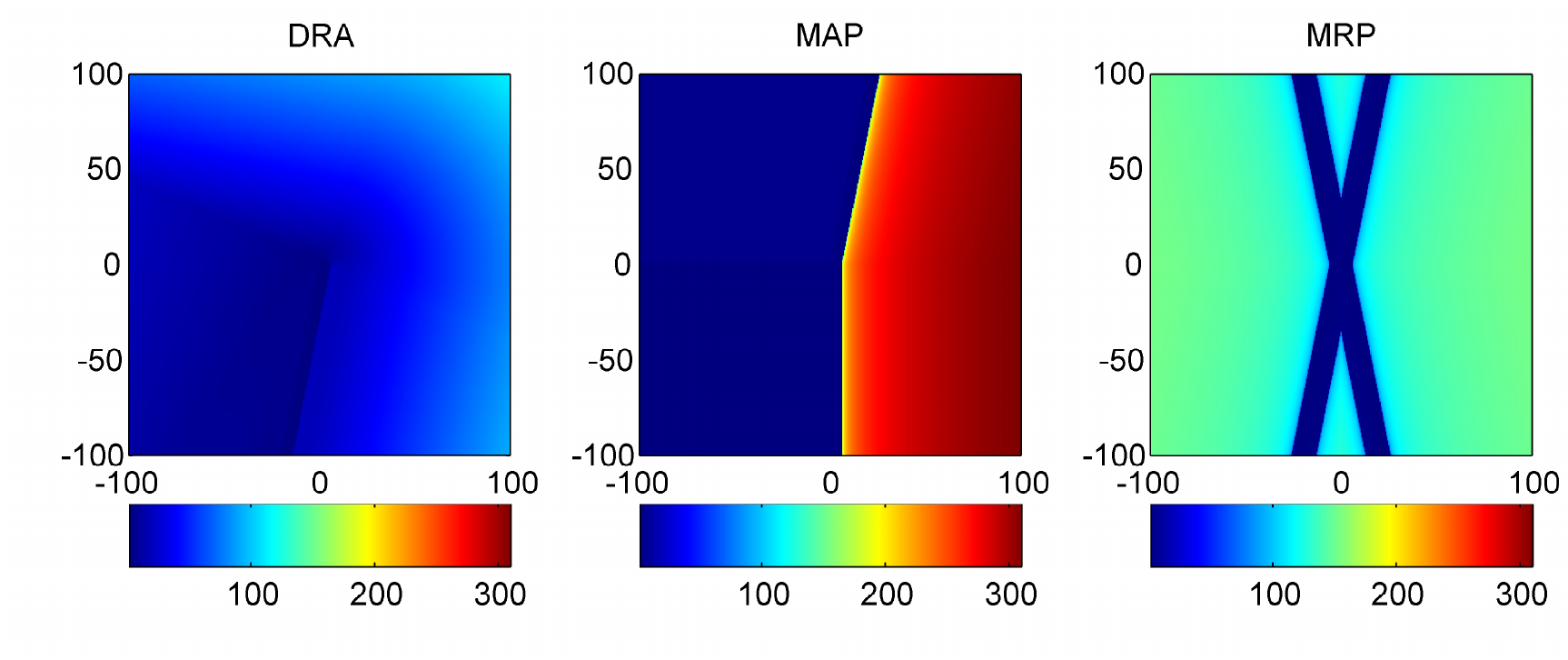}
\caption{Number of iterations needed to get the solution of DRA, MAP and MRP.}
\label{fg:orthant_steps}
\end{figure}

\begin{figure}[!ht]
\centering
\includegraphics[width =0.95\columnwidth]{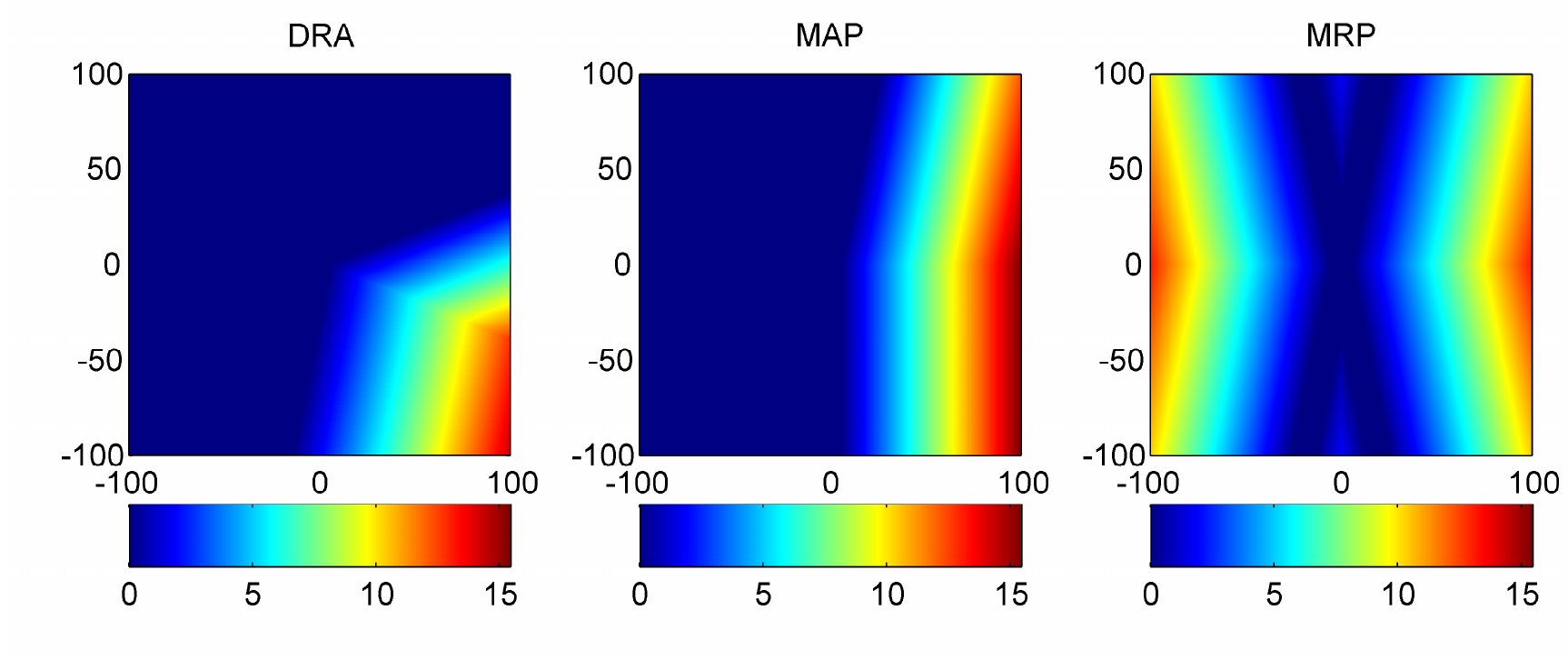} \\
\includegraphics[width =0.95\columnwidth]{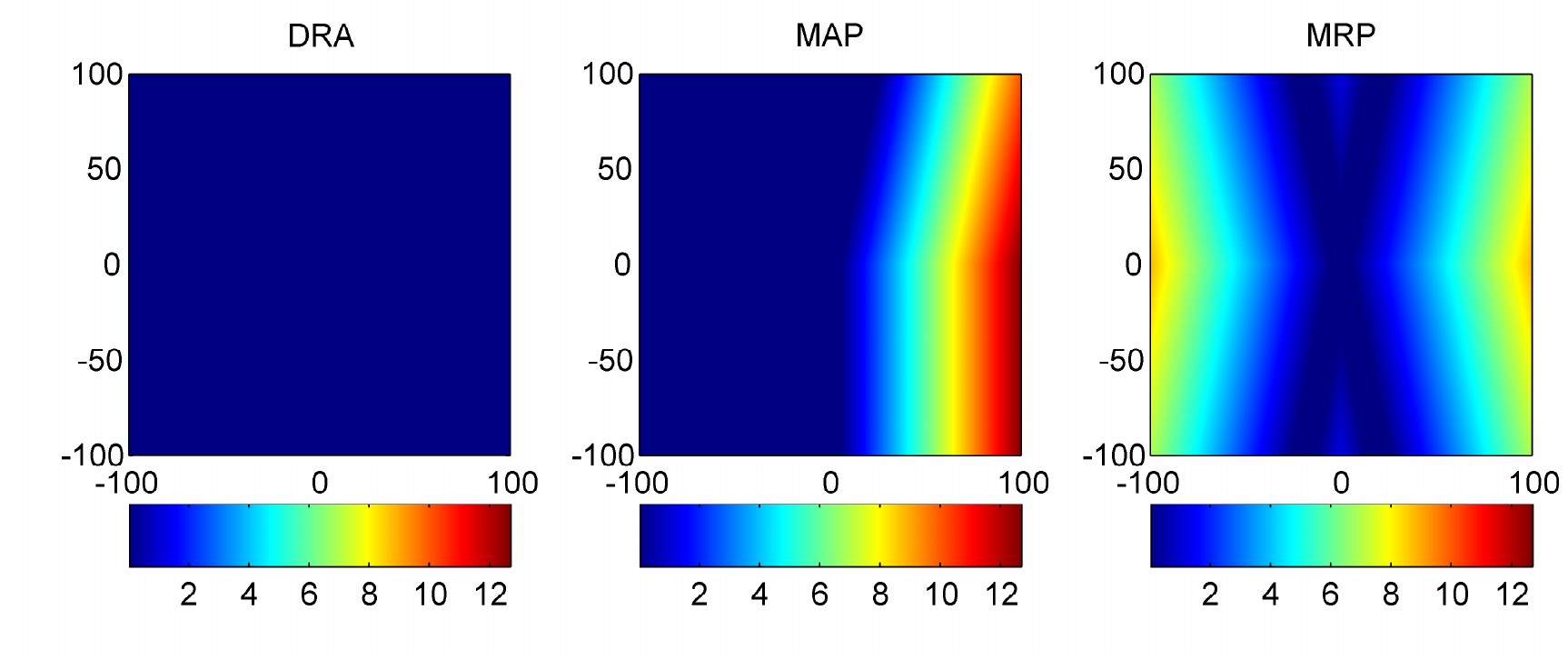}
\caption{The distance from the monitored iterate to $B$ after $5$ steps (\emph{top}) and $10$ steps (\emph{bottom}).}
\label{fg:orthant_d}
\end{figure}

Now we look at the process of reaching a solution for each algorithm. 
For the DRA, we monitor the shadow sequence $(P_Az_n)_\nnn$ and
for the MAP and the MRP, we monitor
$(z_n)_\nnn$. 
Note all three monitored sequences lie in $A$, and we thus are
concerned about the distance to $B$.
Our stopping criterion is that
\begin{equation}
d_B(P_Az_n) < \ve
\end{equation}
for the DRA, and 
\begin{equation}
d_B(z_n) < \ve 
\end{equation}
for the MAP and the MRP. 
From top to bottom in Figure~\ref{fg:orthant_tols}, 
we check how many iterates are required to get to tolerance 
$\ve =10^{-m}$, where $m \in \{2, 4\}$, respectively.
\begin{figure}[!ht]
\centering
\includegraphics[width =0.95\columnwidth]{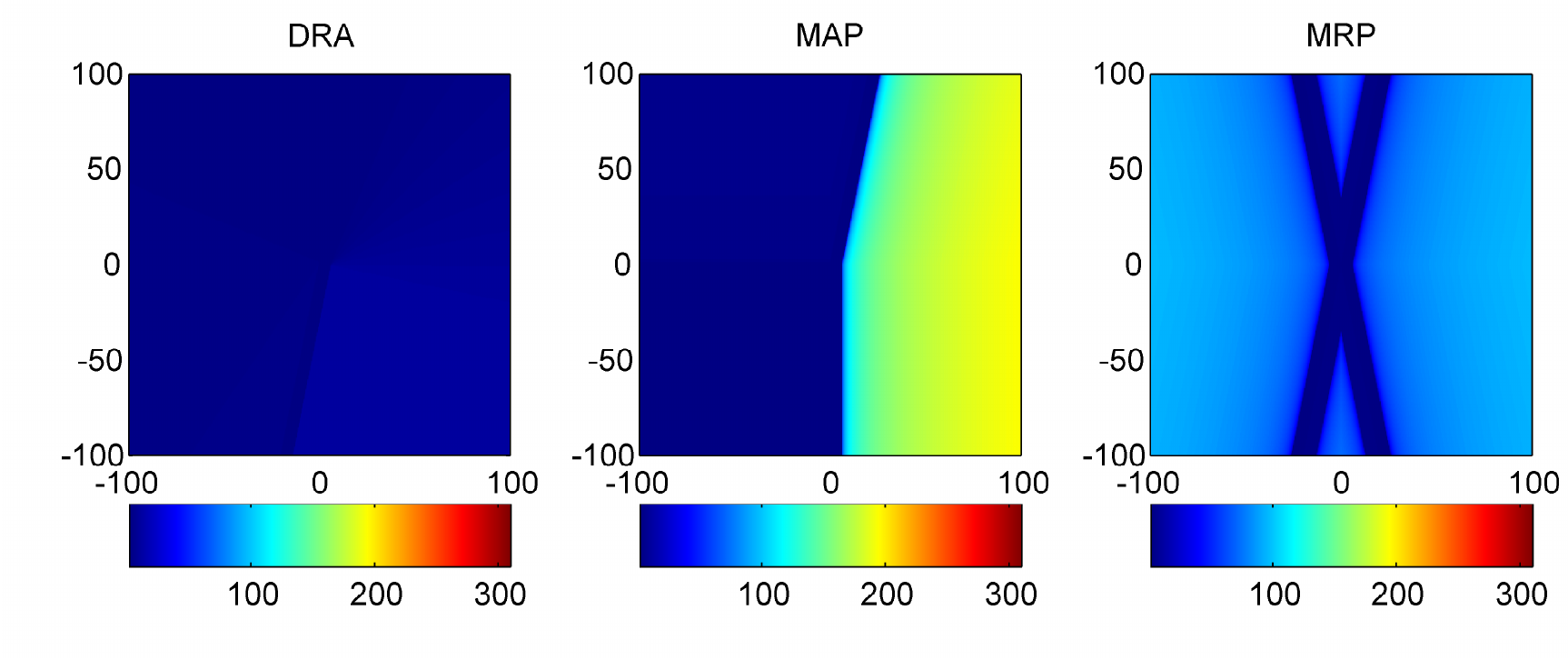}\\
\includegraphics[width =0.95\columnwidth]{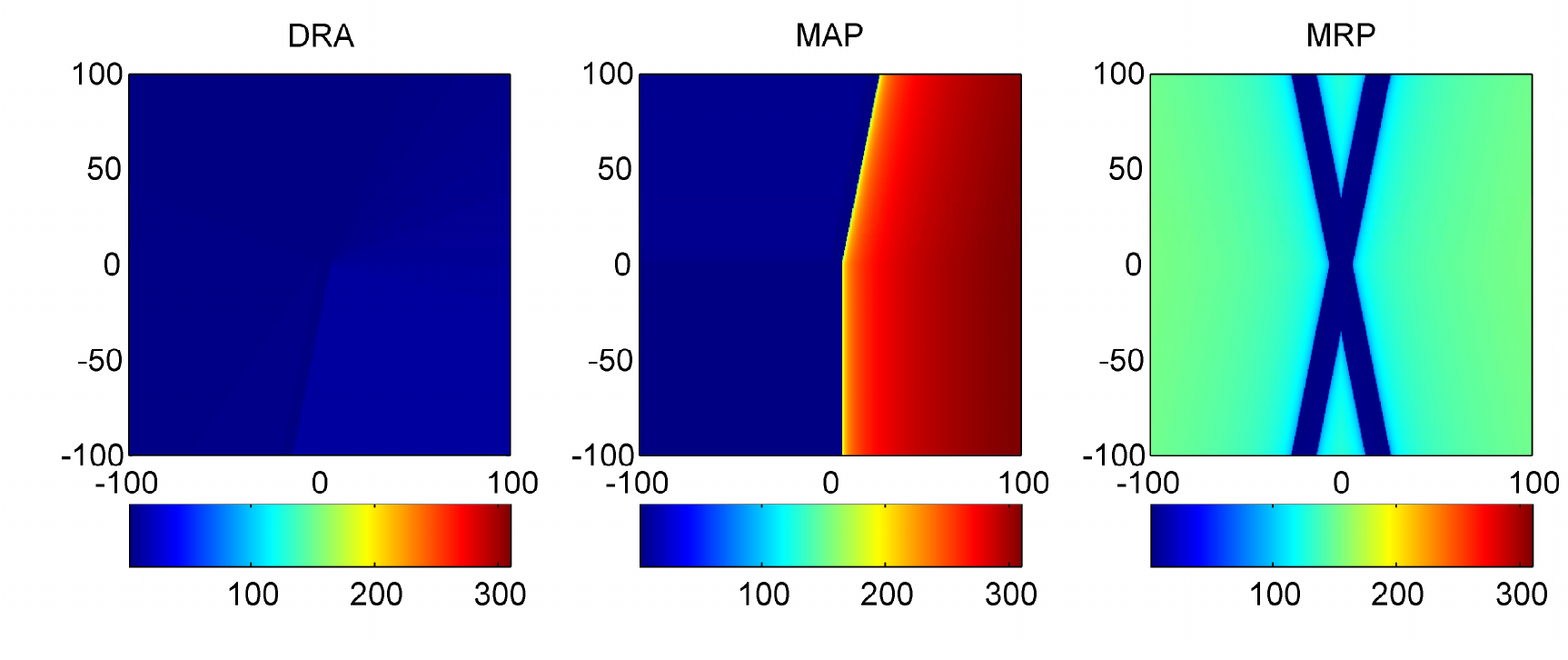}
\caption{Number of iterations needed to get the tolerance $10^{-2}$ (\emph{top}) and $10^{-4}$ (\emph{bottom}) of DRA, MAP and MRP.}
\label{fg:orthant_tols}
\end{figure}
Computations were performed with MATLAB R2013b \cite{Matlab}.
These experiments illustrate the superior convergence behaviour
of the DRA compared to the MAP and the MRP.

\section{The hyperplanar-epigraphical case with Slater's condition}

\label{s:hypepi}

In this section we assume that 
\begin{subequations}
\begin{empheq}[box=\mybluebox]{equation}
\text{$f \colon X \to \RR$ is convex and continuous.}
\end{empheq}
We will work in $X\times\RR$, where we set 
\begin{empheq}[box=\mybluebox]{equation}
A :=X\times \{0\}
\end{empheq}
and 
\begin{empheq}[box=\mybluebox]{equation}
B :=\epi f :=\menge{(x, \rho) \in X\times \RR}{f(x)\leq\rho}.
\end{empheq}
\end{subequations}
Then 
\begin{equation}
P_A \colon X\times\RR\to X\times\RR\colon
(x, \rho) \mapsto (x, 0),
\end{equation}
and the projection onto $B$ is described in the following result.

\begin{lemma}
\label{l:epi}
Let $(x, \rho) \in (X\times \RR)\smallsetminus B$. 
Then there exists $p\in X$ such that $P_B(x, \rho) =(p, f(p))$, 
\begin{equation}
\label{el:epi_proj}
x \in p +\big(f(p) -\rho\big)\partial f(p)\;\text{and}\; \rho <f(p) \leq
f(x)
\end{equation}
and 
\begin{equation}
\label{e:epi_proj}
(\forall y \in X)\quad \scal{y -p}{x -p} \leq \big(f(y)
-f(p)\big)\big(f(p) -\rho\big).
\end{equation}
Moreover, the following hold:
\begin{enumerate}
\item 
\label{l:epi_bdry}
If $u$ is a minimizer of $f$, then $\scal{u -p}{x -p} \leq 0$.
\item 
\label{l:epi_min}
If $x$ is a minimizer of $f$, then $p =x$.
\item 
\label{l:epi_strict}
If $x$ is not a minimizer of $f$, then $f(p) <f(x)$.
\end{enumerate}
\end{lemma}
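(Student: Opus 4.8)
The plan is to characterize the projection onto the epigraph $B=\epi f$ via the optimality conditions of the underlying minimization problem and then read off the stated consequences. First I would fix $(x,\rho)\notin B$, so that $f(x)>\rho$, and consider the convex minimization problem $\minimize{(y,\sigma)\in B}{\tfrac12\|y-x\|^2+\tfrac12|\sigma-\rho|^2}$, whose unique solution is $P_B(x,\rho)=(p,\sigma_p)$. Since $(x,\rho)\notin B$ and the objective strictly decreases in $\sigma$ towards $\rho$ while staying in $B$ forces $\sigma\geq f(y)$, the constraint is active at the optimum: $\sigma_p=f(p)$. Writing the epigraph constraint as $f(y)-\sigma\leq 0$ and applying the Karush--Kuhn--Tucker / subdifferential optimality conditions (valid since the Slater point $(y,\sigma)$ with $\sigma>f(y)$ exists for continuous convex $f$), there is a multiplier $\lambda\geq 0$ with $x-p=\lambda v$ for some $v\in\partial f(p)$, $\rho-\sigma_p=-\lambda$, and complementary slackness. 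Hence $\lambda=f(p)-\rho>0$ (strictly, since otherwise $x=p$ and $\rho=f(p)$, contradicting $(x,\rho)\notin B$), which gives $x\in p+(f(p)-\rho)\partial f(p)$ and $\rho<f(p)$; the inequality $f(p)\le f(x)$ follows because $(p,f(p))$ is the nearest point of $B$ to $(x,\rho)$ while $(x,f(x))\in B$, so comparing distances yields $\|p-x\|^2+(f(p)-\rho)^2\le (f(x)-\rho)^2$, in particular $f(p)-\rho\le f(x)-\rho$. This establishes \eqref{el:epi_proj}.

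Next I would derive the subgradient inequality \eqref{e:epi_proj}. From $x-p=(f(p)-\rho)w$ with $w\in\partial f(p)$, the subgradient inequality for $f$ at $p$ gives, for every $y\in X$, $f(y)-f(p)\ge\scal{w}{y-p}$. Multiplying by the nonnegative scalar $f(p)-\rho$ yields $(f(y)-f(p))(f(p)-\rho)\ge\scal{(f(p)-\rho)w}{y-p}=\scal{x-p}{y-p}$, which is exactly \eqref{e:epi_proj}. (Alternatively, \eqref{e:epi_proj} is just the variational inequality characterizing $(p,f(p))=P_B(x,\rho)$ tested against the points $(y,f(y))\in B$, after expanding the inner product in $X\times\RR$ and using $f(p)\le f(y)$ together with $\rho<f(p)$.)

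For the three numbered items I would argue as follows. For \ref{l:epi_bdry}: if $u$ minimizes $f$, then $f(u)\le f(p)$, so plugging $y=u$ into \eqref{e:epi_proj} gives $\scal{u-p}{x-p}\le(f(u)-f(p))(f(p)-\rho)\le 0$, since the first factor is $\le 0$ and the second is $>0$. For \ref{l:epi_min}: if $x$ minimizes $f$, then $0\in\partial f(x)$; I claim $p=x$. Indeed $(x,f(x))\in B$ and the candidate $(x,f(x))$ satisfies the optimality conditions for $P_B(x,\rho)$ — it suffices to check $\scal{(x,f(x))-(x,\rho)}{(y,\sigma)-(x,f(x))}\ge 0$ for all $(y,\sigma)\in B$, i.e.\ $(f(x)-\rho)(\sigma-f(x))\ge 0$, which holds because $\sigma\ge f(y)\ge f(x)$ and $f(x)>\rho$. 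By uniqueness of the projection, $P_B(x,\rho)=(x,f(x))$, so $p=x$. For \ref{l:epi_strict}: suppose $x$ is not a minimizer but $f(p)=f(x)$; from \eqref{el:epi_proj} we have $p+(f(p)-\rho)\partial f(p)\ni x$, so $x-p=(f(x)-\rho)w$ with $w\in\partial f(p)$, and then for any $y$, $f(y)\ge f(p)+\scal{w}{y-p}$, i.e.\ $f(y)\ge f(x)+\tfrac{1}{f(x)-\rho}\scal{x-p}{y-p}$; testing $y=x$ gives $f(x)\ge f(x)+\tfrac{1}{f(x)-\rho}\|x-p\|^2$, forcing $p=x$, whence $w\in\partial f(x)$ and $x-p=0=(f(x)-\rho)w$ gives $w=0$, so $x$ minimizes $f$ — a contradiction. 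Hence $f(p)<f(x)$.

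The main obstacle is the clean justification of the first bullet point of the chain — namely that the epigraph constraint is active and that the KKT multiplier equals $f(p)-\rho$ with the correct sign — handling the degenerate possibilities ($\partial f(p)$ could contain $0$, or $f(p)-\rho$ could a priori be zero) without circularity. The cleanest route, which I would use to keep the argument self-contained, is to bypass explicit multipliers: characterize $(p,f(p))=P_B(x,\rho)$ purely by the projection variational inequality $\scal{(x,\rho)-(p,f(p))}{(y,f(y))-(p,f(p))}\le 0$ for all $y\in X$ (using that it suffices to test against points $(y,f(y))$ on the graph, since any $(y,\sigma)\in B$ has $\sigma\ge f(y)$ and the $\RR$-component of $(x,\rho)-(p,f(p))$ is $\rho-f(p)<0$), expand to get $\scal{x-p}{y-p}\le(f(p)-\rho)(f(y)-f(p))$, and then recognize the right-hand side's structure to extract a subgradient: taking $y=p+t(y'-p)$ and letting $t\downarrow 0$ shows $\scal{x-p}{y'-p}\le(f(p)-\rho)f'(p;y'-p)$ for all directions, which (since $f(p)-\rho>0$, itself forced because $(x,\rho)\notin B$ rules out $f(p)=\rho$ together with $x=p$, and $f(p)\ge f(x)>\rho$ is impossible so in fact $\rho<f(p)\le f(x)$) yields $\tfrac{1}{f(p)-\rho}(x-p)\in\partial f(p)$, i.e.\ \eqref{el:epi_proj}. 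This is the step to get exactly right; everything after it is routine.
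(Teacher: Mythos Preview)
Your proof is correct and follows essentially the same approach as the paper: characterize $P_B(x,\rho)$ via optimality/KKT conditions to obtain \eqref{el:epi_proj}, derive \eqref{e:epi_proj} from the subgradient inequality, then read off \ref{l:epi_bdry}--\ref{l:epi_strict}. The paper outsources the first step to \cite[Propositions~28.28 and~9.18]{BC11} rather than spelling out the KKT argument, and it proves \ref{l:epi_min} in one line by applying \ref{l:epi_bdry} with $u=x$ (giving $\|x-p\|^2\le 0$) instead of verifying the projection variational inequality directly as you do; otherwise the arguments are the same.
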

\begin{proof}
According to \cite[Proposition~28.28]{BC11}, 
there exists $(p,p^*)\in\gra \partial f$ such that 
$P_B(x, \rho) =(p, f(p))$ and 
$x = p +(f(p) -\rho)p^*$.
Next, \cite[Propositions~9.18]{BC11} implies that 
$\rho <f(p)$ and \eqref{e:epi_proj}.
The subgradient inequality gives
\begin{equation}
\label{e:fxfp}
f(x) -f(p) \geq \scal{p^*}{x -p} =\scal{p^*}{(f(p) -\rho)p^*} =(f(p) -\rho)\|p^*\|^2 \geq 0.
\end{equation}
Hence $f(p) \leq f(x)$, and this completes the proof of \eqref{el:epi_proj}, 

\ref{l:epi_bdry}: It follows from \eqref{e:epi_proj} that
$\scal{u -p}{x -p} \leq (f(u) -f(p))(f(p) -\rho)$.
Since $f(p) -\rho >0$ and $f(u) \leq f(p)$, 
we have $\scal{u -p}{x -p} \leq 0$.

\ref{l:epi_min}: Apply \ref{l:epi_bdry} with $u =x$.

\ref{l:epi_strict}: By \eqref{el:epi_proj}, $\rho < f(p) \leq f(x)$. 
We show the contrapositive and thus 
assume that $f(p) =f(x)$. Since $f(p) -\rho >0$, 
\eqref{e:fxfp} yields $p^* =0$, and so $0 \in \partial f(p)$. 
It follows that $p$ is a minimizer of $f$, and therefore $x$ also 
minimizes $f$. 
\end{proof}

\begin{remark}
\label{r:pin}
When $X =\RR$ and $u$ is a minimizer of $f$, then 
$(u -p)(x -p) \leq 0$ by 
Lemma~\ref{l:epi}\ref{l:epi_bdry}.
Therefore, $p$ lies between $x$ and $u$ 
(see also \cite[Corollary~4.2]{BDNP15}).
\end{remark}

Define the DRA operator by 
\begin{empheq}[box=\mybluebox]{equation}
T :=\Id -P_A +P_BR_A. 
\end{empheq}
It will be convenient to abbreviate 
\begin{empheq}[box=\mybluebox]{equation}
\label{e:B'}
B' :=R_A(B) =\menge{(x, \rho) \in X\times \RR}{\rho \leq -f(x)}
\end{empheq}
and to analyze the effect of performing one DRA step in the
following result.

\begin{corollary}[one DRA step]
\label{c:DRstep}
Let $z =(x, \rho) \in X\times \RR$, 
and set $z_+ :=(x_+, \rho_+) =T(x, \rho)$. 
Then the following hold:
\begin{enumerate}
\item 
\label{c:DRstep_B'}
Suppose that $z \in B'$. 
Then $z_+ =(x, 0) \in A$.
Moreover, either 
($f(x)\leq 0$ and $z_+ \in A\cap B$) or  
($f(x) > 0$ and $z_+ \not\in B\cup B'$). 
\item 
\label{c:DRstep_oB'}
Suppose that $z \not\in B'$. 
Then there exists $x_+^* \in \partial f(x_+)$ such that 
\begin{equation}
\label{e:DRstep}
x_+ =x -\rho_+x_+^*,\; f(x_+) \leq f(x), \;\text{and}\;
\rho_+ =\rho +f(x_+) >0.
\end{equation}
Moreover, either ($\rho\geq 0$ and $z_+ \in B$)  
or ($\rho<0$, $z_+ \not\in B\cup B'$ and $Tz_+ \in B$). 
\item 
\label{c:DRstep_BB'}
Suppose that $z \in B\cap B'$. Then $z_+ \in A\cap B$.
\item 
\label{c:DRstep_BoB'}
Suppose that $z \in B\smallsetminus B'$. 
Then $z_+ \in B$. 
\end{enumerate}
\end{corollary}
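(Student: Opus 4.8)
The plan is to unwind a single application of $T$ using the explicit formulas for $P_A$ and $R_A$, and then to feed the reflected point into Lemma~\ref{l:epi}. Since $P_A(x,\rho)=(x,0)$, we have $R_A(x,\rho)=(x,-\rho)$, so
\[
Tz=(x,\rho)-(x,0)+P_B(x,-\rho)=(0,\rho)+P_B(x,-\rho).
\]
The governing dichotomy is whether $(x,-\rho)\in B$, which by the description \eqref{e:B'} of $B'$ is exactly the condition $z\in B'$; all four items split along this line.

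For \ref{c:DRstep_B'}: if $z\in B'$ then $P_B(x,-\rho)=(x,-\rho)$, hence $z_+=(x,0)\in A$. Here $(x,0)\in B\Leftrightarrow f(x)\le 0$ and also $(x,0)\in B'\Leftrightarrow f(x)\le 0$, so either $f(x)\le 0$ and $z_+\in A\cap B$, or $f(x)>0$ and $z_+\notin B\cup B'$. For \ref{c:DRstep_oB'}: if $z\notin B'$ then $(x,-\rho)\notin B$, so Lemma~\ref{l:epi} applied to the point $(x,-\rho)$ (with $-\rho$ in the role of $\rho$ there) produces $p\in X$ with $P_B(x,-\rho)=(p,f(p))$, with $x\in p+(f(p)+\rho)\partial f(p)$, and with $-\rho<f(p)\le f(x)$. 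Setting $x_+:=p$, $\rho_+:=\rho+f(p)=\rho+f(x_+)$, and picking $x_+^*\in\partial f(x_+)$ with $x=x_++\rho_+x_+^*$, gives exactly \eqref{e:DRstep}, where $\rho_+>0$ is precisely $-\rho<f(p)$. For the ``moreover'' clause, $z_+=(p,\,f(p)+\rho)$ satisfies $z_+\in B\Leftrightarrow\rho\ge 0$; and when $\rho<0$ the inequalities $f(p)>-\rho$ and $f(p)>0$ give $2f(p)>-\rho$, i.e.\ $f(p)+\rho>-f(p)$, so $z_+\notin B'$ (and $z_+\notin B$ since $\rho<0$), while a second application of \ref{c:DRstep_oB'} --- now to $z_+$, which lies outside $B'$ and whose second coordinate $\rho_+$ is positive --- delivers $Tz_+\in B$.

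Items \ref{c:DRstep_BB'} and \ref{c:DRstep_BoB'} are then pure sign bookkeeping built on the previous two. If $z\in B\cap B'$, then $z\in B'$ gives $z_+=(x,0)\in A$ via \ref{c:DRstep_B'}, while $f(x)\le\rho$ (from $z\in B$) and $\rho\le-f(x)$ (from $z\in B'$) force $f(x)\le 0$, so $z_+\in B$ as well. If $z\in B\smallsetminus B'$, then $z\notin B'$ and, combining $f(x)\le\rho$ with $f(x)>-\rho$ (the latter from $z\notin B'$), we get $\rho>0$; hence \ref{c:DRstep_oB'} applies and, its second coordinate being nonnegative, yields $z_+\in B$.

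The one place that needs care is the $\rho<0$ branch of \ref{c:DRstep_oB'}: there one must simultaneously check that $z_+$ leaves both $B$ and $B'$ and that one further DRA step returns to $B$, and all three facts hinge on the strict inequality $f(p)>-\rho$ furnished by Lemma~\ref{l:epi}. The remainder is routine manipulation of the defining inequalities of $B=\epi f$ and of $B'$.
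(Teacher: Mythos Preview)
Your proof is correct and follows essentially the same route as the paper's: you compute $Tz=(0,\rho)+P_B(x,-\rho)$, split on whether $(x,-\rho)\in B$ (i.e., $z\in B'$), invoke Lemma~\ref{l:epi} in the nontrivial branch to obtain \eqref{e:DRstep}, and then deduce \ref{c:DRstep_BB'} and \ref{c:DRstep_BoB'} by sign analysis. The only cosmetic differences are that in the $\rho<0$ subcase you reach $z_+\notin B'$ via $2f(p)>-\rho$ whereas the paper argues $\rho_+>0>-f(x_+)$, and in \ref{c:DRstep_BoB'} you obtain the (slightly sharper) $\rho>0$ rather than the paper's $\rho\geq|f(x)|\geq 0$; both arrive at the same conclusion.
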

\begin{proof}
\ref{c:DRstep_B'}: 
We have $P_Az =(x, 0)$ and $R_Az =(x, -\rho) \in B$. 
Thus $P_BR_Az =P_B(x, -\rho) =(x, -\rho)$, which gives
\begin{equation}
z_+ =(\Id -P_A +P_BR_A)z =(x, \rho) -(x, 0) +(x, -\rho) =(x, 0) \in A.
\end{equation}
If $f(x) \leq 0$, then $z_+ =(x, 0) \in B$, and hence $z_+ \in A\cap B$.
Otherwise, $f(x) >0$ which implies $-f(x) <0$ and further 
$z_+ =(x, 0) \not\in B\cup B'$.

\ref{c:DRstep_oB'}: We have $R_Az =(x, -\rho) \not\in B$, 
and by \eqref{el:epi_proj}, $P_B(x, -\rho) =(p, f(p))$, where
\begin{equation}
p =x -(\rho +f(p))p^* \text{ for some } p^* \in \partial f(p), \quad\text{and}\quad -\rho <f(p) \leq f(x).
\end{equation} 
We obtain
\begin{equation}
z_+ =(x_+, \rho_+) =(\Id -P_A +P_BR_A)z =(x, \rho) -(x, 0) +(p, f(p)) =(p, \rho +f(p)),
\end{equation}
which gives $x_+ =p$ and $\rho_+ =\rho +f(p) >0$, so \eqref{e:DRstep} holds.
If $\rho \geq 0$, then $\rho_+ =\rho +f(x_+) \geq f(x_+)$, and
thus $z_+ \in B$. Otherwise, $\rho <0$, so $\rho_+ <f(x_+)$
and also $f(x_+) >-\rho >0$. Hence $\rho_+ >0 >-f(x_+)$, 
which implies $z_+ \not\in B\cup B'$,
and then $Tz_+ \in B$ because $\rho_+ >0$ and the previous case
applies. 

\ref{c:DRstep_BB'}: 
We have $f(x) \leq \rho \leq -f(x)$, and so $f(x) \leq 0$. 
Now apply \ref{c:DRstep_B'}.

\ref{c:DRstep_BoB'}: We have $\rho \geq f(x)$ and $\rho >-f(x)$. 
Then $\rho \geq |f(x)| \geq 0$, and \ref{c:DRstep_oB'} gives $z_+ \in B$.  
\end{proof}

\begin{theorem}
\label{t:epi}
Suppose that 
$\inf_X f <0$, and, given a 
starting point $z_0 =(x_0, \rho_0) \in X\times \RR$,
generate the DRA sequence $(z_n)_\nnn$ by 
\begin{equation}
(\forall\nnn)\quad z_{n+1} =(x_{n+1}, \rho_{n+1}) =Tz_n.
\end{equation}
Then $(z_n)_\nnn$ converges finitely to a point $z \in A\cap B$.
\end{theorem}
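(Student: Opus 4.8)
The plan is to exploit Corollary~\ref{c:DRstep} to show that, from any starting point, the DRA sequence enters the half-space $B'=R_A(B)$ and that the subsequent dynamics on $A\cap B'$ terminate finitely. First I would dispense with the trivial case: if at some index $z_n\in B\cap B'$, then Corollary~\ref{c:DRstep}\ref{c:DRstep_BB'} gives $z_{n+1}\in A\cap B$, which is a fixed point of $T$ (since then $P_Az_{n+1}=z_{n+1}=P_{B}R_Az_{n+1}$, a routine check), and we are done. So assume $z_n\notin B\cap B'$ for all $n$. Tracking the four cases of Corollary~\ref{c:DRstep}, one checks that the ``$B'$-coordinate'' behaves predictably: the sign condition on $\rho$ governs whether the iterate lands in $B$, and once an iterate lies in $B'$ with $f(x)>0$ it leaves $B\cup B'$ but the next step (via part~\ref{c:DRstep_oB'} with $\rho<0$) re-enters. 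The key structural observation I would isolate is that, because $\inf_X f<0$, the affine section $A\cap B=\{(x,0): f(x)\le 0\}$ has nonempty interior relative to $A$ --- i.e.\ there is $(u,0)\in A$ with $f(u)<0$ --- so Slater's condition holds in the sense needed to drive finiteness.

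Next I would show the $x$-components are eventually monotone in a linear functional, mimicking the argument of Theorem~\ref{t:Bpoly}. By Fact~\ref{f:cvgFixT} (or \cite[Corollary~3.9 and Theorem~3.13]{BCL04}) we already have $z_n\to z\in\Fix T$ and $P_Az_n\to P_Az\in A\cap B$; write $z=(\bar x,0)$ with $f(\bar x)\le 0$. The dichotomy is: either $f(\bar x)<0$, or $f(\bar x)=0$. In the first case $P_Az=(\bar x,0)\in A\cap\inte' B$ where $\inte' B$ is the part of $\epi f$ above points with $f<0$; then $R_Az_n\to(\bar x,0)$ eventually lies in the interior of $B$ relative to the relevant neighborhood, so $P_BR_Az_n=R_Az_n$ for large $n$, forcing $z_{n+1}=P_Az_n\in A\cap B=\Fix T$ --- finite convergence, exactly as in Lemma~\ref{l:cvg}\ref{l:cvg_int}. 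In the second case $f(\bar x)=0$: here I would use the subgradient inequality encoded in \eqref{e:epi_proj}/\eqref{e:DRstep}. From \eqref{e:DRstep}, $x_{n+1}=x_n-\rho_{n+1}x_{n+1}^*$ with $x_{n+1}^*\in\partial f(x_{n+1})$ and $\rho_{n+1}>0$; picking $u$ with $f(u)<0=f(\bar x)$ and testing $\scal{u-x_{n+1}}{x_n-x_{n+1}}=\rho_{n+1}\scal{u-x_{n+1}}{x_{n+1}^*}\le \rho_{n+1}(f(u)-f(x_{n+1}))$ yields, once $f(x_{n+1})$ is close to $0$, a strict negative upper bound; this says the linear functional $x\mapsto\scal{u}{x}$ is \emph{strictly} increasing along $(x_n)$ unless $x_{n+1}$ already equals $\bar x$ in a suitable sense. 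Combining this with the boundedness/convergence $x_n\to\bar x$ produces a contradiction in the spirit of Lemma~\ref{l:plfun} and Lemma~\ref{l:lfun}, unless termination has already occurred.

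The main obstacle I anticipate is the borderline case $f(\bar x)=0$ with $x_n\ne\bar x$ for all $n$: here the limit point is on the boundary of $B$ in the ``bad'' direction, linear convergence is not automatic, and one must extract finiteness purely from the local geometry of $\epi f$ near $(\bar x,0)$ together with the existence of the Slater point $u$. The cleanest route is probably to argue by contradiction: assume the sequence never terminates, so by Corollary~\ref{c:DRstep} infinitely many iterates satisfy $z_n\notin B$, hence (tracing the cases) infinitely many satisfy $\rho_n<0$ and the ``re-entry'' pattern of part~\ref{c:DRstep_oB'}. Along that subsequence the bound $\rho_{n+1}=\rho_n+f(x_{n+1})$ with $f(x_{n+1})>-\rho_n>0$ keeps $f(x_{n+1})$ bounded away from $0$, contradicting $f(x_n)\to f(\bar x)=0$. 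Thus only finitely many $z_n$ lie outside $B$; after that $z_n\in B$ for all large $n$, and then iterating part~\ref{c:DRstep_BoB'} keeps the sequence in $B$ while the shadow $x_n$ stabilizes --- at which point $z_{n+1}=P_Az_n=(x_n,0)\in A\cap B$ forces a fixed point. Filling in the precise index bookkeeping between the four cases, and verifying the elementary claim that $A\cap B$ points are fixed by $T$, are the routine parts; the delicate part is the sign/magnitude tracking of $\rho_n$ that forces eventual membership in $B$.
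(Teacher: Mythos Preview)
Your plan has a genuine gap, and in fact the paper's argument is far more elementary than the route you sketch.

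First, the linear-functional approach ``mimicking Theorem~\ref{t:Bpoly}'' cannot be carried out here. That argument rests on Lemma~\ref{l:lfun}, which in turn needs the orthogonality $\scal{r_n-P_Br_n}{c-P_Br_n}=0$ supplied by Lemma~\ref{l:proj_poly}. The latter requires $B$ to be polyhedral at the limit point; for $B=\epi f$ with a general convex $f$ this fails, and Lemma~\ref{l:epi} only gives the \emph{inequality} in \eqref{e:epi_proj}, not an equality. So the Spingarn-type contradiction via Lemma~\ref{l:plfun}/Lemma~\ref{l:lfun} is simply unavailable.

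Second, your fallback contradiction argument is built on a false implication. You write ``assume the sequence never terminates, so by Corollary~\ref{c:DRstep} infinitely many iterates satisfy $z_n\notin B$''. But Corollary~\ref{c:DRstep}\ref{c:DRstep_BB'}\&\ref{c:DRstep_BoB'} say the opposite: once $z_n\in B$, either $z_{n+1}\in A\cap B$ (termination) or $z_{n+1}\in B$ again. Combined with \ref{c:DRstep_B'}\&\ref{c:DRstep_oB'}, the sequence enters $B$ within at most two steps and then never leaves it. Hence if there is no termination, one has $z_n\in B\smallsetminus B'$ for \emph{all} large $n$, with $\rho_n>0$ throughout --- there is no subsequence with $\rho_n<0$ to exploit, and your ``$f(x_{n+1})>-\rho_n>0$'' bound is vacuous. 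Likewise, your closing line ``the shadow $x_n$ stabilizes --- at which point $z_{n+1}=P_Az_n$'' is unjustified: the identity $z_{n+1}=P_Az_n$ holds only when $z_n\in B'$, precisely the regime you have not reached.

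What actually works is a direct scalar argument on $\rho_n$. After reducing to $z_n\in B\smallsetminus B'$ for all $n$, Corollary~\ref{c:DRstep}\ref{c:DRstep_oB'} gives $f(x_{n+1})\le f(x_n)$ and $\rho_{n+1}=\rho_n+f(x_{n+1})>0$. Since $A\cap\inte B\neq\varnothing$, Lemma~\ref{l:cvg}\ref{l:cvg_gen} forces $\rho_n\to 0$. If $f(x_{n+1})\ge 0$ for all $n$, then $(\rho_n)$ is nondecreasing and positive yet converges to $0$ --- absurd. Hence some $f(x_{n_0+1})<0$; by monotonicity $f(x_n)\le f(x_{n_0+1})<0$ for $n\ge n_0+1$, so $\rho_n\le\rho_{n_0}+(n-n_0)f(x_{n_0+1})\to-\infty$, contradicting $\rho_n>0$. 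That is the whole proof; no dichotomy on $f(\bar x)$, no linear functional, no polyhedral structure.
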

\begin{proof}
In view of
Corollary~\ref{c:DRstep}\ref{c:DRstep_B'}\&\ref{c:DRstep_BB'},
we can and do assume that  $z_0 \in B\smallsetminus B'$,
where $B'$ was defined in \eqref{e:B'}. 
It follows then from 
Corollary~\ref{c:DRstep}\ref{c:DRstep_BB'}\&\ref{c:DRstep_BoB'},
that $(z_n)_\nnn$ lies in $B$.

\emph{Case 1}: $(\exists n \in \NN)$ $z_{n}\in B\cap B'$.\\
By Corollary~\ref{c:DRstep}\ref{c:DRstep_BB'},  
$z_{n+1}\in A\cap B$ and we are done.

\emph{Case 2}:  $(\forall\nnn)$ $z_n \in B\smallsetminus B'$.\\
By Corollary~\ref{c:DRstep}\ref{c:DRstep_oB'},
\begin{equation}
\label{e:xxrr}
(\forall\nnn)\quad f(x_{n+1}) \leq f(x_n)
\;\;\text{and}\;\; \rho_{n+1} =\rho_n +f(x_{n+1}) >0.
\end{equation}
Next, it follows from \cite[Lemma~7.3]{Roc70} that
$\inte B =\menge{(x, \rho) \in X\times \RR}{f(x)<\rho}$.
Because $\inf_X f <0$, we obtain $A\cap \inte B \neq\varnothing$,
which, due to Lemma~\ref{l:cvg}\ref{l:cvg_gen} yields
$z_n \to z =(x, \rho) \in A\cap B$.
Since $z \in A$, we must have $\rho =0$. 
If $(\forall\nnn)$ $f(x_{n+1}) \geq 0$, 
then, by \eqref{e:xxrr}, 
$0<\rho_1\leq\rho_2\leq\cdots\leq\rho_n\to \rho = 0$ which is
absurd. 
Therefore, 
\begin{equation}
(\exi n_0\in\NN)\quad f(x_{n_0+1}) <0.
\end{equation}
In view of \eqref{e:xxrr}, we see that 
$(\forall n \geq n_0+1)$
$\rho_n \leq \rho_{n_0} +(n -n_0)f(x_{n_0+1})$.
Since $f(x_{n_0+1}) <0$, there exists $n_1 \in \NN$, $n_1 \geq n_0 +1$ 
such that
\begin{equation}
\rho_{n_0} +(n_1 -n_0)f(x_{n_0+1}) \leq -f(x_{n_0+1}).
\end{equation} 
Noting that $f(x_{n_1}) \leq f(x_{n_0+1})$, we then obtain 
\begin{equation}
\rho_{n_1} \leq \rho_{n_0} +(n_1 -n_0)f(x_{n_0+1}) \leq
-f(x_{n_0+1}) \leq -f(x_{n_1}).
\end{equation}
Hence $z_{n_1} \in B'$, which contradicts the assumption of 
\emph{Case~2}. Therefore, \emph{Case~2} never occurs and
the proof is complete. 
\end{proof}

We conclude by illustrating that finite convergence may be
deduced from Theorem~\ref{t:epi} but not necessarily from
the finite convergence conditions of Section~\ref{s:Luque}. 

\begin{example}
\label{ex:epi}
Suppose that $X =\RR$, that $A =\RR\times \{0\}$, and that $B =\epi f$, 
where $f \colon \RR \to \RR\colon x\mapsto x^2 -1$.
Let $(\forall\ve\in\RPP)$
$z_\varepsilon = (1 +\varepsilon, -\varepsilon)$.
Then $(\forall\ve\in\RPP)$ $Tz_\varepsilon \notin \Fix T =[-1,1]\times\{0\}$, 
and $z_\varepsilon-Tz_\varepsilon \to 0$ as $\varepsilon \to 0^+$.
Consequently, Luque's condition \eqref{e:Lres} fails.
\end{example}
\begin{proof}
Let $\varepsilon \in \RPP$. Then $-f(1 +\varepsilon) =-2\varepsilon -\varepsilon^2 <-\varepsilon$,
and so $z_\varepsilon =(1 +\varepsilon, -\varepsilon) \notin B'$. 
By Corollary~\ref{c:DRstep}\ref{c:DRstep_oB'}, there exists $x_\varepsilon \in \RR$ such that 
\begin{equation}
Tz_\varepsilon =(x_\varepsilon, -\varepsilon +f(x_\varepsilon)),\quad  
x_\varepsilon =1 +\varepsilon -(-\varepsilon +f(x_\varepsilon))2x_\varepsilon, 
\quad\text{and}\quad -\varepsilon +f(x_\varepsilon) >0.
\end{equation}
The last inequality shows that $Tz_\varepsilon \notin \Fix T =[-1,1]\times\{0\}$.
It follows from the expression of $x_\varepsilon$ that 
\begin{equation}
\label{e:0421a}
2x_\varepsilon^3 -(1 +2\varepsilon)x_\varepsilon -1 -\varepsilon =0.
\end{equation}
Note that 
$(x_\ve,f(x_\ve))+ (0,-\ve) = Tz_\ve = z_\ve -P_Az_\ve  + P_BR_Az_\ve
= (1+\ve,-\ve)-(1+\ve,0)+P_B(1+\ve,\ve) =
P_B(1+\ve,\ve)+(0,-\ve)$ and hence 
$(x_\varepsilon, f(x_\varepsilon)) =P_B(1 +\varepsilon,
\varepsilon)$.
Remark~\ref{r:pin} gives $0 \leq x_\varepsilon \leq 1 +\varepsilon$.
If $0 \leq x_\varepsilon \leq 1$, then \eqref{e:0421a} yields 
$0 \leq 2x_\varepsilon -(1 +2\varepsilon)x_\varepsilon -1 -\varepsilon 
=(x_\varepsilon -1) -2\varepsilon x_\varepsilon -\varepsilon <0$,
which is absurd. 
Thus $1 <x_\varepsilon \leq 1 +\varepsilon$. 
Now as $\varepsilon \to 0^+$, we have $x_\varepsilon \to 1$, $f(x_\varepsilon) \to f(1) =0$, 
and at the same time, $z_\varepsilon-Tz_\varepsilon =(1 +\varepsilon -x_\varepsilon, -f(x_\varepsilon)) \to 0$.
\end{proof}

\small 

\subsection*{Acknowledgments}
HHB was partially supported by the Natural Sciences and
Engineering Research Council of Canada 
and by the Canada Research Chair Program.
MND was partially supported by an NSERC accelerator grant of HHB.

\end{document}